\def\R{\mathbb{R}}
\def\Q{\mathbb{Q}}
\def\Z{\mathbb{Z}}
\def\N{\mathbb{N}}
\def\O{\mathcal{O}}
\def\deg{{\mathrm{deg}}}
\def\ot{\otimes}
\def\shfL{\mathcal{L}}
\def\ovl{\overline}
\def\scrX{\mathcal{X}}
\definecolor{mred}{rgb}{0.83, 0.0, 0.0}
\definecolor{darkspringgreen}{rgb}{0.09, 0.45, 0.27}
\definecolor{ruby}{rgb}{0.88, 0.07, 0.37}
\def\colorsout#1{\bgroup\markoverwith{\textcolor{#1}{\rule[0.5ex]{2pt}{0.7pt}}}\ULon} 
\def\coloruline#1{\bgroup\markoverwith{\textcolor{#1}{\rule[-0.5ex]{2pt}{0.7pt}}}\ULon} 
\theoremstyle{plain}
\numberwithin{equation}{section}
\newcommand\tint{\mathop{\mathpalette\tb@int{t}}\!\int}
\newcommand\bint{\mathop{\mathpalette\tb@int{b}}\!\int}
\newcommand\tb@int[2]{%
  \sbox\z@{$\m@th#1\int$}%
  \if#2t%
    \rlap{\hbox to\wd\z@{%
      \hfil
      \vrule width .35em height \dimexpr\ht\z@+1.4pt\relax depth -\dimexpr\ht\z@+1pt\relax
      \kern.05em 
    }}
  \else
    \rlap{\hbox to\wd\z@{%
      \vrule width .35em height -\dimexpr\dp\z@+1pt\relax depth \dimexpr\dp\z@+1.4pt\relax
      \hfil
    }}
  \fi
}
\newcommand*\suppresschapternumber{%
  \let\@makechapterhead\@makeschapterhead
  \patchcmd{\@chapter}
    {\protect\numberline{\thechapter}}
    {}
    {}{}%
}
\newcommand*\removedotbetweenchapterandsection{%
  \renewcommand\thesection{\thechapter\@arabic\c@section}%
}
\newcommand{\mr}[1]{\mathrm{#1}}
\newcommand{\mc}[1]{\mathcal{#1}}
\long\def\comment#1{}
\newtheorem{thm}{Theorem}[section] 
\newtheorem{prop}[thm]{Proposition}
\newtheorem{lem}[thm]{Lemma}
\newtheorem{cor}[thm]{Corollary}
\newenvironment{customthm}[1]
  {\innercustomthm}
  {\endinnercustomthm}
\newtheorem{thmA}{Theorem}
\theoremstyle{definition} 
\newtheorem{definition}[thm]{Definition} 
\newtheorem{example}[thm]{Example}
\theoremstyle{remark} 
\begin{document}

\title{Arakelov geometry on flag varieties over function fields and related topics}
\author{Yangyu Fan}
\address{Key Laboratory of Algebraic Lie Theory and Analysis of Ministry of Education, School of Mathematics and Statistics, Beijing Institute of Technology, Beijing, 100081, People's Republic of China}
\email{yangyu.fan@bit.edu.cn}

\author{Wenbin Luo}
\address{Beijing International Center for Mathematical Research, Peking University, Beijing 100871, China}
\email{w.luo@bicmr.pku.edu.cn}

\author{Binggang Qu}
\address{Beijing International Center for Mathematical Research, Peking University, Beijing 100871, China}
\email{qubinggang22@bicmr.pku.edu.cn}

\begin{abstract}
Let $k$ be an algebraically closed field of characteristic zero. Let $G$ be a connected reductive group over $k$, $P \subseteq G$ be a parabolic subgroup and $\lambda: P \longrightarrow G$ be a strictly anti-dominant character.  Let $C$ be a projective smooth curve over $k$ with function field $K=k(C)$ and   $F$ be a principal $G$-bundle on $C$. Then  $F/P \longrightarrow C$ is a flag bundle and $\mathcal{L}_\lambda=F \times_P k_\lambda$ on $F/P$ is a relatively ample line bundle.

We compute the height filtration, successive minima, and the Boucksom-Chen concave transform of the height function $h_{\mathcal{L}_\lambda}: X(\overline{K}) \longrightarrow \mathbb{R}$  over the flag variety $X=(F/P)_K$. An interesting application is that the height of $X$ equals to a weighted average of successive minima, and one may view this as a refinement of Zhang's inequality of successive minima.

Let $f \in N^1(F/P)$ be the numerical class of a vertical fiber. We compute the augmented base loci $\mathrm{B}_+(\mathcal{L}_\lambda-tf)$ for any $t \in \mathbb{R}$, and it turns out that they are almost the same as the height filtration. As a corollary, we compute the $k$-th movable cones of flag bundles over curves for all $k$.
\end{abstract}

\maketitle
\setcounter{tocdepth}{1}
\tableofcontents

\section{Introduction}
\subsection{Height filtration and successive minima}
We start by giving a reminder on height functions in Arakelov theory. Let $K$ be either a number field or $K=k(C)$ where $C$ is a projective smooth curve over a field $k$. Let $X$ be a projective variety of dimension $d$ over $K$ and $\overline{L}$ be an adelic line bundle on $X$. These data induce an Arakelov height function $h_{\overline{L}}$ on $X$ (see \cite[\textsection 9]{Yuan_iccm2010} for a survey). A typical case is the \textit{geometric height}, which is the one we concern in this article. Here we give the definition.

If $K=k(C)$, consider a projective flat morphism $\mathcal{X} \longrightarrow C$ with the generic fiber $X \longrightarrow \operatorname{Spec}(K)$ and 
 a line bundle $\mathcal{L}$   on $\mathcal{X}$ with $\mathcal{L}_K \simeq L$. The data $(\mathcal{X},\mathcal{L})$ define an adelic line bundle $\ovl L$ and the height function $h_{\ovl L}$ is given by
        \begin{flalign*}
            \quad \quad \quad  &
                h_{\overline{L}}: X(\overline{K}) \longrightarrow \mathbb{Q}, \; x \longmapsto \frac{\mathcal{L} \cdot \overline{\{x\}}}{\deg(x)} \;\; \text{where $\overline{\{x\}}$ is the closure of $x$ in $\mathcal{X}$}.
            &&
        \end{flalign*}
We also denote this by $h_\shfL$ if there is no ambiguity. If $K$ is a number field, the height function can be defined similarly via arithmetic intersection theory.

For any $t \in \mathbb{R}$, let $Z_t \subseteq X$ be the Zariski closure of the set $\big\{ x \in X(\overline{K}): h_{\overline{L}}(x) < t \big\}$. Let's call $\big\{ Z_t: t \in \mathbb{R} \big\}$ the \emph{height filtration}, and call its jumping points the \emph{successive minima}. Note that our definition of successive minima are slightly different with Zhang \cite{Zhang_smallPoints}. Zhang considers only dimension jumps $e_i=\inf\big\{ t: \dim Z_t \geq d-i+1 \big\}$, $i=1,\dots,d+1$. In this article, $e_i$ will be called the \emph{successive minima of Zhang} to avoid ambiguity.

Determining the height filtration is generally difficult and only two cases are known in the literature:
\begin{itemize}
    \item The Néron-Tate height function on abelian varieties $A$ over $K$. Let $L$ be an ample symmetric line bundle over $A$ endowed with the unique adelic metric such that $[2]^*\overline{L}=2^{\dim A} \overline{L}$.  The height filtration of the induced Néron-Tate height function $\widehat{h}_{L}$ is $A \supsetneq \emptyset$ and the jumping point is $\zeta=0$;
    \item The toric height function on toric varieties \cite{burgos_gil_successive_2015}. They  are given by successively deleting torus orbits. An important example is the projective space $\mathbb{P}^n_\mathbb{Q}$ and the height function induced by the universal bundle $\O_{\mathbb{P}^n_\mathbb{Z}}(1)$ equipped with the Fubini-Study metric $ \|\cdot\|_\text{FS}$. The height filtration on $\mathbb{P}^n_\mathbb{Q}$  is given by \begin{flalign*}
   \quad\quad\quad  \mathbb{P}^n_\mathbb{Q} \supsetneq \bigcup_i \big\{ x_i=0 \big\} \supsetneq \bigcup_{i,j} \big\{ x_i=0,x_j=0 \big\} \supsetneq \cdots \supsetneq \emptyset
    \end{flalign*}
    with successive minima $\frac12 \log(k+1)$, $k=0,1,\dots,n$.
\end{itemize}

 Our first result (Theorem \ref{intro_ thm of ht filtration} below) provides a new case where height filtrations can be explicitly computed. Let $k$ be an algebraically closed field of characteristic zero and $C$ be a projective smooth curve over $k$ with function field $K=k(C)$.
Let $G$ be a connected reductive group over $k$, $P \subseteq G$ be a parabolic subgroup and $\lambda: P \longrightarrow \mathbb{G}_m$ be a strictly antidominant character. Let $F$ be a principal $G$-bundle over $C$ and $\mathcal{X}=F/P$ with generic fiber $X=(F/P)_K$. Let $\mathcal{L}_\lambda = F \times_P k_\lambda$ and $h_{\mathcal{L}_\lambda}: X(\overline{K}) \longrightarrow \mathbb{R}$ be the induced height function.

\[ \xymatrix{
X=(F/P)_K \ar[d] \ar[r] & \mathcal{X}=F/P \ar[d] \\
\operatorname{Spek}(K) \ar[r] & C
} \]

Let $F_Q$ be the canonical reduction of $F$ to $Q$, and $\deg(F_Q) \in X(T)^\vee_\mathbb{Q}$ be the induced cocharacter. Let $W,W_P$ and $W_Q$ be the Weyl group of $G,L_P$ and $L_Q$. For $w \in W_Q \backslash W/W_P$, let $C_w=(F_Q \times_Q QwP/P)_K \subseteq X$ be the corresponding Schubert cell. The numbers $\langle \deg (F_{Q }), w\lambda \rangle$ are well-defined. 
\begin{thmA}[Theorem \ref{thm of ht filtration}] For any $t \in \mathbb{R}$, let $Z_t \subseteq X$ be the Zariski closure of the set $\big\{ x \in X(\overline{K}): h_{\mathcal{L}_\lambda}(x) < t \big\}$. Then
	\begin{flalign*}
    \quad \quad \quad &
    Z_t = X \Bigg\backslash \coprod_{\langle \deg (F_{Q }), w\lambda \rangle\geq t} C_w = \coprod_{\langle \deg (F_{Q }), w\lambda \rangle<t} C_w.
    &&
  \end{flalign*} \label{intro_ thm of ht filtration}
\end{thmA}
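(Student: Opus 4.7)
The plan is to match each Schubert cell $C_w$ to a specific height value using the canonical reduction, so that the height filtration can be read off from the stratification into cells. The starting point is the Bruhat decomposition $G/P = \coprod_{w \in W_Q \bs W/W_P} QwP/P$; twisting by $F_Q$ gives a stratification $F/P = \coprod_w F_Q \times_Q QwP/P$, whose generic fiber is $X = \coprod_w C_w$. It therefore suffices to establish three facts: (i) some $\ov K$-point of $C_w$ has height equal to $\langle \deg F_Q, w\lambda \rangle$; (ii) every $\ov K$-point of $C_w$ has height at least $\langle \deg F_Q, w\lambda \rangle$; (iii) the union $\bigcup_{\langle \deg F_Q, w\lambda \rangle \geq t} C_w$ is Zariski closed in $X$.

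For (i) I would exhibit, in each stratum $F_Q \times_Q QwP/P$, the section $\sigma_w: C \to F/P$ coming from the $L_Q$-fixed point $wP \in QwP/P$; pulling $\mathcal{L}_\lambda$ back along $\sigma_w$ produces the associated line bundle $F_Q \times_Q k_{w\lambda}$ on $C$, whose degree is tautologically $\langle \deg F_Q, w\lambda \rangle$ by definition of the degree pairing. This places a $\ov K$-point of height exactly $\langle \deg F_Q, w\lambda \rangle$ inside $C_w$, hence $C_w \subseteq Z_t$ whenever $\langle \deg F_Q, w\lambda \rangle < t$.

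The reverse inequality in (ii) is the heart of the proof. After base change to a finite cover of $C$, any $\ov K$-point of $C_w$ extends to a section $\sigma: C \to F_Q \times_Q QwP/P$. Fibering $QwP/P$ over the Levi orbit $L_Q \cdot wP \simeq L_Q/(L_Q \cap wPw^{-1})$ via an affine unipotent fibration, $\sigma$ decomposes into an $L_Q$-component contributing exactly $\langle \deg F_Q, w\lambda \rangle$ to the height, and a unipotent component contributing an additional sum of degrees of auxiliary line bundles on $C$, one per root in the relevant unipotent radical. The main obstacle is showing that each such correction is non-negative: this is precisely where the semistability of the Levi-associated bundle $F_Q \times_Q L_Q$ --- the defining property of the canonical reduction --- intervenes, with the sign fixed by the strict antidominance of $\lambda$, so that each correction is a degree of a quotient of a semistable bundle by a sub-object of smaller slope.

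Finally, (iii) holds because the same strict antidominance makes $w \mapsto \langle \deg F_Q, w\lambda \rangle$ monotone in the Bruhat order on $W_Q\bs W/W_P$ controlling Schubert closures, so the sub-level sets cut out unions of Schubert cells that are closed in $X$. Combining (i)--(iii), one concludes $Z_t = \coprod_{\langle \deg F_Q, w\lambda \rangle < t} C_w$, as claimed.
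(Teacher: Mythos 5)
The decisive gap is in your step (i). Knowing that a single $\overline{K}$-point of $C_w$ has height $\langle\deg F_Q,w\lambda\rangle<t$ only puts that one point into $Z_t$; it does not give $C_w\subseteq Z_t$, because $Z_t$ is the closure of the set of \emph{all} points of height $<t$, and a priori that set could meet $C_w$ in a single point or a proper closed subset. What is actually needed is that for every $t>\langle\deg F_Q,w\lambda\rangle$ the points of height $<t$ are Zariski dense in the Schubert variety $X_w=\overline{C_w}$, i.e.\ that the \emph{essential minimum} of $h_{\mathcal{L}_\lambda}$ on $X_w$ equals $\langle\deg F_Q,w\lambda\rangle$. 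This is the paper's Proposition \ref{ess min on Schubert varieties}, proved by combining Balla\"y's theorem $\zeta_{\operatorname{ess}}(X_w)=\lim_n\mu_{\max}(\pi_*\mathcal{L}_{n\lambda}|_{\mathcal{X}_w})/n$ with Schieder's computation of the Harder--Narasimhan filtration of $F_Q\times_Q H^0(\overline{QwP}/P,M_{n\lambda})$. Nothing in your proposal supplies this density statement, and it is the heart of the theorem.

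A secondary problem is that the section $\sigma_w$ in (i) is not well defined: the point $wP\in QwP/P$ is fixed by $T$ but in general not by $L_Q$ (its $L_Q$-orbit is the flag variety $L_Q/(L_Q\cap wPw^{-1})$, usually of positive dimension), let alone by $Q$, so $F_Q\times_Q\{wP\}$ is not a section of $F_Q\times_Q QwP/P\to C$; producing even one point of $C_w$ of minimal height would require a further reduction of $F_Q$ to a Borel of $Q$, whose degree is not controlled by $\deg(F_Q)$ alone. Your steps (ii) and (iii) are essentially sound and track the paper: (ii) is Proposition \ref{height lower bound on Schubert cells}, which the paper obtains from Schieder's inequality $\langle\deg(F_P),\lambda\rangle\geq\langle\deg(F_Q),w\lambda\rangle$ for a reduction in relative position $w$ (your Levi/unipotent decomposition is the idea behind that inequality, though it is the semistability of $F_Q\times_Q L_Q$ that controls the Levi component and the positivity $\langle\deg(F_Q),\alpha\rangle>0$ for $\alpha\notin\Delta_Q$ that controls the corrections, rather than the other way around), and the Bruhat monotonicity in (iii) is what drives the paper's induction. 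But without the essential-minimum computation replacing your step (i), the argument does not close.
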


In particular, the filtration $\big\{ Z_t:t \in \mathbb{R} \big\}$ is  finite (while in general height filtrations could be infinite) and is given by successively deleting Schubert cells $C_w$ with jumping points $\zeta_w=\langle \deg (F_{Q }), w\lambda \rangle$, $w \in W_Q \backslash W/W_P$.

\subsection{String polytopes and the Boucksom-Chen concave transform}
 Attached to a projective variety $X$ over $K$ and  an adelic line bundle $\overline{L}$ on $X$ with $L$ ample and $\overline{L}$ semipositive, we have the \emph{Okounkov body} $\Delta$ and the Boucksom-Chen concave transform $\varphi : \Delta \longrightarrow \mathbb{R}$ thanks to \cite{Kaveh-Khovanskii,Lazarsfeld_convex_bodies, BouChen11,Ballay21}. The maximum of $\varphi$ equals to the essential minimum of $h_{\overline{L}}$, the minimum of $\varphi$ equals to the absolute minimum of $h_{\overline{L}}$ and the integral mean of $\varphi$ equals to the height of $X$.

In toric cases, they can be expressed as summation of local \emph{roof functions} \cite{Burgos_book_2014}. But in general, the Boucksom-Chen concave transform is difficult to compute. Our second result (Theorem \ref{intro_ concave transform}) provides a new case where the Boucksom-Chen concave transform can be explicitly computed.

To state the result, keep the notations in Theorem \ref{intro_ thm of ht filtration}. Let $w_0 \in W$ be the longest element and fix a reduced decomposition $\underline{w_0}= s_{i_1} s_{i_2} \cdots s_{i_N}$, $N=\dim G/B$. Then we have the string valuation $\nu$ and the string polytope $\Delta_\lambda \subseteq \mathbb{R}^N$ \cite{Littelmann_1998,Bernstein-Zelevinsky}, which is a polytope of dimension $d=\dim G/P$.
Let $\mathcal{P}_\lambda$ be the weight polytope and $p: \Delta_\lambda \longrightarrow \mathcal{P}_\lambda$ be the weight map. The Okounkov body of $(X,L)$ (with respect to a specific valuation) is the string polytope $\Delta_\lambda$.
\begin{thmA}[Theorem \ref{concave transform}] \label{intro_ concave transform}
	The Boucksom-Chen concave transform of ${\mathcal{L}_\lambda}$ is $\Delta_\lambda \longrightarrow \mathbb{R}, x \longmapsto \langle \deg(F_Q),p(x) \rangle$.
\end{thmA}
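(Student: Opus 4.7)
The plan is to (i) compute the Harder--Narasimhan filtration of $\pi_{*}\mathcal{L}_{\lambda}^{\otimes n}$ on $C$ via the canonical reduction $F_Q$, (ii) match its jumps with the weight decomposition of the string basis of $H^{0}(X,\mathcal{L}_{\lambda}^{\otimes n})$, and (iii) compare superlevel-set volumes to extract the Boucksom--Chen concave transform.

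For (i), Borel--Weil gives $\pi_{*}\mathcal{L}_{\lambda}^{\otimes n}=F\times_{G}V_{n\lambda}^{*}\simeq F_{Q}\times_{Q}V_{n\lambda}^{*}$. As a $Q$-representation, $V_{n\lambda}^{*}$ carries a filtration indexed by characters $\mu$ of $Z(L_{Q})^{\circ}$ whose graded pieces $(V_{n\lambda}^{*})_{\mu}$ are completely reducible $L_{Q}$-modules of central character $\mu$. Since $F_{Q}\times_{Q}L_{Q}$ is semistable (defining property of the canonical reduction), the induced bundle $F_{Q}\times_{Q}(V_{n\lambda}^{*})_{\mu}$ is semistable of slope $\langle \deg(F_{Q}),\mu\rangle$, and regrouping the weights by this pairing realises the HN filtration of $\pi_{*}\mathcal{L}_{\lambda}^{\otimes n}$.

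For (ii), the string valuation yields a basis $\{e_{\mathbf{x}}\}_{\mathbf{x}\in n\Delta_{\lambda}\cap\mathbb{Z}^{N}}$ of $V_{n\lambda}^{*}$ in which $e_{\mathbf{x}}$ is a $T$-weight vector of weight $n\cdot p(\mathbf{x}/n)$, by the defining property of the weight map $p$. Combined with (i), for each $t\in\mathbb{R}$ the dimension of the HN piece of $H^{0}(C,\pi_{*}\mathcal{L}_{\lambda}^{\otimes n})$ of slopes $\geq nt$ equals
\begin{equation*}
\#\bigl\{\mathbf{x}\in n\Delta_{\lambda}\cap\mathbb{Z}^{N}:\langle \deg(F_{Q}),p(\mathbf{x}/n)\rangle\geq t\bigr\}.
\end{equation*}
For (iii), in the function-field case this HN filtration is precisely the one entering the Boucksom--Chen construction, and the concave transform $\varphi$ is characterised by the superlevel-set identity $\mathrm{vol}(\{\varphi\geq t\})=\lim_{n\to\infty}n^{-d}\dim H^{0}(C,(\pi_{*}\mathcal{L}_{\lambda}^{\otimes n})_{\geq nt})$. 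Passing to the limit $n\to\infty$ in the lattice-point count above gives $\mathrm{vol}\{x\in\Delta_{\lambda}:\langle \deg(F_{Q}),p(x)\rangle\geq t\}$; since the weight map $p:\Delta_\lambda\to\mathcal{P}_\lambda$ is affine, the candidate function $x\mapsto\langle \deg(F_{Q}),p(x)\rangle$ is affine and hence concave on $\Delta_\lambda$, so matching superlevel-set volumes for every $t$ forces it to coincide with $\varphi$.

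The main obstacle is step (i): identifying the $Z(L_{Q})^{\circ}$-weight filtration of $V_{n\lambda}^{*}$ with the HN filtration of the push-forward and computing its slopes as $\langle \deg(F_{Q}),\mu\rangle$. This rests on the defining property of the canonical reduction --- semistability of the Levi quotient together with strict dominance of the slope vector --- together with the strict antidominance of $\lambda$; it is essentially the slope computation already underlying Theorem A. Once it is settled, the remaining steps are combinatorial, invoking only the weight-vector property of the string basis and the standard Kaveh--Khovanskii / Boucksom--Chen formalism.
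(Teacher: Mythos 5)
Your steps (i) and (ii) are exactly the paper's route: (i) is Proposition \ref{thm_maxslop_schubert} (Schieder's identification of the Harder--Narasimhan filtration of $F_Q\times_Q H^0(G/P,nM_\lambda)$ with the weight filtration $\bigoplus_{\langle \deg(F_Q),\mu\rangle\geq t}H^0(G/P,nM_\lambda)[\mu]$, resting on semistability of the Levi bundle), and (ii) is the compatibility of the dual crystal basis with the weight decomposition. The genuine gap is in step (iii). The Boucksom--Chen concave transform is defined pointwise by $\varphi(x)=\sup\{t:\ x\in\Delta_\nu(V_\bullet^t)\}$; it is \emph{not} characterised by the volumes of its superlevel sets. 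Two distinct concave functions on $\Delta_\lambda$ can have equimeasurable superlevel sets for every $t$ (already $x_1$ versus $1-x_1$ on a cube), so ``matching superlevel-set volumes for every $t$'' does not force your affine candidate to coincide with $\varphi$. What you actually need --- and what your step (ii) already essentially supplies --- is the identification of the \emph{sets} $\Delta_\nu(V_\bullet^t)$: since the string basis is adapted to the valuation $\nu$ (distinct values, one-dimensional leaves), the value set $\nu(V_n^{nt}\setminus\{0\})$ is precisely $\{\mathbf{x}\in n\Delta_\lambda\cap\mathbb{Z}^N:\ \langle\deg(F_Q),p(\mathbf{x}/n)\rangle\geq t\}$, whence $\Delta_\nu(V_\bullet^t)=\{x\in\Delta_\lambda:\langle\deg(F_Q),p(x)\rangle\geq t\}$ as a set and the definition of $\varphi$ concludes immediately. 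Replace the volume comparison by this set-level identity and the argument closes, coinciding with the paper's proof.

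Two smaller points. First, the quantity entering the Okounkov-body formalism is the rank of the Harder--Narasimhan piece $(\pi_*\mathcal{L}_{n\lambda})_{\geq nt}$ (equivalently $\dim_K$ of its generic fibre), not $\dim H^0(C,(\pi_*\mathcal{L}_\lambda^{\otimes n})_{\geq nt})$; the latter grows like $n^{d+1}$ by Riemann--Roch and $n^{-d}\dim H^0(C,\cdot)$ would diverge. Second, to speak of ``the'' concave transform of $\mathcal{L}_\lambda$ one must transport the string valuation to $H^0(X,nL)$; the paper does this by fixing a trivialisation $F_{Q,K}\simeq Q_K$, and your argument should record the analogous choice.
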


Let $\mu$ be the Lebesgue measure on the real span of $\Delta_\lambda$, normalized with respect to $\mathbb{Z}^N$. As we mentioned above, the variety height is given by the integral mean of the Boucksom-Chen concave transform \cite{BouChen11}. Thus a direct corollary is
\begin{flalign*}
	\quad \quad \quad &
	h_{\mathcal{L}_\lambda}(X) = \frac{ \mathcal{L}_\lambda^{\dim \mathcal{X}} } {\dim \mathcal{X} \cdot L^{\dim X}} = \frac1{\operatorname{Vol}(\Delta_\lambda)} \int_{\Delta_\lambda} \langle \deg(F_Q),p(x) \rangle \, \mathrm{d}\mu(x).
	&&
\end{flalign*}

The integration can be computed via Weyl symmetry.
\begin{thmA}[Theorem \ref{Zhang type equality}] \label{intro_ Zhang type equality}
    Let $q: W/W_P \longrightarrow W_Q \backslash W/W_P$ be the canoniacl projection. For $w \in W_Q \backslash W/W_P$, let $m_w=\# q^{-1}(w)$. Then 
   \begin{flalign*}
       \quad \quad \quad &
        h_{\mathcal{L}_\lambda} (X) = \frac{1}{\#W/W_P} \sum_{w\in W_Q \backslash W/W_P} m_w \langle \deg(F_Q),w\lambda \rangle .
       &&
   \end{flalign*}
\end{thmA}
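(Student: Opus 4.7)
The plan is to evaluate the integral formula
$$
h_{\mathcal{L}_\lambda}(X) = \frac{1}{\operatorname{Vol}(\Delta_\lambda)}\int_{\Delta_\lambda}\langle \deg(F_Q), p(x)\rangle\, \mathrm{d}\mu(x)
$$
displayed immediately before the statement (a direct consequence of Theorem~B and the Boucksom--Chen integration formula). Pushing $\mu$ forward along the weight map $p\colon \Delta_\lambda \to \mathcal{P}_\lambda$ turns this into
$$
h_{\mathcal{L}_\lambda}(X) = \langle \deg(F_Q),\, b\rangle, \qquad b := \frac{1}{\operatorname{Vol}(\Delta_\lambda)}\int_{\mathcal{P}_\lambda} y\,\mathrm{d}(p_*\mu)(y),
$$
so it suffices to identify the centroid $b$ of the probability measure $p_*\mu/\operatorname{Vol}(\Delta_\lambda)$ on the weight polytope.

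The crux is the $W$-invariance of $p_*\mu$. Up to normalization this is the Duistermaat--Heckman measure of $(G/P, c_1(\mathcal{L}_\lambda))$, equivalently the asymptotic distribution of weights in the $G$-representations $H^0(G/P, \mathcal{L}_\lambda^{\otimes n})$ as $n \to \infty$, whose $W$-invariance follows from the Weyl character formula. In the string-polytope framework this is a known theorem (Kaveh and others) which can be quoted rather than reproved; this is the only step of the argument that relies on nontrivial external input, and so constitutes the main obstacle.

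Granted $W$-invariance, the centroid is pinned down quickly. Let $\pi_W := \tfrac{1}{\#W}\sum_{w\in W}w$ be the Weyl-averaging projector on $X(T)_{\mathbb{R}}$. Since $\pi_W$ is linear it is constant on $\mathcal{P}_\lambda = \operatorname{conv}(W\lambda)$, with constant value $\pi_W(\lambda)$. For the $W$-invariant probability measure $\nu := p_*\mu/\operatorname{Vol}(\Delta_\lambda)$, the change of variables $y \mapsto wy$ shows that $\int y\,\mathrm{d}\nu$ is $W$-fixed, and commuting $\pi_W$ with the integral gives
$$
b = \pi_W\!\left(\int_{\mathcal{P}_\lambda} y\,\mathrm{d}\nu(y)\right) = \int_{\mathcal{P}_\lambda}\pi_W(y)\,\mathrm{d}\nu(y) = \pi_W(\lambda).
$$

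The remainder is bookkeeping. Since $W_P$ fixes $\lambda$, averaging first over right $W_P$-cosets gives $\pi_W(\lambda) = \frac{1}{\#W/W_P}\sum_{w\in W/W_P}w\lambda$. Since $\deg(F_Q)$ lies in the cocharacter lattice of the center of $L_Q$ and is therefore $W_Q$-invariant, the pairing $\langle \deg(F_Q), w\lambda\rangle$ depends only on the double coset $W_QwW_P$, and summing within a double coset contributes the multiplicity $m_w$. Substituting into $h_{\mathcal{L}_\lambda}(X) = \langle \deg(F_Q), \pi_W(\lambda)\rangle$ yields the stated identity.
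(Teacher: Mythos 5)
Your proposal is correct and follows essentially the same route as the paper: the Boucksom--Chen integration formula, the $W$-invariance of $p_*\mu$ (Lemma \ref{Weyl invariance}, which the paper proves in a few lines via the $W$-invariance of the weight multiplicities of $H^0(G/P,nM_\lambda)$ --- the same reason you cite when outsourcing this step), and the constancy of the Weyl-averaging map $\mu \longmapsto \sum_{w\in W} w\mu$ on $\mathcal{P}_\lambda$ (Lemma \ref{Weyl average constancy}). Your centroid-and-projector packaging avoids the paper's decomposition of $\mathcal{P}_\lambda$ into chamber translates of $\mathcal{P}_{\lambda,+}$, and your proof of the constancy lemma (linearity plus the identity on the vertices $w\lambda$) is a mild simplification of the paper's central-character argument, but the underlying symmetry argument is identical.
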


Note that $\zeta_w=\langle \deg(F_Q),w\lambda \rangle$ in Theorem \ref{intro_ Zhang type equality} are successive minima of the height function $h_{\mathcal{L}_\lambda}$ on $X$. Thus Theorem \ref{intro_ Zhang type equality} builds an equality between the variety height and a weighted average of the successve mimina. One may recognize this as a refinement of Zhang's inequality of  successive minima \cite{Zhang_smallPoints} 
\begin{flalign*}
    \quad \quad \quad &
    e_1 \geq h_{\mathcal{L}_\lambda}(X) \geq \frac1{d+1} \sum_{i=1}^{d+1} e_i.
    &&
\end{flalign*}
into an equality. Here $e_i$ is the the successive minima of Zhang. 

We remark that it is not possible to build such an equality in general. Consider the height function on $\mathbb{P}^n_\mathbb{Q}$ induced by the  Hermitian line bundle $\big(\mathcal{O}_{\mathbb P^n_\Z}(1),\|\cdot\|_\text{FS} \big)$. The variety height is a rational number $\frac12 \sum_{i=1}^n \sum_{j=1}^i \frac1j$ while the successive minima are irrational numbers $\tfrac12 \log (k+1)$, $k=0,1,\dots,n$.

\subsection{Base loci and movable cones}
It is a general philosophy in Arakelov geometry that the positivity of $\overline{L}$ is equivalent to the positivity of the induced height function $h_{\overline{L}}$. The study of essential minimum and pseudo-effectivity \cite{burgos_gil_successive_2015,Ballay21,qu_arithmetic_2023} provides an evidence. 

Our third result (Theorem \ref{intro_ prop_aug_bas} below) provides a new evidence to support the above mentioned philosophy. Keep the notations in Theorem \ref{intro_ thm of ht filtration}. Let $f \in N^1(F/P)$ be the class of a vertical fiber and $\mathrm{B}_+(\shfL_\lambda-tf)$ be the augmented base locus of $\shfL_\lambda-tf$ for $t \in \mathbb{R}$. They measure the positivity of the line bundle $\mathcal{L}_\lambda$. 

\begin{thmA}[Theorem \ref{prop_aug_bas}] \label{intro_ prop_aug_bas}
\begin{flalign*}
    \quad \quad \quad &
\mr B_+(\shfL_\lambda-tf)=\scrX \Bigg \backslash \coprod_{\langle \deg(F_Q),w\lambda \rangle > t} \mc C_w = \coprod_{\langle \deg(F_Q),w\lambda \rangle \leq t} \mc C_w.
    &&
\end{flalign*}
\end{thmA}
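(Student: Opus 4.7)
The plan is to combine Theorem A---which gives the explicit height filtration of $h_{\mathcal{L}_\lambda}$---with the standard dictionary between positivity of a line bundle and positivity of its induced height. Writing $\mathcal{M}_t := \mathcal{L}_\lambda - t f$, the identity $f \cdot \overline{\{x\}} = \deg(x)$ on each closed point $x$ of the generic fibre yields $h_{\mathcal{M}_t} = h_{\mathcal{L}_\lambda} - t$, so Theorem A identifies the loci $\{h_{\mathcal{M}_t} < 0\}$ and $\{h_{\mathcal{M}_t} \leq 0\}$ with unions of Schubert cells $C_w$ indexed by $\zeta_w < t$ and $\zeta_w \leq t$ respectively. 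These loci will drive both inclusions.

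For $\coprod_{\zeta_w \leq t} \mathcal{C}_w \subseteq \mathrm{B}_+(\mathcal{M}_t)$, I would argue via the defining intersection $\mathrm{B}_+(\mathcal{M}_t) = \bigcap_{\mathcal{M}_t = A + E} \mathrm{Supp}(E)$ over all decompositions with $A$ an ample $\mathbb{R}$-divisor and $E$ effective. For any such decomposition, $h_A$ is bounded below by a strictly positive constant off a proper closed set (ampleness) and $h_E \geq 0$ off $\mathrm{Supp}(E)$; combined with Theorem A applied at $t + \epsilon$ (and $\epsilon \to 0^+$), this forces $\mathrm{Supp}(E)$ to contain every $\mathcal{C}_w$ with $\zeta_w \leq t$.

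For the reverse inclusion, I would produce for each $t$ an explicit decomposition $\mathcal{M}_t = A_t + E_t$ with $A_t$ ample and $\mathrm{Supp}(E_t) \subseteq \coprod_{\zeta_w \leq t} \mathcal{C}_w$. The natural candidates for $E_t$ are $\mathbb{R}$-linear combinations of codimension-one Schubert bundles lying in that union (possibly together with a multiple of $f$), and the existence question then reduces to an identity in $N^1(\mathcal{X})$ expressing $\mathcal{L}_\lambda$ in the basis of Schubert divisor classes with coefficients controlled by the $\zeta_w$'s, plus a horizontal contribution coming from the canonical reduction $F_Q$.

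The main obstacle is producing this expansion cleanly and verifying ampleness of the residual class $A_t$. I would first pull back to $F/B$ via the projection $F/B \to F/P$, where Schubert divisors are indexed by simple reflections and the Picard group has a standard basis of fundamental weights along fibres; the horizontal piece supplied by $F_Q$ is then computable by intersection against the section $C \to F/Q$ induced by $F_Q$. Ampleness of $A_t$ on vertical curves follows from strict antidominance of $\lambda$, and on horizontal curves follows from Theorem A applied to a slight perturbation of $t$. A cleaner alternative would be to appeal to Nakamaye-type restricted-volume criteria: for each irreducible $V \subseteq \mathcal{X}$ one shows that $V$ meets some $\mathcal{C}_w$ with $\zeta_w > t$ if and only if $\mathrm{vol}_{\mathcal{X}|V}(\mathcal{M}_t) > 0$, which again reduces to intersection numbers on Schubert varieties expressed in terms of the $\zeta_w$'s.
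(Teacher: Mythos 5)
Your reduction $h_{\mathcal{L}_\lambda-tf}=h_{\mathcal{L}_\lambda}-t$ and your first inclusion are sound. The argument that every decomposition $\mathcal{L}_\lambda-tf\equiv A+E$ with $A$ ample and $E$ effective must have $\mathcal{X}_w\subseteq\operatorname{Supp}(E)$ whenever $\zeta_w\leq t$ does work: off $\operatorname{Supp}(E)$ one has $h_{\mathcal{M}_t}\geq h_A$, the essential minimum of $h_A$ on $X_w$ is some $\epsilon>0$ by ampleness, while points of $X_w$ of height $<\zeta_w+\epsilon\leq t+\epsilon$ are Zariski dense in $X_w$; these points cannot all lie outside $\operatorname{Supp}(E)$, forcing $X_w\subseteq\operatorname{Supp}(E)$ and hence $\mathcal{X}_w\subseteq\mathrm{B}_+(\mathcal{M}_t)$. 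This is a genuinely different route from the paper, which instead shows $\operatorname{Vol}(\mathcal{X}|\mathcal{X}_w,\mathcal{L}_\lambda-tf)=0$ via the asymptotics of $\mu_{\max}(\pi_*(m(\mathcal{L}_\lambda-tf))|_{\mathcal{X}_w})$ and a volume comparison, then invokes the restricted-volume characterization of $\mathrm{B}_+$; your version trades that machinery for a direct height-theoretic count, which is a legitimate gain.

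The reverse inclusion is where the proposal breaks. You propose a single decomposition $\mathcal{M}_t=A_t+E_t$ with $A_t$ ample and $\operatorname{Supp}(E_t)\subseteq\coprod_{\zeta_w\leq t}\mathcal{C}_w$. This is impossible whenever that union is nonempty of codimension $\geq 2$, which is the typical case and is exactly what makes $\operatorname{Mov}^k(F/P)$ for $k\geq 2$ interesting: $E_t$ is an effective $\mathbb{R}$-divisor, so $\operatorname{Supp}(E_t)$ is empty or of pure codimension one, and there are no codimension-one Schubert bundles inside a union of codimension $\geq 2$ (while $E_t=0$ would force $\mathcal{M}_t$ ample and the locus empty). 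The intersection $\bigcap\operatorname{Supp}(E)$ defining $\mathrm{B}_+$ is genuinely not attained by one decomposition here. Your fallback --- checking $\operatorname{Vol}(\mathcal{X}|V,\mathcal{M}_t)>0$ for every irreducible $V$ meeting some $\mathcal{C}_w$ with $\zeta_w>t$ --- would suffice in principle, but it does not reduce to intersection numbers on Schubert varieties: the ELMNP criterion describes $\mathrm{B}_+$ as the union of all subvarieties with vanishing restricted volume, so to exclude a point you must control arbitrary $V$ through it, not just the $\mathcal{X}_w$. What is missing is the mechanism the paper actually uses: from the Harder--Narasimhan filtration of $\pi_*\mathcal{L}_\lambda=F_Q\times_Q H^0(G/P,M_\lambda)$ one extracts the sub-bundle $E=F_Q\times_Q H^0(G/P,M_\lambda)_{t_0,\deg(F_Q)}\otimes\mathcal{O}_C(-tp_0)$, with $t_0$ the smallest jump exceeding $t$; it has $\mu_{\min}=t_0-t>0$, hence is ample, so its symmetric powers are globally generated, and a fibrewise weight computation (the extremal weight vector of weight $w\lambda$ is nonvanishing on $BwP/P$, while weight vectors with $\mu\not\leq w\lambda$ vanish on $\overline{BwP}/P$) shows that the resulting sections of $m(\mathcal{L}_\lambda-tf)$ minus an ample class have base locus disjoint from every $\mathcal{C}_w$ with $\zeta_w>t$. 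Without some such explicit production of sections, the reverse inclusion remains unproved.
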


The height filtration $Z_t$ in Theorem \ref{intro_ thm of ht filtration} measures the positivity of the height function $h_{\mathcal{L}_\lambda}$, and we see that $\mr B_+(\shfL_\lambda-tf)$ is just the Zariski closure of $Z_t$ in $\mathcal{X}$ for every $t \not= \zeta_w$. At critical points, $\mr B_+(\shfL_\lambda-tf)$ is upper continuous while $Z_t$ is lower continuous.

A class $\alpha \in N^1(F/P)_\mathbb{R}$ is called $k$-\emph{movable} if $\mr B_+(\alpha)$ has codimension $\geq k$. As a direct corollary of Theorem \ref{intro_ prop_aug_bas}, we compute the movable cones on flag bundles. (Please see \textsection \ref{subsection_ movable cones} for the notations.)
\begin{thmA}[Theorem \ref{mov cone of flag bundle}] \label{intro_ mov cone of flag bundle}
    The $k$-th movable cone $\operatorname{Mov}^k(F/P)$ coincides with the cone  defined by 
    \begin{enumerate}
        \item $\langle \alpha^\vee, \cdot \rangle <0$ for any $\alpha \in \Delta \backslash \Delta_P$, and
        \item $\langle \deg(F_Q),w \cdot \rangle>0$ for any $w \in W/W_P$ with $\ell(w) \geq n-k+1$.
    \end{enumerate}
\end{thmA}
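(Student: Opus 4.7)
The plan is to read Theorem \ref{intro_ mov cone of flag bundle} off Theorem \ref{intro_ prop_aug_bas} by translating the codimension condition in the definition of $\operatorname{Mov}^k$ into the two linear inequalities (1) and (2). I would begin by fixing coordinates: pullback along a fiber of $\pi\colon F/P\to C$ identifies $N^1(F/P)_{\mathbb{R}}\cong X(P)_{\mathbb{R}}\oplus \mathbb{R}\cdot f$, so every numerical class has a unique expression $\mathcal{L}_\lambda - tf$ with $\lambda\in X(P)_{\mathbb{R}}$ and $t\in\mathbb{R}$. Condition (1), $\langle \alpha^\vee,\lambda\rangle <0$ for every simple root $\alpha\in\Delta\setminus\Delta_P$, is exactly strict anti-dominance of $\lambda$, equivalently $\pi$-ampleness of $\mathcal{L}_\lambda$; this is forced by membership in $\operatorname{Mov}^k(F/P)$ for $k\geq 1$ (the class must be big, hence strictly anti-dominant on fibers), and it is the open cone on which Theorem \ref{intro_ prop_aug_bas} applies.

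Granting (1), Theorem \ref{intro_ prop_aug_bas} yields
\[
\mathrm{B}_+(\mathcal{L}_\lambda - tf) \;=\; \coprod_{\substack{w \in W_Q \backslash W/W_P\\ \langle \deg(F_Q), w\lambda \rangle \leq t}} \mathcal{C}_w.
\]
By the definition of $\operatorname{Mov}^k(F/P)$ the class $\mathcal{L}_\lambda - tf$ lies in $\operatorname{Mov}^k(F/P)$ iff this union has codimension $\geq k$ in $\mathcal{X}$, iff every appearing cell $\mathcal{C}_w$ has codimension $\geq k$, iff
\[
\operatorname{codim}_{\mathcal{X}}\mathcal{C}_w \leq k-1 \;\Longrightarrow\; \langle \deg(F_Q), w\lambda\rangle > t, \qquad w \in W_Q\backslash W/W_P.
\]

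It remains to rewrite this as condition (2). Writing $w^{\max}$ for the maximum-length representative of the double coset $W_Q w W_P$ regarded in $W/W_P$, the refined Bruhat decomposition $QwP = \bigcup_{w_Q\in W_Q} B w_Q w P$ gives $\dim(QwP/P) = \ell(w^{\max})$, hence $\operatorname{codim}_{\mathcal{X}}\mathcal{C}_w = n - \ell(w^{\max})$. Thus the codimension condition reads $\ell(w^{\max}) \geq n-k+1$. Because $\deg(F_Q)$ is $W_Q$-invariant (it is a cocharacter of the center of the Levi $L_Q$), the pairing $\langle \deg(F_Q), w\lambda\rangle$ depends only on the $W_Q$-orbit of $w \in W/W_P$, so imposing the inequality at all orbit-maxima of length $\geq n-k+1$ is equivalent to imposing it at every $w \in W/W_P$ with $\ell(w)\geq n-k+1$. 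Extending the functional $\langle \deg(F_Q), w(\,\cdot\,)\rangle$ to $N^1(F/P)_{\mathbb{R}}$ linearly via $\langle \deg(F_Q), wf\rangle = 1$, the inequality $\langle \deg(F_Q), w\lambda\rangle > t$ becomes $\langle \deg(F_Q), w(\mathcal{L}_\lambda - tf)\rangle > 0$, giving condition (2).

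The main obstacle is the combinatorial package above: the codimension formula $\operatorname{codim}_{\mathcal{X}}\mathcal{C}_w = n - \ell(w^{\max})$ via the maximum-length double-coset representative, together with the $W_Q$-invariance of $\deg(F_Q)$. These are what allow one to reorganise the double-coset indexing of Theorem \ref{intro_ prop_aug_bas} into the single-coset length condition in (2); once in place, Theorem \ref{intro_ mov cone of flag bundle} follows by direct inspection.
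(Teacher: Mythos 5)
Your proof is correct and follows essentially the same route as the paper: both read the result off Theorem \ref{intro_ prop_aug_bas} by noting that $k$-movability forces bigness, hence strict anti-dominance of $\lambda$ (condition (1)), and then translate $\operatorname{codim}\mathrm{B}_+(\mathcal{L}_\lambda-tf)\geq k$ into condition (2) via the cell codimension formula and the $W_Q$-invariance of $\deg(F_Q)$ (details the paper leaves implicit but which you rightly spell out). One small caveat: the expression $\mathcal{L}_\lambda - tf$ is not unique, since the kernel of $X(P)_\mathbb{R}\to N^1(G/P)_\mathbb{R}$ is $X(G)_\mathbb{R}$, so the functionals in (1) and (2) require the well-definedness check the paper carries out rather than the direct-sum decomposition you assert.
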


Let $E$ be a vector bundle on a curve $C$. The ample cone of $\mathbb{P}(E)$ is computed in \cite{Miyaoka_chern_classes} and the big cone of $\mathbb{P}(E)$ is computed in \cite{Fulger_positive_cone}. Fulger and Lehmann computed $\operatorname{Mov}^2(\mathbb{P}(E))$ in \cite{FulgerLehmann_Zariski}.
In \cite{Biswas_pseff_cone_grassm}, the big cone of the Grassmann bundle of $E$ is computed and in \cite{Biswas_nef_cone_flag}, the ample cone of flag bundles of $E$ is computed.
Our Theorem \ref{intro_ mov cone of flag bundle} is a simultaneous generalization of all these results.

\begin{thmA}[Corollary \ref{k-movable=ZhangMinima}] \label{intro_ k-movable=ZhangMinima}
    Let $\mathcal{L}$ be a line bundle on $F/P$. Then $\mathcal{L}$ is $k$-movable if and only if $e_{k}(h_{\mathcal{L}})>0$. Here $e_{k}(h_{\mathcal{L}})$ is the $k$-th minimum of Zhang of the height function $h_{\mathcal{L}}$.
\end{thmA}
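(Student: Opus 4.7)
The plan is to express the given line bundle in a form where Theorems \ref{intro_ thm of ht filtration} and \ref{intro_ prop_aug_bas} apply, and then to translate both the $k$-movability of $\mathcal{L}$ and the positivity of $e_k(h_\mathcal{L})$ into a single combinatorial inequality involving Schubert cells and the cocharacter $\deg(F_Q)$.

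First, I would decompose the numerical class of $\mathcal{L}$ in $N^1(F/P)_\mathbb{R} \cong X^*(P)_\mathbb{R} \oplus \mathbb{R} \cdot [f]$ as $[\mathcal{L}_\lambda] - t[f]$ for some $\lambda \in X^*(P)_\mathbb{R}$ and $t \in \mathbb{R}$. In the main case where $\lambda$ is strictly antidominant, the fact that $f$ is the class of a vertical fiber gives $h_{-tf} \equiv -t$ on $X(\overline{K})$, and hence $h_\mathcal{L} = h_{\mathcal{L}_\lambda} - t$. Writing $\zeta_w := \langle \deg(F_Q), w\lambda \rangle$ and $d := \dim X$, Theorem \ref{intro_ thm of ht filtration} then gives
\begin{align*}
Z_s(h_\mathcal{L}) \;=\; Z_{s+t}(h_{\mathcal{L}_\lambda}) \;=\; \coprod_{\zeta_w < s + t} C_w,
\end{align*}
from which $e_k(h_\mathcal{L}) = \min\{\zeta_w : \dim C_w \geq d - k + 1\} - t$.

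In parallel, Theorem \ref{intro_ prop_aug_bas} yields $\mathrm{B}_+(\mathcal{L}) = \coprod_{\zeta_w \leq t} \mathcal{C}_w$. Since each $\mathcal{C}_w$ dominates $C$ with generic fiber $C_w$, one has $\dim \mathcal{C}_w = \dim C_w + 1$, and therefore
\begin{align*}
\mathrm{codim}_\mathcal{X}\, \mathrm{B}_+(\mathcal{L}) \;=\; d - \max\{\dim C_w : \zeta_w \leq t\}.
\end{align*}
Comparing the two displays, both ``$\mathcal{L}$ is $k$-movable'' (i.e.\ this codimension is $\geq k$) and ``$e_k(h_\mathcal{L}) > 0$'' reduce to the single inequality $\zeta_w > t$ for every $w \in W_Q \backslash W/W_P$ with $\dim C_w \geq d - k + 1$, which gives the desired equivalence.

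The last step is to dispose of the edge case where $\lambda$ fails to be strictly antidominant. There $\mathcal{L}|_X$ is not ample, condition~(1) of Theorem \ref{intro_ mov cone of flag bundle} is violated, and $\mathcal{L}$ is not $k$-movable for any $k \geq 1$; I would complement this by exhibiting Schubert strata in the non-antidominant directions on which $h_\mathcal{L}$ takes arbitrarily negative values, forcing $e_k(h_\mathcal{L}) \leq 0$. I expect this reduction to the strictly antidominant case to be the main technical nuisance, while the combinatorial heart of the argument---matching the descriptions of $Z_s$ and $\mathrm{B}_+$ via the dimension shift $\dim \mathcal{C}_w = \dim C_w + 1$---is then immediate.
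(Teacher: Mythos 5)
Your main case ($\lambda$ strictly antidominant) is handled correctly and is exactly the argument the paper intends (it states the corollary without proof): combine Theorem \ref{thm of ht filtration}, shifted via $h_{\mathcal{L}_\lambda-tf}=h_{\mathcal{L}_\lambda}-t$, with Theorem \ref{prop_aug_bas}, and use the dimension shift $\dim\mathcal{C}_w=\dim C_w+1$ to see that both ``$\operatorname{codim}\mathrm{B}_+(\mathcal{L})\geq k$'' and ``$e_k(h_{\mathcal{L}})>0$'' amount to the single condition $\zeta_w>t$ for all $w$ with $\dim C_w\geq d-k+1$. The bookkeeping there is right.

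The genuine gap is your edge case, and it is not a removable technical nuisance: the step ``exhibit Schubert strata on which $h_{\mathcal{L}}$ takes arbitrarily negative values, forcing $e_k(h_{\mathcal{L}})\leq 0$'' cannot be carried out, because the biconditional is false for arbitrary line bundles. Take $\mathcal{L}=\pi^*\mathcal{O}_C(p_0)=f$, i.e.\ $\lambda=0$ and $t=-1$. Then $h_{\mathcal{L}}\equiv 1$, so $Z_s=\emptyset$ for $s\leq 1$ and $Z_s=X$ for $s>1$, whence $e_k(h_{\mathcal{L}})=1>0$ for every $k$; yet $f$ is not big, so $\mathrm{B}_+(f)=\mathcal{X}$ and $\mathcal{L}$ is not even $1$-movable. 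More generally, whenever $\lambda$ lies on a wall of the antidominant cone, $M_\lambda$ is nef and semiample, $h_{\mathcal{L}_\lambda}$ is bounded below (it factors through a smaller flag bundle), and taking $t\ll 0$ makes all of Zhang's minima positive while $\mathcal{L}_\lambda-tf$ remains non-big: there simply are no points of very negative height to find. The corollary therefore needs the standing hypothesis that $\mathcal{L}$ is relatively ample over $C$ (equivalently, $\lambda$ strictly antidominant, equivalently $\mathcal{L}|_X$ big), under which your main-case argument is complete; without it, only the implication ``$k$-movable $\Rightarrow e_k(h_{\mathcal{L}})>0$'' survives, since $k$-movability forces bigness and hence strict antidominance, after which your computation applies.
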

Take $k=1$, we recover \cite[Theorem 1.1]{Ballay21} (in this special case) that $\mathcal{L}_\lambda$ big $\Longleftrightarrow$ $\zeta_{\operatorname{ess}}(h_{\mathcal{L}_\lambda})>0$.

\subsection{Example: Grassmann bundles} \label{example_grassmann}
We provide here the computation of the essential minimum and the big cone on Grassmann bundles.

Let $k$ be an algebraically closed field and $C$ be a curve over $k$ with function field $K=k(C)$. Let $E$ be a vector bundle of rank $n$ on $C$ with Harder-Narasimhan filtration $0 \subsetneq E_1 \subsetneq \cdots \subsetneq E_r$. Let $B$ be the subgroup of $\operatorname{GL}_n$ of all upper triangular matrices, and let $Q$ be the parabolic subgroup containing $B$ of type $(\operatorname{rank}(E_1),\operatorname{rank}(E_2),\dots,\operatorname{rank}(E_r))$. Here we say a parabolic subgroup containing $B$ is of type $(a_1,\dots,a_r)$ if it consists of upper block trianguluar matrices of the shape
\[
\left[ 
\begin{array}{cccc} 
  A_1 & * &  \cdots & * \\ 
   & A_2 & \cdots & * \\
   &  & \cdots & * \\
   &  &  & A_r 
\end{array} 
\right] , \quad A_i \in \operatorname{GL}_{a_i}.
\]

Let $F(E)$ be the frame bundle of $E$, which is a $\operatorname{GL}_n$-principal bundle. Then the canonical reduction of $F(E)$  to $Q$ is $F_Q$,  the $Q$-bundle of frames respecting  the filtration $0 \subsetneq E_1 \subsetneq \cdots \subsetneq E_r$. Let $\mu_i$ ($1\leq i\leq n$) be the $i$-th number in the sequence
\begin{flalign*}
    \quad \quad \quad & \underbrace{\mu(E_1),\dots,\mu(E_1)}_\text{$\operatorname{rank}(E_1)$ times}, \underbrace{\mu(E_2/E_1),\dots,\mu(E_2/E_1)}_\text{$\operatorname{rank}(E_2/E_1)$ times}, \dots, \underbrace{\mu(E/E_{r-1}),\dots,\mu(E/E_{r-1})}_\text{$\operatorname{rank}(E/E_{r-1})$ times}.
    &&
\end{flalign*}
Note that $X(T)_\mathbb{Q}=\bigoplus_{i=1}^n \mathbb{Q}\lambda_i$ where $\lambda_i: T \longrightarrow \mathbb{G}_m$ is the character of taking the $i$-th entry, and $\langle \deg(F_Q),\lambda_i \rangle=\mu_i$.

Let $\operatorname{Gr}_r(E)=F(E)/P$  be the Grassmann bundle of $r$-dimension quotients, where  $P$ is the parabolic subgroup containing $B$ of type $(n-r,r)$. Identify the Weyl group of $G$ with $S_n$ and  of $P$ with $S_{n-r}\times S_r$ as usual and  $W/W_P=S_n \big/ (S_{n-r}\times S_r)$  with the multi-index set $\big\{ (i_1,\dots,i_r): 1 \leq i_1<i_2<\cdots<i_r \leq n \big\}$ via
\begin{flalign*}
\quad \quad \quad &
\left[ 
\begin{matrix}
  1 & 2 & \cdots & n-r & n-r+1  & \cdots & n \\ 
  * & * & \cdots & * & i_1  & \cdots & i_r
\end{matrix} 
\right] \longleftrightarrow (i_1,\dots,i_r) .
&&
\end{flalign*}

$N^1(\operatorname{Gr}_r(E))$ is two dimensional with basis $\mathcal{O}(1)$ (determinant bundle of the universal bundle on $\operatorname{Gr}_r(E)$) and $f$ (a vertical fiber). Let $\det_2$ be the character 
\begin{flalign*}
    \quad \quad \quad &
    \det\nolimits_2: P \longrightarrow \mathbb{G}_m, \quad \begin{bmatrix}
        A_1 & * \\
        0 & A_2
    \end{bmatrix} \longmapsto \det(A_2).
    &&
\end{flalign*}
One checks that $\mathcal{O}(1)=F(E) \times_P k_{\det_2}$, $\det_2=\lambda_{n-r+1} + \cdots + \lambda_n$ in $X(T)$ and  $I=(i_1,\dots,i_r) \in W/W_P$ acts on $\det_2$ by $I\det_2 = \lambda_{i_1} + \cdots + \lambda_{i_r}$.

Let $h_{\mathcal{O}(1)}$ be the height function on $\operatorname{Gr}_r(E_K)$ induced by the model $(\operatorname{Gr}_r(E),\mathcal{O}(1))$. Let $I_0=(1,2,\dots,r)\in  W/ W_P$ be the longest Weyl element. By Theorem \ref{intro_ thm of ht filtration}, the essential minimum of $h_{\mathcal{O}(1)}$ is $\zeta_{\operatorname{ess}} = \langle \deg(F_Q),I_0\det_2 \rangle =\mu_1+\cdots+\mu_r$.

The only element in $\Delta \backslash \Delta_P$ is $\alpha = \lambda_{n-r}-\lambda_{n-r+1}$  and the function $\langle \alpha^\vee,\cdot \rangle$ on $N^1(\operatorname{Gr}_r(E))$ is given by $\mathcal{O}(1) \longmapsto \langle \alpha^\vee,\det_2 \rangle <0$ and $f \longmapsto 0$. $\langle \deg(F_Q), I_0\cdot \rangle$ on $N^1(\operatorname{Gr}_r(E))$ is given by $\mathcal{O}(1) \longmapsto \mu_1 + \cdots + \mu_r$ and $f \longmapsto 1$. By Theorem \ref{intro_ k-movable=ZhangMinima}, the big cone is given by $\langle \alpha^\vee,\cdot \rangle<0$ and $\langle \deg(F_Q), I_0\cdot \rangle>0$. One sees that it is the cone given by extremal rays $f$ and $\mathcal{O}(1)-(\mu_1+\dots+\mu_r)f$.

\subsection{Acknowledgements}
We thank Xinyi Yuan and Ye Tian for their consistent supports.
We thank Jie Liu, Marc Besson, Hongsheng Hu, Mihai Fulger and Brian Lehmann for discussions and email correspondences.

The idea of comparing height filtrations and base loci was proposed by Huayi Chen in a private conversation. The idea of computing concave transform was inspired from discussions with François Ballaÿ. The idea of proving Proposition \ref{weighted average equality} by Weyl symmetry was inspired from discussions with Tao Gui. We wish to thank them especially.

\section{Height filtrations and successive minima} \label{section_ ht fil and succ min}
Let $k$ be an algebraically closed field of characteristic zero and $C$ be a projective smooth curve over $k$ with function field $K=k(C)$. Let $G$ be a connected reductive group over $k$. Let $P \subseteq G$ be a parabolic subgroup and $\lambda: P \longrightarrow \mathbb{G}_m$ be a strictly antidominant character. Let $F$ be a principal $G$-bundle over $C$. Let $\mathcal{X}=F/P$ and $X=(F/P)_K$. Let $\mathcal{L}_\lambda = F \times_P k_\lambda$ and $h_{\mathcal{L}_\lambda}: X(\overline{K}) \longrightarrow \mathbb{R}$ be the induced height function.

\[ \xymatrix{
X=(F/P)_K \ar[d] \ar[r] & \mathcal{X}=F/P \ar[d] \\
\operatorname{Spek}(K) \ar[r] & C
} \]

Let $F_Q$ be the canonical reduction of $F$ to $Q$, and $\deg(F_Q) \in X(T)^\vee_\mathbb{Q}$ be the induced cocharacter. Let $W,W_P$ and $W_Q$ be the Weyl group of $G,L_P$ and $L_Q$. For $w \in W_Q \backslash W/W_P$, let $C_w=(F_Q \times_Q QwP/P)_K \subseteq X$ be the corresponding Schubert cell. The main result in this section is the following theorem:
\begin{thm} For any $t \in \mathbb{R}$, let $Z_t \subseteq X$ be the Zariski closure of the set $\big\{ x \in X(\overline{K}): h_{\mathcal{L}_\lambda}(x) < t \big\}$. Then
	\begin{flalign*}
    \quad \quad \quad &
    Z_t = X \Bigg\backslash \coprod_{\langle \deg (F_{Q }), w\lambda \rangle\geq t} C_w = \coprod_{\langle \deg (F_{Q }), w\lambda \rangle<t} C_w.
    &&
  \end{flalign*} \label{thm of ht filtration}
\end{thm}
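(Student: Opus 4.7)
The plan is to use the canonical reduction $F_Q$ to stratify $\mathcal{X} = F/P$ into Schubert-cell bundles, identify $\mathcal{L}_\lambda$ on each cell in terms of the transformed character $w\lambda$, and then bound the height via the semistability of the Levi bundle. The Bruhat decomposition $G/P = \coprod_{w \in W_Q \backslash W/W_P} QwP/P$ twists by $F_Q$ to $\mathcal{X} = \coprod_w \mathcal{C}_w$ with $\mathcal{C}_w = F_Q \times_Q QwP/P$, whose generic fibers are the Schubert cells $C_w$. Through the isomorphism $QwP/P \cong Q/(Q \cap wPw^{-1})$ the character $\lambda$ transforms into $w\lambda$ on $Q \cap wPw^{-1}$, yielding $\mathcal{L}_\lambda|_{\mathcal{C}_w} \cong F_Q \times_{Q \cap wPw^{-1}} k_{w\lambda}$.

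Given $x \in C_w(\overline K)$ of residue degree $d$, the normalization of $\overline{\{x\}}$ produces a finite cover $\pi \colon C' \to C$ of degree $d$ together with a section $\sigma \colon C' \to \mathcal{X}_{C'}$ whose image meets $(\mathcal{C}_w)_{C'}$ in a dense open. By properness, $\sigma$ extends to a reduction $\xi$ of $\pi^*F_Q$ to $Q \cap wPw^{-1}$, and the height formula reads $h_{\mathcal{L}_\lambda}(x) = d^{-1}\deg_{C'}(\xi \times^{w\lambda} \mathbb{G}_m)$. Because $F_Q$ is the canonical reduction, the Levi bundle $F \times_Q L_Q$ is semistable of slope $\deg(F_Q)$, and the slope inequality applied to $w\lambda$ (decomposed against the root system of $Q$) gives $d^{-1}\deg_{C'}(\xi \times^{w\lambda} \mathbb{G}_m) \geq \langle \deg(F_Q), w\lambda \rangle = \zeta_w$, proving $h_{\mathcal{L}_\lambda}|_{C_w} \geq \zeta_w$. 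For the reverse direction one constructs, after a suitable finite cover $C'/C$, a positive-dimensional family of sections of $(\mathcal{C}_w)_{C'} \to C'$ whose associated reductions $\xi$ realize equality in the slope inequality; varying over this family and over $C'$ produces a Zariski dense set of points in $C_w(\overline K)$ on which $h_{\mathcal{L}_\lambda} = \zeta_w$, and hence $C_w \subseteq Z_t$ whenever $\zeta_w < t$.

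To upgrade these two bounds to the claimed equality $Z_t = \coprod_{\zeta_w < t} C_w$, it remains to verify that the right-hand side is already closed in $X$. This reduces to the monotonicity $v \leq w \Longrightarrow \zeta_v \leq \zeta_w$ in the Bruhat order on $W_Q \backslash W/W_P$, a standard compatibility between the weight poset and Bruhat order, used together with the dominance of $\deg(F_Q)$ and the strict anti-dominance of $\lambda$. The main obstacle is the slope inequality underlying the lower bound: one must carry out the reduction calculus for principal $G$-bundles and pair $w\lambda$ against the HN data of $F_Q$ via the correct Levi-quotient slope machinery. The density step in the attainment argument also requires care, as one must guarantee the existence of enough sections of each Schubert-cell bundle (possibly after passing to finite covers) for the equality cases to sweep out a Zariski dense subset of $C_w$.
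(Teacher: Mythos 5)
Your stratification, your lower bound $h_{\mathcal{L}_\lambda}|_{C_w}\geq \zeta_w$ via relative position and the slope inequality against the canonical reduction, and your observation that closedness of $\coprod_{\zeta_w<t}C_w$ reduces to the monotonicity of $w\mapsto\langle\deg(F_Q),w\lambda\rangle$ along the Bruhat order all match the paper's skeleton (the slope inequality you defer is exactly Schieder's, which the paper cites rather than reproves). The genuine gap is in the other half of the argument: to get $C_w\subseteq Z_t$ for $t>\zeta_w$ you propose to construct, over suitable covers $C'/C$, a Zariski-dense family of sections of $(\mathcal{C}_w)_{C'}\to C'$ whose reductions \emph{attain equality} in the slope inequality. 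No such construction is given, and the statement itself is stronger than what is needed or known: an essential minimum need not be attained, so the correct target is only that points of height $<\zeta_w+\epsilon$ are dense in $C_w$ for every $\epsilon>0$. Producing reductions of the pulled-back semistable Levi bundle with prescribed (near-)extremal degree, densely in $C_w$, is already a nontrivial theorem for $G=\mathrm{GL}_2$ (it amounts to finding maximal-degree line subbundles of pullbacks of a semistable rank-$2$ bundle realizing the semistability bound), and your proposal offers no mechanism for it. There is also a smaller unaddressed point in the same step: a section of $\mathcal{X}_{C'}\to C'$ generically landing in $(\mathcal{C}_w)_{C'}$ may fall into smaller strata at finitely many points, so it does not literally give a reduction of $F_Q$ to $Q\cap wPw^{-1}$ over all of $C'$; one must work with relative position defined at the generic point (as Schieder does).

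The paper closes this gap by an entirely different mechanism: Ballaÿ's theorem identifies $\zeta_{\operatorname{ess}}(X_w)$ with $\lim_n \tfrac1n\mu_{\max}\big(\pi_*(\mathcal{L}_{n\lambda}|_{\mathcal{X}_w})\big)$, and the pushforward $\pi_*(\mathcal{L}_{n\lambda}|_{\mathcal{X}_w})=F_Q\times_Q H^0(\overline{QwP}/P,M_{n\lambda})$ has Harder--Narasimhan filtration computable from the weight decomposition (Schieder again), giving maximal slope $n\langle\deg(F_Q),w\lambda\rangle$. This converts the density of small points into a computation with global sections and avoids constructing any sections of $\mathcal{C}_w$ at all. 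Unless you supply either an appeal to such a result or an actual construction of the asserted dense families, your proof of the inclusion $\coprod_{\zeta_w<t}C_w\subseteq Z_t$ does not go through.
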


\begin{cor}
	The filtration $\big\{ Z_t:t \in \mathbb{R} \big\}$ is finite, and is given by successively deleting Schubert cells $C_w$ with jumping points $\zeta_w=\langle \deg (F_{Q }), w\lambda \rangle$ for $w \in W_Q \backslash W/W_P$. \label{zeta_w successive minima}
\end{cor}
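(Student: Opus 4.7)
The plan is to extract this corollary directly from Theorem \ref{thm of ht filtration} by pure bookkeeping, with finiteness of the Weyl group doing all the work. Since $G$ is connected reductive, $W$ is finite, hence so is the double coset space $W_Q \backslash W/W_P$. Consequently, the collection of Schubert cells $\{C_w\}$ and the set of real thresholds $\{\zeta_w = \langle \deg(F_Q), w\lambda \rangle\}$ are both finite.

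Given the identity $Z_t = \coprod_{\zeta_w < t} C_w$ supplied by Theorem \ref{thm of ht filtration}, the indexing condition $\zeta_w < t$ depends on $t$ only through its position relative to this finite set of thresholds. Hence $t \mapsto Z_t$ is constant on each connected component of $\mathbb{R}$ minus the thresholds and takes only finitely many distinct values, which yields finiteness of the filtration.

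To identify the jumps, I would fix $t_0 \in \mathbb{R}$ and compare $Z_{t_0 - \varepsilon}$ with $Z_{t_0 + \varepsilon}$ for small $\varepsilon > 0$: the latter gains exactly the Schubert cells $C_w$ with $\zeta_w = t_0$, an empty gain unless $t_0$ itself lies in the threshold set. Reading this in decreasing $t$ expresses the filtration as successive deletions of Schubert cells at the values $\zeta_w$, as claimed. No step poses a genuine obstacle; the real content lies in Theorem \ref{thm of ht filtration} itself, and the corollary is essentially a reformulation.
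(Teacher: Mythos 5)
Your proposal is correct and matches the paper's treatment: the corollary is stated there without proof precisely because it is the immediate bookkeeping consequence of Theorem \ref{thm of ht filtration} that you describe, with finiteness of $W$ (hence of $W_Q\backslash W/W_P$ and of the threshold set $\{\zeta_w\}$) doing all the work. Your comparison of $Z_{t_0-\varepsilon}$ and $Z_{t_0+\varepsilon}$ correctly identifies the jumps as occurring exactly at the values $\zeta_w$.
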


Let's call $\big\{ Z_t: t \in \mathbb{R} \big\}$ the \emph{height filtration}, and call its jumping points $\zeta_w$ the \emph{successive minima}. Note that our definition of successive minima are slightly different with Zhang \cite{Zhang_smallPoints}. Let $d=\dim G/P$, Zhang considers only dimension jumps $e_i=\inf\big\{ t: \dim Z_t \geq d-i+1 \big\}$, $i=1,\dots,d+1$. In the following, $e_i$ will be called \emph{Zhang's successive minima} to avoid ambiguity.

For $w \in W_Q \backslash W/W_P$, let
\begin{flalign*}
	\quad \quad \quad &
	\ell(w)=\max_{\sigma\in W_Q w W_P}\min_{\tau\in \sigma W_P}\ell(\tau)
	&&
\end{flalign*} be the length function, which is nothing but the dimension of the cell $C_w$.

\begin{cor}
	Zhang's successive minima are $e_i = \min \big\{\zeta_w: \ell(w) \geq d-i+1 \big\}$.
\end{cor}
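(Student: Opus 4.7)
The plan is to deduce this corollary directly from Theorem \ref{thm of ht filtration}, treating it as a purely combinatorial consequence. First I would invoke the theorem to write
\[
Z_t = \coprod_{\zeta_w < t} C_w, \qquad \zeta_w := \langle \deg(F_Q), w\lambda \rangle,
\]
where the disjoint union runs over $w \in W_Q \backslash W / W_P$. Since this union is finite, and by the definition of $\ell(w)$ each Schubert cell $C_w$ has dimension $\ell(w)$, I obtain the dimension formula
\[
\dim Z_t = \max\bigl\{ \ell(w) : \zeta_w < t \bigr\},
\]
with the convention that the maximum over the empty set is $-\infty$ (corresponding to $Z_t = \emptyset$).

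Next I would unwind the definition of Zhang's successive minima. The condition $\dim Z_t \geq d-i+1$ is equivalent to the existence of some $w \in W_Q \backslash W / W_P$ with $\ell(w) \geq d-i+1$ satisfying $\zeta_w < t$, which is in turn equivalent to
\[
t > \min\bigl\{ \zeta_w : \ell(w) \geq d-i+1 \bigr\}.
\]
Taking the infimum over all such $t$ then yields
\[
e_i \;=\; \inf\bigl\{ t : \dim Z_t \geq d-i+1 \bigr\} \;=\; \min\bigl\{ \zeta_w : \ell(w) \geq d-i+1 \bigr\},
\]
which is the claimed formula.

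The only point requiring a line of care is the handover between the strict inequality $\zeta_w < t$ appearing in the description of $Z_t$ and the infimum defining $e_i$. Because $W_Q \backslash W / W_P$ is finite, the set of values $\{\zeta_w\}$ is finite, so the infimum is attained and can legitimately be written as a minimum; this is why the strict inequality causes no boundary issue. There is no substantial obstacle: once Theorem \ref{thm of ht filtration} is granted, the corollary is a short bookkeeping exercise, and its role in the paper is simply to translate the fine height filtration into Zhang's coarser dimension-jump invariants.
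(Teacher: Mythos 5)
Your argument is correct and is exactly the bookkeeping the paper leaves implicit: the corollary is stated without proof as an immediate consequence of Theorem \ref{thm of ht filtration} together with the fact that $\ell(w)=\dim C_w$, so $\dim Z_t=\max\{\ell(w):\zeta_w<t\}$ and the infimum defining $e_i$ collapses to the stated minimum. The only points worth a passing mention are the ones you already flag (finiteness of $W_Q\backslash W/W_P$, so the infimum is attained) plus the observation that $\{w:\ell(w)\ge d-i+1\}$ is nonempty for each $i=1,\dots,d+1$ since the longest class has $\ell(w_0)=d$.
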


\subsection{Vector bundles}
Let $E$ be a vector bundle on $C$. The \textit{degree} of $E$ is $\deg(E) := \deg(\det(E))$ and the \textit{slope} of $E$ is $\mu(E) := \deg(E)/\operatorname{rk}(E)$. It is called \textit{slope semistable} if for every subbundle $F$ of $E$, $\mu(F) \leq \mu(E)$. This is equivalent to $\mu(Q) \geq \mu(E)$ for every quotient bundle $Q$ of $E$.

There exists uniquely a filtration $0=E_0 \subseteq E_1 \subseteq \cdots \subseteq E_r=E$ such that
	\begin{enumerate}
		\item $E_i/E_{i-1}$ is semistable;
		\item $\mu(E_i/E_{i-1}) > \mu(E_{i+1}/E_{i})$.
	\end{enumerate}
This filtration is called the \emph{Harder-Narasimhan filtration} of $E$. $\mu(E_1)$ is called \emph{maximal slope} of $E$ and is also denoted by $\mu_{\max}(E)$. $\mu(E/E_{r-1})$ is called \emph{minimal slope} of $E$ and is also denoted by $\mu_{\min}(E)$.

\subsection{Principal bundles}
For any linear algebraic group $\Gamma/k$, let $X(\Gamma)=\mathrm{Hom}(\Gamma,\mathbb{G}_m)$ denote the character group of $\Gamma$. For a cocharacter $f \in X(\Gamma)^\vee$ and a character $\lambda \in X(\Gamma)$, we shall denote the pairing by $\langle f, \lambda \rangle$.
For any $\lambda \in X(\Gamma)$, denote by $k_{\lambda}$ the one-dimensional representation on the vector space $k$ with $\Gamma$ acting by $\lambda$.

A \emph{principal $\Gamma$-bundle} on $C$ is a variety $F$ equipped with a right action of $\Gamma$ and a $\Gamma$-equivariant smooth morphism $F \longrightarrow C$ such that the map
 \begin{flalign*}
\quad\quad\quad &F \times_C (C \times \Gamma) \longrightarrow F \times_C F,\quad (f,(x,g)) \longmapsto (f,fg) &
 \end{flalign*} is an isomorphism. 

Attached to a principal $\Gamma$-bundle $F$, one has an associated cocharacter
\begin{flalign*}
 \quad\quad\quad & \deg(F): X(\Gamma) \longrightarrow \mathbb{Z},\quad \lambda \longmapsto \langle \deg(F),\lambda \rangle=\deg(F \times_\Gamma k_\lambda).&
\end{flalign*} Here 
$\deg(F \times_\Gamma k_\lambda)$ is the degree of the line bundle $F\times_\Gamma k_\lambda$ on the curve $C$.

Let $H$ be a closed subgroup of a linear algebraic group $\Gamma$ over $k$. A \emph{reduction of structure group} of $F$ to $H$ is a pair $(F_H,\phi)$ where $F_H$ is a principal $H$-bundle and $\phi: F_H \times_H \Gamma \simeq F$ is an isomorphism.

By the universal property of the quotient $F/H$, the assignment to a section $\sigma: C \longrightarrow F/H$ the reduction $\sigma^*F$ of $F$ to $H$ is a one-one correspondence between reductions of structure group of $F$ to $H$ and sections of $F/H \longrightarrow C$.

\subsection{Reductive groups, characters and cocharacters}
Let $G$ be a connected reductive group over $k$. Fix a Borel subgroup $B\subseteq G$ and a maximal torus $T\subseteq B$. Let $W$ be the Weyl group and $\Delta$ be the set of simple roots with respect to $(G,B,T)$. For any $\alpha\in\Delta$, we denote by $\alpha^\vee$ the corresponding simple coroot.

We shall consider only parabolic subgroups containing $B$. For such a parabolic subgroup $P$, let $W_P\subseteq W$ be the Weyl group $W(L_P)$ of the Levi factor $L_P\subseteq P$ and $\Delta_P\subseteq \Delta$ be the simple roots of $L_P$. Note that the natural inclusion 
\begin{flalign*}
	\quad \quad \quad &
	X(P) \longrightarrow X(L_P) \longrightarrow X(Z(L_P))
	&&
\end{flalign*} becomes an isomorphism after tensoring with $\mathbb{Q}$. Thus we have
\begin{flalign*}
\quad\quad\quad & X(T)_\mathbb{Q} \longrightarrow X(Z(L_P))_\mathbb{Q}=X(P)_\mathbb{Q} &
\end{flalign*}
and by taking duals, we get the so-called \emph{slope map} $X(P)^\vee_\mathbb{Q} \longrightarrow X(T)^\vee_\mathbb{Q}$ introduced in \cite[\textsection 2.1.3]{schieder_hardernarasimhan_2015}. In other words, a cocharacter on $X(P)_\mathbb{Q}$ can be extended canonically to $X(T)_\mathbb{Q}$.

\subsection{The canonical reduction to a parabolic subgroup}
Let $G$ be a connected reductive group over $k$ and $F$ be a principal $G$-bundle over $C$. Let $F_P$ be a reduction of $F$ to a parabolic subgoup $P$. Let $\deg(F_P) \in X(T)_\mathbb{Q}^\vee$ be the induced (rational) cocharacter.

The $G$-bundle $F$ is called \emph{semistable} if for any parabolic subgroup $P$, any reduction $F_P$ of $F$ to $P$ and any dominant character $\lambda$ of $P$ which is trivial on $Z(G)$, we have $\langle \deg(F_P),\lambda \rangle \leq 0$.

Among all filtrations of a vector bundle, there is a canonical one (the Harder-Narasimhan filtration). Similarly, among all reduction to parabolic subgroups, there is a canonical one. A reduction $F_P$ of $F$ to a parabolic subgroup $P$ is called \emph{canonical} if the following two conditions hold:
	\begin{enumerate}
		\item The the principal $L_P$ bundle $F_P \times_P L_P$ is semistable.
		\item For any non-trivial character $\lambda$ of $P$ which is non-negative linear combination of simple roots, $\langle \deg(F_P), \lambda \rangle>0$.
	\end{enumerate}

Behrend in \cite{behrend_semi-stability_1995} proved canonical reduction exists uniquely. Note that when $G=\operatorname{GL}_n$, the above definition of semistability recovers Mumford's slope stability of vector bundles and the above deinition of canonical reduction recovers the Harder-Narasimhan filtration of vector bundles.

\subsection{Arakelov geometry on flag varieties over function fields}
Let $G$ be a connected reductive group over $k$ and $F$ be a principal $G$-bundle over $C$. Let $P \subseteq G$ be a parabolic subgroup and $\lambda: P \longrightarrow \mathbb{G}_m$ be a strictly antidominant character. Let $\mathcal{X}=F/P$ and $X=(F/P)_K$.

\[ \xymatrix{
X=(F/P)_K,L \ar[d] \ar[r] & \mathcal{X}=F/P,\mathcal{L}_\lambda \ar[d] \\
\operatorname{Spek}(K) \ar[r] & C
} \]
Let $\mathcal{L}_\lambda = F \times_P k_\lambda$, $L=\mathcal{L}_{\lambda,K}$ and $h_{\mathcal{L}_\lambda}: X(\overline{K}) \longrightarrow \mathbb{R}, x \longmapsto \frac{\overline{\{x\}} \cdot \mathcal{L}_\lambda}{\deg(x)}$ be the induced height function. Here $\overline{\{ x \}}$ is the Zariski closure of $x$ in $\mathcal{X}$.

For any $t \in \mathbb{R}$, let $Z_t \subseteq X$ be the Zariski closure of the set $\big\{ x \in X(\overline{K}): h_{\mathcal{L}_\lambda}(x)<t \big\}$. Let $\zeta_{\operatorname{ess}}(X) := \inf \big\{ t: Z_t = X \big\}$ be the \emph{essential minimum} of $h_{\mathcal{L}_\lambda}$ on $X$.

Let $F_Q$ be the canonical reduction of $F$ to $Q$, and $\deg(F_Q) \in X(T)^\vee_\mathbb{Q}$ be the induced cocharacter. Let $W,W_P$ and $W_Q$ be the Weyl group of $G,L_P$ and $L_Q$. For $w \in W_{Q } \backslash W / W_{P }$, write $\mathcal{C}_w=F_{Q } \times_{Q } QwP/P$, $\mathcal{X}_w=F_{Q } \times_{Q } \overline{QwP}/P$, $C_w=\mathcal{C}_{w,K}$ and $X_w = \mathcal{X}_{w,K}$.

Note that for any $w^\prime \in W_Q$ and $\lambda \in X(T)$, we have $w^\prime\lambda-\lambda\in\mathbb{Z}[\Delta_Q]$ \and consequently $\langle \deg(F_Q),w^\prime\lambda \rangle = \langle \deg(F_Q),\lambda \rangle$. Note also that for any $\lambda \in X(P)$ and $w \in W_P$, $w\lambda=\lambda$. So the number $\langle \deg(F_Q),w\lambda \rangle$ is well-defined for any $\lambda \in X(P)$ and $w \in W_Q \backslash W /W_P$.

$\mathcal{X}_w$ is a closed subscheme of $\mathcal{X}$, so $\mathcal{L}_\lambda|_{\mathcal{X}_w}$ induces a height function $h_{\mathcal{L}_\lambda}: X_w(\overline{K}) \longrightarrow \mathbb{R}$, which is nothing but the restriction of $h_{\mathcal{L}_\lambda}: X(\overline{K}) \longrightarrow \mathbb{R}$ to $X_w(\overline{K})$. Let $\zeta_{\operatorname{ess}}(X_w)$ be the essential minimum of $h_{\mathcal{L}_\lambda}$ on $X_w$.

Theorem \ref{thm of ht filtration} follows from the following two propositions.
\begin{prop} \label{height lower bound on Schubert cells}
 $h_{\mathcal{L}_\lambda}(x) \geq \langle \deg F_{Q }, w\lambda \rangle$ for any $x \in C_w(\overline{K})$.
\end{prop}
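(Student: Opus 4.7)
The plan is to use the Borel--Weil embedding of $G/P$ attached to $\lambda$, rephrase the height bound as a slope inequality for a line subbundle on $C'$, and extract the inequality from Behrend's semistability of the Levi bundle $F_Q\times_Q L_Q$.

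After standard reductions (extend $K$ to $K'$ with $n=[K':K]=\deg x$, let $\pi\colon C'\to C$ be the corresponding cover of smooth projective curves, use properness to extend $x$ to a section $\sigma\colon C'\to\mathcal{X}_{C'}$, note that the induced $P$-reduction $F_P'$ of $\pi^*F$ satisfies $\sigma^*\mathcal{L}_\lambda=F_P'\times_P k_\lambda$, and that $\pi^*F_Q$ remains the canonical $Q$-reduction of $\pi^*F$ with $\deg\pi^*F_Q=n\deg F_Q$), the claim reduces to $\deg_{C'}\sigma^*\mathcal{L}_\lambda\geq n\langle\deg F_Q,w\lambda\rangle$. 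Let $V=V_{-\lambda}$ denote the irreducible $G$-representation of highest weight $-\lambda$; since $-\lambda$ is strictly dominant relative to $P$, the stabilizer of the highest weight line is exactly $P$, yielding $G/P\hookrightarrow\mathbb{P}(V)$ with $\mathcal{L}_\lambda=\mathcal{O}_{\mathbb{P}(V)}(1)|_{G/P}$. Twisting by $\pi^*F$ gives the relative Plücker embedding $\mathcal{X}_{C'}\hookrightarrow\mathbb{P}(\mathcal{V})$ with $\mathcal{V}:=\pi^*F\times_G V$, under which $\sigma$ becomes a line subbundle $\mathcal{L}\hookrightarrow\mathcal{V}$ with $\sigma^*\mathcal{L}_\lambda=\mathcal{L}^{-1}$; the task becomes proving $\deg\mathcal{L}\leq-n\langle\deg F_Q,w\lambda\rangle$.

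Using $\pi^*F_Q$, define the $Q$-stable filtration $\mathcal{V}^{\geq\mu}:=\pi^*F_Q\times_Q V^{\geq\mu}$, where $V^{\geq\mu}\subseteq V$ is the $T$-span of weight vectors $v_\xi$ with $\xi|_{Z(L_Q)}\geq\mu$ in the order induced by non-negative combinations of simple roots in $\Delta\setminus\Delta_Q$; $Q$-stability holds because $U_Q$ strictly raises the $Z(L_Q)$-weight. Since $QwP/P\subseteq G/P$ is the $Q$-orbit of $wP/P$, and the latter maps to the weight line $[v_{-w\lambda}]\in\mathbb{P}(V)$, the hypothesis $x\in C_w(\overline K)$ translates to: $\mathcal{L}$ lies generically in $\mathbb{P}(\mathcal{V}^{\geq-w\lambda|_{Z(L_Q)}})$ and projects generically non-trivially to the subquotient $\mathcal{V}^{\geq-w\lambda|_{Z(L_Q)}}/\mathcal{V}^{>-w\lambda|_{Z(L_Q)}}$. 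By subbundle rigidity and torsion-freeness of the subquotient, $\mathcal{L}\subseteq\mathcal{V}^{\geq-w\lambda|_{Z(L_Q)}}$ holds on all of $C'$, with the projection to the subquotient injective. The subquotient is the associated bundle of an $L_Q$-representation all of whose irreducible constituents have central character $-w\lambda|_{Z(L_Q)}$; Behrend's semistability of $F_Q\times_Q L_Q$ therefore makes $\mathcal{V}^{\geq}/\mathcal{V}^{>}$ semistable of slope $\langle\deg\pi^*F_Q,-w\lambda|_{Z(L_Q)}\rangle=-n\langle\deg F_Q,w\lambda\rangle$, the last equality using that the $\mathbb{Q}[\Delta_Q]$-component of $w\lambda$ pairs to zero with $\deg F_Q$ under the slope map. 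Semistability forces $\deg\mathcal{L}\leq-n\langle\deg F_Q,w\lambda\rangle$, which gives the theorem.

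The main obstacle is the translation, in the preceding paragraph, of the Schubert cell condition $x\in C_w(\overline K)$ into the filtration statement for $\mathcal{L}$. Precisely, one must verify that the Bruhat cell $QwP/P\subseteq G/P\subseteq\mathbb{P}(V)$ consists exactly of lines lying in $\mathbb{P}(V^{\geq-w\lambda|_{Z(L_Q)}})$ with non-zero image in the subquotient $V^{\geq}/V^{>}$. This is a standard compatibility between the Bruhat stratification of $G/P$ and the weight filtration on extremal-weight modules, checkable by the action of the root subgroups generating $Q$.
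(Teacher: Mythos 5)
Your proof is correct, and it shares the paper's overall skeleton: pass to the cover $C'\to C$ determined by the residue field of $x$, convert the height into the degree of the line bundle attached to the induced $P$-reduction of $\pi^*F$, and then exploit the semistability built into the canonical reduction $F_Q$. Where it diverges is at the key inequality. The paper encodes ``$x\in C_w$'' as the statement that the induced $P$-reduction is in relative position $w$ with respect to $F_Q$ (Lemma \ref{relative position}) and then quotes the resulting degree inequality from Schieder's proof of his Theorem 4.1 as a black box (Proposition \ref{Schieder 4.6}); you instead reprove that inequality. Your argument checks out: the Borel--Weil embedding $G/P\hookrightarrow\mathbb{P}(V_{-\lambda})$ turns $\sigma$ into a line subbundle $\mathcal{L}\subseteq\pi^*F\times_G V_{-\lambda}$ with $\sigma^*\mathcal{L}_\lambda=\mathcal{L}^{-1}$; the filtration $\mathcal{V}^{\geq\mu}$ is $Q$-stable because $U_Q$ strictly raises the $Z(L_Q)$-weight, so the graded piece is an $L_Q$-module of central character $\mu$; the direction of the Bruhat--filtration compatibility you actually need (that $QwP/P$ lands in $\mathbb{P}(V^{\geq -w\lambda|_{Z(L_Q)}})$ with nonzero image in the graded piece) is immediate from $Q$-stability together with $[wv_{-\lambda}]=[v_{-w\lambda}]$; the passage from the generic point to all of $C'$ is the standard torsion-freeness argument; and semistability of the graded piece of slope $-n\langle\deg F_Q,w\lambda\rangle$ forces $\deg\mathcal{L}\leq -n\langle\deg F_Q,w\lambda\rangle$. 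What your route buys is self-containedness, and it runs on exactly the mechanism the paper uses later in Proposition \ref{thm_maxslop_schubert} (the $\deg(F_Q)$-weight filtration of $F_Q\times_Q V$ with semistable graded pieces); the cost is length and the need to invoke, and cite explicitly, the Ramanan--Ramanathan-type fact that in characteristic zero a semistable principal $L_Q$-bundle yields semistable associated bundles for irreducible representations, plus the observation that a direct sum of semistable bundles of equal slope is semistable.
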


\begin{prop} \label{ess min on Schubert varieties}
	$\zeta_{\operatorname{ess}}(X_w)=\langle \deg (F_{Q }), w\lambda \rangle$.
\end{prop}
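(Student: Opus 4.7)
The inequality $\zeta_{\mathrm{ess}}(X_w) \geq \langle \deg F_Q, w\lambda \rangle$ is immediate from Proposition \ref{height lower bound on Schubert cells}. Using the standard characterization
\[
\zeta_{\mathrm{ess}}(X_w) = \sup_{Y \subsetneq X_w} \inf_{x \in X_w(\overline{K}) \setminus Y} h_{\mathcal{L}_\lambda}(x)
\]
where $Y$ ranges over proper Zariski closed subsets, one takes $Y = X_w \setminus C_w$ (proper because $C_w$ is open dense in $X_w$) and applies Proposition \ref{height lower bound on Schubert cells} to the points of $C_w(\overline K)$.

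For the upper bound, the plan is to show that for every $\epsilon > 0$ and every proper closed $Y \subsetneq X_w$, there exists a point $x \in C_w(\overline{K}) \setminus Y$ with $h_{\mathcal{L}_\lambda}(x) < \langle \deg F_Q, w\lambda \rangle + \epsilon$. An $\overline{K}$-point of $C_w$ arises, after pulling back to a suitable finite cover $\pi : C' \to C$, from a section $\sigma : C' \to F_{C'}/P$ whose generic relative position to $\pi^* F_Q$ is $w$; its height is then $\frac{1}{\deg \pi} \deg_{C'} \sigma^* \mathcal{L}_\lambda$. I therefore need to construct a Zariski-dense family of such sections whose normalized heights approach $\langle \deg F_Q, w\lambda \rangle$.

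The construction I have in mind starts from a distinguished basepoint. Fix a lift $\dot w \in N_G(T)$ of $w$; after passing to a finite cover trivializing $F_Q$ sufficiently, the $T$-fixed point $\dot w P/P$ of the cell $QwP/P$ produces a section $\sigma_0$ of $\mathcal{C}_w$ with $\deg \sigma_0^* \mathcal{L}_\lambda = \langle \deg F_Q, w\lambda \rangle$, matching the equality case of Proposition \ref{height lower bound on Schubert cells}. I would then deform $\sigma_0$ using the action of the twisted unipotent bundle $\mathcal{U} = F_Q \times_Q U$, where $U \subseteq Q$ is a unipotent subgroup acting transitively on $QwP/P$ from the basepoint; by $Q$-equivariance this descends to an action on $\mathcal{C}_w$, and sections of $\mathcal{U}$ give deformations of $\sigma_0$ whose images collectively meet every non-empty open subset of $C_w$. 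The height increment contributed by a deformation is controlled by the degree of the corresponding section of $\mathcal{U}$, and by working over a cover $\pi : C' \to C$ of sufficiently large degree one can dilute these contributions below $\epsilon$ after renormalizing by $\deg \pi$.

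The main obstacle I foresee is the quantitative coordination of density and height control: the basepoint section is rigid, so density requires genuine deformation, while height control requires these deformations to carry only small arithmetic weight. The key input should be Behrend's semistability of the Levi quotient $F_Q \times_Q L_Q$ together with the strict antidominance of $\lambda$; these jointly bound the slopes of the subquotients of $\mathcal{L}_\lambda|_{\mathcal{C}_w}$ coming from the $U$-filtration on the representation $k_\lambda$, forcing the unit-per-length height increment of a generic deformation to be made arbitrarily small after passage to high-degree covers. Making this weight-space bookkeeping precise, especially verifying that the deformed sections genuinely remain in $C_w$ rather than degenerating to a smaller stratum, is the technical crux of the argument.
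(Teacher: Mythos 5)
Your lower bound is fine: Proposition \ref{height lower bound on Schubert cells} together with the characterization of $\zeta_{\operatorname{ess}}$ as $\sup_Y \inf_{x\notin Y} h(x)$ and the choice $Y=X_w\setminus C_w$ gives $\zeta_{\operatorname{ess}}(X_w)\geq \langle\deg(F_Q),w\lambda\rangle$ exactly as you say. The upper bound, however, is where the entire content of the proposition lies, and what you offer there is a plan rather than a proof — you say so yourself when you defer the ``quantitative coordination of density and height control'' and the verification that deformed sections stay in $C_w$. Beyond being incomplete, the plan has a concrete problem at its starting point: the $T$-fixed point $\dot wP/P$ is not fixed by $Q$, so it does not induce a section of $\mathcal{C}_w=F_Q\times_Q QwP/P$; to get a section through it you need a reduction of $F_Q$ to the stabilizer $Q\cap \dot wP\dot w^{-1}$, and the degree of the resulting section is $\langle\deg(F_{Q\cap \dot wP\dot w^{-1}}),w\lambda\rangle$, which depends on the chosen reduction and need not equal the slope $\langle\deg(F_Q),w\lambda\rangle$ (already for $G=\mathrm{GL}_2$ a reduction of a semistable bundle to the torus generally has unbalanced degrees). ``Trivializing $F_Q$ sufficiently'' on a cover cannot fix this, since full trivialization would kill the height altogether. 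So even the existence of a single point of $C_w(\overline K)$ of height close to $\langle\deg(F_Q),w\lambda\rangle$ is not established by your argument, let alone a Zariski-dense family of them.

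The paper takes an entirely different and much shorter route that avoids constructing any points: Ballaÿ's theorem \cite[Theorem 1.2]{Ballay21} identifies $\zeta_{\operatorname{ess}}(X_w)$ with $\lim_n \mu_{\max}\bigl(\pi_*\mathcal{L}_{n\lambda}|_{\mathcal{X}_w}\bigr)/n$, and by Lemma \ref{principal bundle pushforward} this pushforward is $F_Q\times_Q H^0(\overline{QwP}/P,M_{n\lambda})$, whose Harder--Narasimhan filtration is computed by Schieder's weight-filtration argument (Proposition \ref{thm_maxslop_schubert}); its maximal slope is $n\langle\deg(F_Q),w\lambda\rangle$ because the highest weights of the Demazure-type module $H^0(\overline{QwP}/P,M_{n\lambda})$ lie in $nw\lambda+\mathbb{Z}[\Delta_Q]$. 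If you want to keep a points-based argument, you would essentially have to reprove the hard direction of Ballaÿ's theorem by hand; I recommend instead invoking it directly and reducing your task to the representation-theoretic slope computation.
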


\begin{customthm}{\ref{thm of ht filtration}}
	For any $t \in \mathbb{R}$, let $Z_t \subseteq X$ be the Zariski closure of the set $\big\{ x \in X(\overline{K}): h_{\mathcal{L}_\lambda}(x) < t \big\}$. Then
	\begin{flalign*}
    \quad \quad \quad &
    Z_t = X \Bigg\backslash \coprod_{\langle \deg (F_{Q }), w\lambda \rangle\geq t} C_w = \coprod_{\langle \deg (F_{Q }), w\lambda \rangle<t} C_w.
    &&
  \end{flalign*}
\end{customthm}

\begin{proof}
	We write $\zeta_w = \langle \deg (F_{Q }), w\lambda \rangle$ and $h=h_{\mathcal{L}_\lambda}$ to simplify notations. For a closed subvariety $Y$ of $X$, let $Z_t(Y)$ be the Zariski closure in $Y$ of $\big\{ y \in Y(\overline{K}): h(y)<t \big\}$. By Proposition \ref{height lower bound on Schubert cells}, $Z_{\zeta_w}(X_w) \subseteq X_w \backslash C_w$, and we claim that $Z_{\zeta_w}(X_w) = X_w \backslash C_w=\bigcup_{w^\prime < w} X_{w^\prime}$.

    In fact, suppose conversely that $Z_{\zeta_w}(X_w) \subsetneq X_w \backslash C_w$. Then there exists $X_{w^\prime} \subsetneq X_w$ such that $Z_{\zeta_w}(X_w) \cap X_{w^\prime} \subsetneq X_{w^\prime}$. This forces $Z_{\zeta_w}(X_{w^\prime}) \subseteq Z_{\zeta_w}(X_w) \cap X_{w^\prime} \subsetneq X_{w^\prime}$. But on $X_{w^\prime}$, Proposition \ref{ess min on Schubert varieties} says the essential minimum is $\zeta_{w^\prime}<\zeta_w$, so $Z_{\zeta_w}(X_{w^\prime})=X_{w^\prime}$.  Contradiction! Thus on $X_w$, the set $Z_t(X_w)=X_w$ when $t>\zeta_w$ and $Z_t(X_w)=X_w \backslash C_w$ when $t \leq \zeta_w$.

    Let $w_0 \in W_Q \backslash W/W_P$ be the longest element. On $X=X_{w_0}$, when $t>\zeta_{w_0}$, $Z_t=X_{w_0}$ and when $t \leq \zeta_{w_0}$, $Z_t=X_{w_0} \backslash C_{w_0}=\bigcup_{w^\prime} X_{w^\prime}$ with $w^\prime<w_0$ and $\ell(w^\prime)=\ell(w_0)-1$. Repeating this procedure on each smaller $X_{w^\prime}$, we can complete the proof by induction.
\end{proof}

\subsection{Proof of Proposition \ref{height lower bound on Schubert cells}}
For $x\in X(K)$, let $\sigma_x : C \longrightarrow \mathcal{X}$ be the section induced by $x: \operatorname{Spec}(K) \longrightarrow X$ via valuative criterion, and let $F_{P,x}=\sigma_x^* F$ be the corresponding reduction  to $P$.

 \begin{lem} \label{height=degree}
For	$x \in X(K)$,  $h_{\mathcal{L}_\lambda}(x)=\langle \deg(F_{P,x}),\lambda \rangle$.
\end{lem}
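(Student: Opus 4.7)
The statement is essentially a chain of unwindings of definitions, so the plan is to set up the dictionary carefully and then just chase the identifications.

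First I would fix the notation. Since $P \subseteq G$ is parabolic, the morphism $\mathcal{X} = F/P \to C$ is proper, so the valuative criterion gives the section $\sigma_x : C \to \mathcal{X}$ extending the $K$-point $x : \operatorname{Spec}(K) \to X$, and this section is unique. The Zariski closure $\overline{\{x\}}$ in $\mathcal{X}$ is then precisely the image of $\sigma_x$, and $\deg(x) = 1$ because $x \in X(K)$. Therefore, by the projection formula for the closed immersion $\sigma_x : C \hookrightarrow \mathcal{X}$,
\begin{equation*}
h_{\mathcal{L}_\lambda}(x) \;=\; \overline{\{x\}} \cdot \mathcal{L}_\lambda \;=\; \deg_C(\sigma_x^* \mathcal{L}_\lambda).
\end{equation*}

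Next, I would verify compatibility of the associated-bundle construction with pullback. The line bundle $\mathcal{L}_\lambda = F \times_P k_\lambda$ on $\mathcal{X} = F/P$ is, tautologically, the line bundle on the base of the $P$-bundle $F \to F/P$ associated to $\lambda$. Pulling back a $P$-bundle commutes with forming associated bundles, so
\begin{equation*}
\sigma_x^* \mathcal{L}_\lambda \;=\; \sigma_x^*\bigl(F \times_P k_\lambda\bigr) \;\cong\; (\sigma_x^* F) \times_P k_\lambda \;=\; F_{P,x} \times_P k_\lambda,
\end{equation*}
where the last equality is just the definition $F_{P,x} = \sigma_x^* F$.

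Finally, I would invoke the definition of the cocharacter $\deg(F_{P,x}) \in X(P)^\vee$ introduced earlier in the section, namely $\langle \deg(F_{P,x}), \lambda \rangle = \deg_C(F_{P,x} \times_P k_\lambda)$. Combining the two displays gives
\begin{equation*}
h_{\mathcal{L}_\lambda}(x) \;=\; \deg_C(F_{P,x} \times_P k_\lambda) \;=\; \langle \deg(F_{P,x}), \lambda \rangle,
\end{equation*}
which is the claim. There is no real obstacle here: the only thing one must check carefully is the pullback-commutes-with-twist identity in the middle step, which is formal once one writes associated bundles as quotients of $F \times k_\lambda$ by the diagonal $P$-action.
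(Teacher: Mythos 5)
Your proof is correct and follows essentially the same route as the paper's: identify $h_{\mathcal{L}_\lambda}(x)$ with $\deg_C(\sigma_x^*\mathcal{L}_\lambda)$, use that pullback of the $P$-bundle $F \to F/P$ along $\sigma_x$ commutes with the associated-bundle construction to get $\sigma_x^*\mathcal{L}_\lambda = F_{P,x} \times_P k_\lambda$, and then apply the definition of $\langle \deg(F_{P,x}), \lambda\rangle$. The only difference is that you spell out the reduction to $\deg_C(\sigma_x^*\mathcal{L}_\lambda)$ in slightly more detail, which the paper treats as immediate from the definition of the height.
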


\begin{proof}
By definition, $h_{\mathcal{L}_\lambda}(x)$ is the degree of $\sigma_x^* (\mathcal{L}_\lambda)$ and $\langle \deg(F_{P,x}),\lambda \rangle$ is the degree of $F_{P,x} \times_{P } k_\lambda$. We have equality of line bundles \begin{flalign*}
	   \quad\quad\quad & \sigma_x^* (\mathcal{L}_\lambda) = \sigma_x^*(F \times_{P } k_\lambda) = (\sigma_x^*F) \times_{P } k_\lambda = F_{P,x} \times_{P } k_\lambda
    &
	\end{flalign*} 
 and the lemma follows by taking degree.
\end{proof}

\begin{definition}[relative position]
    A reduction $F_P$ of $F$ to $P$ is called \emph{in relative position $w\in W_Q\backslash W/ W_P$ with respect to} $F_{Q}$ if the image of the natural map 
    \begin{flalign*}
   \quad\quad\quad &   F_{Q} \times_C F_{P} \longrightarrow  G_C ,\quad (a,b)\longmapsto a^{-1}b&  
    \end{flalign*} lies in $QwP_C$. Here we implicitly apply the injections 
    \begin{flalign*}
        \quad\quad\quad &F_P\hookrightarrow F_P\times_P G\simeq F,\quad F_Q\hookrightarrow F_Q\times_Q G\simeq F.
        &
    \end{flalign*} 
\end{definition}
Note that by the universal property of the quotient stacks $[G//Q\times P]$ and $[QwP//Q\times P]$, this definition coincides with the one in  \cite[\textsection 4.1]{schieder_hardernarasimhan_2015}.
  
\begin{lem}\label{relative position}
	When $x \in C_w(K)$, $F_{P,x}$ is of relative position $w$ with respect to $F_{Q}$.
\end{lem}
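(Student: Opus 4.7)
My plan is to unwind the identifications coming from the reduction $F \cong F_Q \times_Q G$ and read off the relative position directly from the Bruhat decomposition. Concretely, I would use $F \cong F_Q \times_Q G$ to identify $\mathcal{X} = F/P$ with $F_Q \times_Q (G/P)$; the Bruhat decomposition $G/P = \coprod_{w \in W_Q \backslash W / W_P} QwP/P$ then induces the locally closed stratification $\mathcal{X} = \coprod_w \mathcal{C}_w$. Under this identification, the hypothesis $x \in C_w(K)$ is equivalent to saying that $\sigma_x : C \to \mathcal{X}$ sends the generic point of $C$ into the stratum $\mathcal{C}_w$.

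Next, I would compute the map $(a,b) \mapsto a^{-1}b$ in local coordinates. Pick an \'etale cover $U \to C$ that trivializes $F_Q|_U \cong U \times Q$ (and hence $F|_U \cong U \times G$). In these coordinates the section $\sigma_x|_U$ is represented by a morphism $g_x : U \to G$ with $\sigma_x(c) = (c,\, g_x(c) P)$, and the reduction $F_{P,x}|_U = \sigma_x^* F|_U$ becomes the subscheme $\{(c,\, g_x(c) p) : c \in U,\; p \in P\}$ of $U \times G$. A direct calculation then identifies the natural map $F_Q \times_C F_{P,x} \to G_C$, locally on $U$, with $((c, q),\, (c, g_x(c) p)) \mapsto (c,\; q^{-1} g_x(c) p)$, whose fiber over $c$ is the double coset $Q \cdot g_x(c) \cdot P \subseteq G$.

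Combining the two steps, the hypothesis $x \in C_w(K)$ translates into $g_x(\eta) \in QwP$, so the image of $F_Q \times_C F_{P,x} \to G_C$ lies in $QwP_C$ at the generic point of $C$, which patches to the required global statement after \'etale descent. The main technical point I expect to have to handle is interpreting ``image lies in $QwP_C$'' correctly --- in general the section $\sigma_x$ could specialize into a smaller Bruhat cell at special points of $C$, so the condition really has to be read in the generic sense of the definition following \cite{schieder_hardernarasimhan_2015}, as suggested by the note about quotient stacks immediately after the definition. Modulo this semantic point, the lemma reduces to the explicit local calculation above.
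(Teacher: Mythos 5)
Your proposal is correct and follows essentially the same route as the paper: both identify $F$ with $F_Q\times_Q G$, use the Bruhat decomposition $G=\coprod_w QwP$ to see that $\sigma_x^*F$ lands in $F_Q\times_Q QwP$, and then check that $(a,b)\mapsto a^{-1}b$ factors through $QwP_C$ --- the paper does this via the global factorization $F_Q\times_C F_{P,x}\to (F_Q\times_C F_Q)\times_Q QwP\to (C\times Q)\times_Q QwP=C\times QwP$ rather than in local \'etale coordinates, but the content is identical. Your closing caveat about reading ``image lies in $QwP_C$'' generically is well taken: the section $\sigma_x$ can indeed specialize into smaller cells at closed points of $C$, the paper's own proof elides this, and the definition must be understood in Schieder's generic sense (as the remark on quotient stacks after the definition indicates), which is all that is needed for Proposition \ref{Schieder 4.6}.
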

\begin{proof} Note that the injection $F_{P,x} = \sigma_x^*F \longrightarrow F$ has image in $F_Q \times_{Q} QwP$ by the commutativity of the following diagram
 \begin{flalign*}
    \xymatrix{
    \sigma_x^* F \ar[r] \ar[d] & F= \coprod F_Q \times_Q QwP \ar[d] \\
    C \ar[r]^{\sigma_x \quad \quad \quad \quad \quad} & F/P=\coprod F_Q \times_Q QwP/P.
    }
 \end{flalign*}
Now the lemma follows since $F_Q \times_C F_{P,x} \longrightarrow C \times G$ factors as 
\begin{align*}
    &F_Q \times_C F_{P,x} \longrightarrow  F_Q  \times_C \Big( F_Q \times_Q QwP \Big) \longrightarrow\\
    &\Big(F_Q \times_C F_Q\Big) \times_Q  QwP \longrightarrow (C \times Q) \times_Q QwP = C \times QwP.
\end{align*}
\end{proof}

\begin{prop} \label{Schieder 4.6}
    If $F_P$ is in relative position $w$ with respect to $F_Q$, then $\langle \deg(F_P),\lambda \rangle \geq \langle \deg(F_Q),w\lambda \rangle$  for any anti-dominant character $\lambda$.
\end{prop}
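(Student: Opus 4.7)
The plan is to reduce both $F_P$ and $F_Q$ to a common subgroup $H$ that encodes the relative position $w$, and thereby rewrite $\langle \deg(F_P),\lambda \rangle - \langle \deg(F_Q),w\lambda \rangle$ as the degree of a line bundle whose non-negativity follows from positivity properties of the canonical reduction $F_Q$.

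First, fix a representative $\dot{w} \in N_G(T)$ of $w$ and set $H = Q \cap \dot{w}P\dot{w}^{-1}$. The relative position assumption says that the map $F_Q \times_C F_P \to G_C$, $(a,b) \mapsto a^{-1}b$, factors through $QwP_C$. Using the natural identification of $QwP$ with $(Q \times P)/H$ (via the $H$-action $h \cdot (q,p) = (qh,\dot{w}^{-1}h^{-1}\dot{w}p)$), this factorization produces a reduction $F_H$ of $F$ to $H$, together with canonical isomorphisms $F_H \times_H Q \simeq F_Q$ and $(F_H \cdot \dot{w}) \times_{\dot{w}^{-1}H\dot{w}} P \simeq F_P$.

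Next, I would compute both degrees through $F_H$. On one hand, $F_P \times_P k_\lambda \simeq F_H \times_H k_{\lambda^{\prime}}$, where $\lambda^{\prime}$ is the restriction to $H$ of the character $w\lambda$ of $\dot{w}P\dot{w}^{-1}$ obtained from $\lambda$ by conjugation. On the other hand, extending $w\lambda \in X(T)_\mathbb{Q}$ to a rational character of $Q$ via the slope map and restricting to $H$ gives another character $\lambda^{\prime\prime}$, so $F_Q \times_Q k_{w\lambda} \simeq F_H \times_H k_{\lambda^{\prime\prime}}$. A root-space computation then identifies the difference $\chi := \lambda^{\prime} - \lambda^{\prime\prime}$ as a non-negative integer combination of positive roots of $G$ lying in the unipotent radical of $Q$; the antidominance of $\lambda$ is exactly what forces the signs in this combination to be correct.

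Finally, $\chi$ is a non-negative combination of simple roots which extends to a character of $Q$, so $\deg(F_H \times_H k_\chi) = \langle \deg(F_Q),\chi \rangle \geq 0$ by condition (2) in the definition of canonical reduction applied to $F_Q$. Combined with the degree identity $\langle \deg(F_P),\lambda \rangle - \langle \deg(F_Q),w\lambda \rangle = \deg(F_H \times_H k_\chi)$, this yields the desired inequality. The main obstacle is the root-space computation identifying $\chi$: conjugation by $\dot{w}$ twists characters between $P$ and $\dot{w}P\dot{w}^{-1}$, and the slope-map extension of $w\lambda$ to $Q$ must be compared with the direct restriction coming from $wPw^{-1}$, so the bookkeeping has to be done carefully to extract the antidominance hypothesis at exactly the right place and to verify that $\chi$ is indeed a character of $Q$ expressible with non-negative coefficients along simple roots.
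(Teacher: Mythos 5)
Your opening moves are sound and do match the start of Schieder's argument (the paper itself only cites \cite[\S 4.5.3]{schieder_hardernarasimhan_2015} here): the identification $QwP\simeq (Q\times P)/H$ with $H=Q\cap\dot wP\dot w^{-1}$ gives a common reduction $F_H$ with $F_H\times_HQ\simeq F_Q$ and $F_H\times_{\dot w^{-1}H\dot w}P\simeq F_P$, and the two identities $\langle\deg(F_P),\lambda\rangle=\langle\deg(F_H),\lambda'\rangle$, $\langle\deg(F_Q),w\lambda\rangle=\langle\deg(F_H),\lambda''\rangle$ are correct. The gap is in your identification of $\chi=\lambda'-\lambda''$ and, consequently, in the positivity input you invoke. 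By construction $\lambda''$ restricted to $T$ is the component of $w\lambda$ in $X(Q)_{\mathbb{Q}}$ under the decomposition $X(T)_{\mathbb{Q}}=X(Q)_{\mathbb{Q}}\oplus\mathbb{Q}[\Delta_Q]$, so $\chi|_T=w\lambda-\overline{w\lambda}\in\mathbb{Q}[\Delta_Q]$: it lies in the root lattice of the Levi $L_Q$, not in the span of the roots of $R_u(Q)$. In particular $\chi$ cannot be (the restriction of) a nonzero character of $Q$, since $X(Q)_{\mathbb{Q}}\cap\mathbb{Q}[\Delta_Q]=0$; the claimed identity $\deg(F_H\times_Hk_\chi)=\langle\deg(F_Q),\chi\rangle$ fails (the right-hand side is $0$ because the slope extension of $\deg(F_Q)$ annihilates $\mathbb{Q}[\Delta_Q]$, whereas the left-hand side genuinely depends on $F_P$); and condition (2) of the canonical reduction is therefore the wrong tool.

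The positivity that actually carries the proof is condition (1), the semistability of the Levi bundle $F_Q\times_QL_Q$. The image of $H$ in $L_Q$ is a parabolic subgroup of $L_Q$, so $F_H$ induces a parabolic reduction of $F_Q\times_QL_Q$, and $\langle\deg(F_H),\chi\rangle\geq 0$ is exactly a semistability inequality for that reduction; verifying that $\chi$ has the right sign relative to this parabolic is where the antidominance of $\lambda$ and the choice of minimal-length representative $\dot w$ in $W_QwW_P$ must enter. A minimal test case makes the gap concrete: take $G=\mathrm{GL}_2$, $P=B$, and $F=F(E)$ with $E$ semistable, so that $Q=G$ and condition (2) is vacuous. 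Then $\chi=\tfrac{a-b}{2}\alpha$ for $\lambda=(a,b)$ antidominant, and $\langle\deg(F_H),\chi\rangle\geq 0$ is precisely $(b-a)\bigl(\mu(E)-\deg L\bigr)\geq 0$ for the line subbundle $L$ corresponding to $F_P$, i.e.\ the semistability of $E$. Since your argument never uses condition (1), it cannot be completed along the stated lines without reworking the final two steps.
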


\begin{proof}
    This is written in the proof of \cite[Theorem 4.1]{schieder_hardernarasimhan_2015}. See \cite[\textsection 4.5.3]{schieder_hardernarasimhan_2015}.
\end{proof}

\begin{proof}[Proof of \emph{Proposition \ref{height lower bound on Schubert cells}}]
  Let $L$ be any finite field extension of $K$. Assume that $x \in C_w(L)$. Consider the following commutative diagram
    \[
    \xymatrix{
    X_L \ar[rr] \ar[dd] \ar[rd] && F_{C_L}/P \ar[rd] \ar[dd] \\
    & X \ar[dd] \ar[rr] && F/P \ar[dd] \\
    \operatorname{Spec}(L) \ar@/^1pc/[uu]^{\widetilde{x}} \ar[ru]^x \ar[rd] \ar[rr] && C_L \ar@/^1pc/[uu]^<<<<<<<<{\sigma_{\widetilde{x}}} \ar[ur]^{\sigma_x} \ar[rd] \\
    & \operatorname{Spec}(K) \ar[rr] && C.
    } \]
    where $C_L$ is the normalization of $C$ in $L$ and 
    $ F_{C_L}=F\times_C C_L$.
  Let   $\mathcal{L}_{C_L,\lambda}$ be the pullback of $\mathcal{L}_\lambda$ via the map $F_{C_L}/P \longrightarrow F/P$, which is also the line bundle associated to the character $\lambda$ of $P$ via the principal bundle $F_{C_L}$ on $C_L$. Then by Lemma \ref{height=degree},
  \begin{flalign*}
    \quad\quad\quad &
    h_{\mathcal{L}_\lambda}(x) = \tfrac1{[L:K]} \deg(\sigma_x^* \mathcal{L}_\lambda) = \tfrac1{[L:K]} \deg(\sigma_{\widetilde{x}}^* \mathcal{L}_{C_L,\lambda}) = \tfrac1{[L:K]} \langle \deg(F_{C_L,P,\widetilde{x}}),\lambda \rangle
    &
    \end{flalign*} 
where $F_{C_L,P,\widetilde{x}}$ is the reduction of $F_{C_L}$ to $P$ corresponding to the rational point $\widetilde{x} \in X_{L}(L)$.
    
    Note that the canonical reduction of $F_{C_L}$ is $F_{Q,C_L} = F_Q \times_C C_L$ and $\deg(F_{Q,C_L}) = [L:K] \deg(F_Q)$ in $X(T)_\mathbb{Q}^\vee$. By Lemma \ref{relative position},  $F_{Q,C_L}$ is of relative position $w$ with respect to $F_{C_L,P,\widetilde{x}}$. We conclude by applying Proposition \ref{Schieder 4.6} to $F_{C_L}$.
\end{proof}

\subsection{Proof of Proposition \ref{ess min on Schubert varieties}}
In this subsection, we employ Ballaÿ's theorem \cite[Theorem 1.2]{Ballay21} to compute the essential minimum. Let $\pi: F/P \longrightarrow C$ be the structure morphism. We start by finding the maximal slope of $\pi_* \mathcal{L}_{\lambda}|_{\mathcal{X}_w}$.

\begin{lem}[\cite{jantzen_representations_2007}, Part I, \textsection 5.18]
\label{principal bundle pushforward} 
On $\mathcal{X}_w=F_Q \times_Q \overline{QwP}/P\subseteq \mathcal{X}$, we have $\pi_* \big( \mathcal{L}_{\lambda} |_{\mathcal{X}_w} \big) = F_Q \times_Q H^0(\overline{QwP}/P,M_{\lambda})$.
\end{lem}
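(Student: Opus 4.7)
The plan is to reduce the computation to the standard fact that pushforward commutes with the associated-bundle construction, via faithfully flat (\'etale) descent along a local trivialization of $F_Q$. First I would rewrite the restricted line bundle as an associated bundle in a form where the fiber is a projective $Q$-variety carrying a $Q$-equivariant line bundle. Using the isomorphism $F \simeq F_Q \times_Q G$, we have
\begin{equation*}
\mathcal{L}_\lambda = F \times_P k_\lambda \simeq F_Q \times_Q (G \times_P k_\lambda),
\end{equation*}
so that on $\mathcal{X}_w = F_Q \times_Q \overline{QwP}/P$, the restriction $\mathcal{L}_\lambda|_{\mathcal{X}_w}$ is identified with $F_Q \times_Q M_\lambda$, where $M_\lambda$ denotes the restriction to $\overline{QwP}/P$ of the $G$-equivariant line bundle $G \times_P k_\lambda$ on $G/P$. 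Since $\overline{QwP}/P$ is a $Q$-stable closed subvariety of $G/P$, the line bundle $M_\lambda$ is $Q$-equivariant, and hence $H^0(\overline{QwP}/P, M_\lambda)$ inherits the structure of a finite-dimensional $Q$-representation.

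Next I would choose an \'etale cover $U \to C$ on which $F_Q$ is trivialized, so that $F_Q \times_C U \simeq U \times Q$ as principal $Q$-bundles. Under this trivialization one gets a Cartesian square identifying $\mathcal{X}_w \times_C U$ with $U \times \overline{QwP}/P$, and $\mathcal{L}_\lambda|_{\mathcal{X}_w}$ pulls back to the external product $\mathcal{O}_U \boxtimes M_\lambda$. Because $\overline{QwP}/P$ is projective over $k$, flat base change together with the projection formula (or equivalently, proper base change applied to the proper morphism $U \times \overline{QwP}/P \to U$) gives that the pushforward along the second projection equals $\mathcal{O}_U \otimes_k H^0(\overline{QwP}/P, M_\lambda)$.

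Finally, I would descend this identification from $U$ back to $C$. The gluing cocycle on the pullback is induced by the transition cocycle of $F_Q$ acting on $H^0(\overline{QwP}/P, M_\lambda)$ via the $Q$-equivariant structure of $M_\lambda$, which is exactly the descent datum that defines the associated vector bundle $F_Q \times_Q H^0(\overline{QwP}/P, M_\lambda)$ on $C$. I expect the main (and essentially only) technical point to be verifying that this compatibility of descent data is automatic; this boils down to tracing through the fact that the $Q$-action used to build $F_Q \times_Q M_\lambda$ is the same as the one used on global sections, which is a formal consequence of the $Q$-equivariance of $M_\lambda$. The rest is routine, and matches exactly the statement recorded in Jantzen, Part I \textsection 5.18.
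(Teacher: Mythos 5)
Your argument is correct and is essentially the same as what the paper relies on: the paper proves this lemma purely by citing Jantzen, Part~I, \textsection 5.18, whose proof is exactly the local-trivialization-plus-descent computation you outline (rewrite $\mathcal{L}_\lambda|_{\mathcal{X}_w}$ as $F_Q\times_Q M_\lambda$, trivialize $F_Q$ locally, apply flat base change over the trivializing cover, and glue via the $Q$-action on $H^0(\overline{QwP}/P,M_\lambda)$). No gaps.
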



    Schieder computed the Harder-Narasimhan filtration of $F_Q \times_Q H^0(\overline{QwP}/P,M_{\lambda})$ in \cite[\textsection 5.1]{schieder_hardernarasimhan_2015}. To state his result, we fix the following notations which will be used frequently.
    \begin{definition}
        Let $V$ be a finite dimensional $Q$-representation over $k$ with weight decomposition $V=\bigoplus_{\mu \in X(T)} V[\mu]$. The $Q$-filtration $V_{\bullet,\deg(F_Q)}$ of $V$ is defined by
\begin{flalign*}
    \quad \quad \quad &
    V_{t,\deg(F_Q)} = \bigoplus_{\langle \deg(F_Q),\mu \rangle \geq t} V[\mu].
    &&
\end{flalign*}
    \end{definition}

\begin{prop} \label{thm_maxslop_schubert}
    The Harder-Narasimhan filtration of $F_Q \times_Q H^0(\overline{QwP}/P,M_\lambda)$ is $F_Q \times_Q H^0(\overline{QwP}/P,M_\lambda)_{\bullet,\deg(F_Q)}$. Moreover, the maximal slope is $\langle \deg (F_{Q }), w\lambda \rangle$. 
\end{prop}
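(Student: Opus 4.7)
The plan is to adapt Schieder's Harder-Narasimhan computation in \cite[\textsection 5.1]{schieder_hardernarasimhan_2015} and then extract the maximum weight.

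First, I would verify that $V_{\bullet,\deg(F_Q)}$ is a filtration by $Q$-subrepresentations of $V = H^0(\overline{QwP}/P, M_\lambda)$. The cocharacter $\deg(F_Q) \in X(Q)^\vee_\mathbb{Q}$ extends $W_Q$-invariantly to $X(T)^\vee_\mathbb{Q}$ via the slope map, so in particular it vanishes on every root of $L_Q$; this makes each level set $\{\mu : \langle \deg(F_Q), \mu \rangle \geq t\}$ stable under $L_Q$. The canonical reduction property says $\langle \deg(F_Q), \gamma \rangle > 0$ for every non-trivial $\gamma \in X(Q)$ that is a non-negative combination of simple roots; averaging over the $W_Q$-orbit of any positive root $\alpha$ of $U_Q$ yields such a $\gamma$, so $\langle \deg(F_Q), \alpha \rangle > 0$, and the $U_Q$-action preserves each filtration subspace $V_{t,\deg(F_Q)}$.

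Next, to identify $V_{\bullet,\deg(F_Q)}$ with the Harder-Narasimhan filtration of $F_Q \times_Q V$, I would check that (i) each $F_Q \times_Q \mathrm{gr}_t V$ is semistable of slope $t$, and (ii) these slopes are strictly decreasing. Point (ii) is automatic from the construction. For (i), decompose $\mathrm{gr}_t V$ according to the characters of $Z(L_Q)^\circ$ that appear; each summand is an $L_Q$-representation on which the center acts by a single character of $\deg(F_Q)$-pairing $t$, and by Ramanan-Ramanathan the twist of the semistable Levi bundle $F_Q \times_Q L_Q$ yields a semistable vector bundle of slope $t$. The direct sum $\mathrm{gr}_t V$ is therefore semistable of slope $t$ as well.

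For the maximal slope claim, it suffices to verify that $w\lambda$ maximizes $\langle \deg(F_Q), \cdot \rangle$ among the $T$-weights of $V$. The $T$-fixed point $wP/P \in QwP/P$ contributes a line of weight $w\lambda$ via fiber evaluation. Around $wP/P$ the $Q$-orbit $QwP/P$ is an affine cell whose coordinate functions have $T$-weights $-\beta_i$, where $\beta_i$ runs through $\Phi(\mathfrak{q}) \setminus \Phi(\mathfrak{q} \cap \mathrm{Ad}(w)\mathfrak{p})$. A local monomial expansion of a section shows every $T$-weight of $V$ has the form $w\lambda - \sum_i n_i \beta_i$ with $n_i \geq 0$. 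Since each $\beta_i \in \Phi(\mathfrak{q}) \subseteq \Phi(L_Q) \cup \Phi(U_Q)$ pairs non-negatively with $\deg(F_Q)$ (zero on $\Phi(L_Q)$ by $W_Q$-invariance, positive on $\Phi(U_Q)$ by the canonical reduction), we obtain $\langle \deg(F_Q), w\lambda - \sum n_i \beta_i \rangle \leq \langle \deg(F_Q), w\lambda \rangle$, with equality at the weight $w\lambda$ itself.

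The main obstacle is step (i): one must invoke Ramanan-Ramanathan in the correct form for semistable principal bundles over curves and verify that when several $T$-weights with distinct $Z(L_Q)^\circ$-restrictions happen to share a common $\deg(F_Q)$-pairing, the resulting isotypic pieces still combine into a semistable bundle of the expected slope. The local expansion at $wP/P$ and the vanishing of $\deg(F_Q)$ on $\Phi(L_Q)$ are standard and essentially recorded in Schieder's paper.
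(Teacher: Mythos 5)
Your proposal is correct, and for the second assertion it takes a genuinely different route from the paper. For the identification of the Harder--Narasimhan filtration the paper simply cites Schieder's Proposition 5.1, remarking that his proof extends verbatim from restrictions of $G$-representations to arbitrary $Q$-representations; your sketch via Ramanan--Ramanathan on the $Z(L_Q)^\circ$-isotypic pieces is precisely the content of that proof, and the point you flag as the ``main obstacle'' is not one: a direct sum of semistable bundles of equal slope is semistable, so the isotypic pieces combine for free. For the maximal slope, the paper argues representation-theoretically: it reduces to $Q=B$, where the Demazure module $H^0(\overline{BwP}/P,M_\lambda)$ has unique highest weight $w\lambda$, and passes to general $Q$ using surjectivity of the restrictions to $H^0(\overline{Bw'P}/P,M_\lambda)$ and injectivity of the diagonal map, so that all highest weights lie in $w\lambda+\mathbb{Z}[\Delta_Q]$. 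You instead expand sections formally at the $T$-fixed point $wP/P$; this is valid because $\overline{QwP}/P$ is integral and smooth at $wP/P$ (the point lies in the open $Q$-orbit), so the $T$-equivariant Taylor expansion is injective and every weight has the form $w\lambda-\sum n_i\beta_i$ with $\beta_i\in\Phi(\mathfrak{q})$. One phrase to correct: $QwP/P\cong Q/(Q\cap wPw^{-1})$ is not an affine cell in general (take $Q=G$, $P=B$); what your argument actually uses is only the tangent-weight computation at $wP/P$, which is fine. Both routes rest on the same positivity, namely $\langle\deg(F_Q),\alpha\rangle\geq 0$ for positive roots with equality exactly on $\Phi(L_Q)$; your local argument is more self-contained and geometric, while the paper's is shorter because it outsources the weight bound to the theory of Demazure modules.
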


\begin{proof}
The first assertion is a slight generalization of \cite[Proposition 5.1]{schieder_hardernarasimhan_2015},  whose original  texts only consider  $G$-representations restricted to $Q$, but the proof actually works  for arbitrary  $Q$-representations. For the second assertion, we only need to show  the highest weights in $H^0(\overline{QwP}/P,M_\lambda)$
    belong to $w\lambda+\mathbb{Z}[\Delta_Q]$.
   
When $Q=B$, $H^0(\overline{Bw P}/P,M_\lambda)$ has a single highest weight $w\lambda$   by the theory of Demazure modules \cite[Chapter 14]{jantzen_representations_2007}. For general $Q$,  let $\pi: W/W_P \longrightarrow W_Q \backslash W/W_P$ be the canoniacl projection. Note that
    \begin{flalign*}
        \quad \quad \quad &
        \overline{QwP}/P=\bigcup_{w^{\prime} \in \pi^{-1}(w)}\overline{Bw^{\prime}  P}/P .
        &&
    \end{flalign*} Since the restriction map
 \begin{flalign*}
    \quad \quad \quad &
        H^0(\overline{Qw P}/P,M_\lambda) \longrightarrow  H^0(\overline{Bw^{\prime} P}/P,M_\lambda) 
        &&
    \end{flalign*}
    is surjective for each $w^{\prime} \in \pi^{-1}(w)$ and the diagonal map
    \begin{flalign*}
    \quad \quad \quad &
        H^0(\overline{Qw P}/P,M_\lambda) \longrightarrow \prod_{w^{\prime} \in \pi^{-1}(w)} H^0(\overline{Bw^{\prime} P}/P,M_\lambda) 
        &&
    \end{flalign*}
 is injective, the highest weights in $H^0(\overline{QwP}/P,M_\lambda)$
    belong to $w\lambda+\mathbb{Z}[\Delta_Q]$.
\end{proof}

\begin{proof}[Proof of \emph{Proposition \ref{ess min on Schubert varieties}}]
By Ballaÿ's theorem \cite[Theorem 1.2]{Ballay21},
\begin{flalign*}
    \quad \quad \quad 
\zeta_{\operatorname{ess}}(X_w) & =\displaystyle \lim_{n \rightarrow \infty} \frac{\mu_{\max} (\pi_* \mathcal{L}_{n\lambda}|_{\mathcal{X}_w})}{n}\\
& = \lim_{n \rightarrow \infty} \frac{n\langle \deg (F_{Q }), w\lambda \rangle}{n} = \langle \deg (F_{Q }), w\lambda \rangle.
    &&
\end{flalign*}
\end{proof}

\section{String polytopes and the Boucksom-Chen concave transform}
Keep the notations in \textsection\ref{section_ ht fil and succ min}. Let $w_0 \in W$ be the longest Weyl element and fix a reduced decomposition $\underline{w_0}= s_{i_1} s_{i_2} \cdots s_{i_N}$, $N=\dim G/B$. Then for the antidominant weight $\lambda: P \longrightarrow \mathbb{G}_m$, we have the string valuation $\nu$ and the string polytope $\Delta_\lambda \subseteq \mathbb{R}^N$, which is a polytope of dimension $d=\dim G/P$.
Let $\mathcal{P}_\lambda$ be the weight polytope and $p: \Delta_\lambda \longrightarrow \mathcal{P}_\lambda$ be the weight map.

The main theorem of this section is the following:

\begin{thm} \label{concave transform}
	The Boucksom-Chen concave transform of ${\mathcal{L}_\lambda}$ (with respect to a specific valuation) is $\Delta_\lambda \longrightarrow \mathbb{R}, x \longmapsto \langle \deg(F_Q),p(x) \rangle$.
\end{thm}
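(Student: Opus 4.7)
The plan is to match the Boucksom--Chen concave transform with the function $x \mapsto \langle \deg(F_Q), p(x) \rangle$ by expressing both inputs of the construction in Lie-theoretic terms: the filtration on global sections becomes the $T$-weight filtration of $H^0(G/P, M_{n\lambda})$, and the string valuation becomes the inverse of the weight map on a basis of weight vectors.

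First I recall how the Boucksom--Chen concave transform is computed in this geometric setting. The model $\pi: \mathcal{X}=F/P \to C$ equips $\mathcal{L}_\lambda$ with an adelic structure whose associated filtration on $H^0(X, L^{\otimes n})$ is
\[
\mathcal{F}^t H^0(X, L^{\otimes n}) = H^0\bigl(C, (\pi_*\mathcal{L}_\lambda^{\otimes n})^{\geq nt}\bigr) \otimes_k K,
\]
where $(\pi_*\mathcal{L}_\lambda^{\otimes n})^{\geq nt}$ is the maximal Harder--Narasimhan subbundle whose minimal slope is at least $nt$. Combined with the string valuation $\nu$ on $k(X)$, this data determines $\varphi: \Delta_\lambda \to \mathbb{R}$ as the unique function with superlevel sets $\{\varphi \geq t\}$ equal to the Okounkov body of the graded linear subsystem $\bigoplus_n \mathcal{F}^{nt} H^0(X, L^{\otimes n})$.

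Next I identify both inputs. By Lemma \ref{principal bundle pushforward} together with the argument behind Proposition \ref{thm_maxslop_schubert} (which in fact computes the full Harder--Narasimhan filtration, not only the maximal slope),
\[
(\pi_*\mathcal{L}_\lambda^{\otimes n})^{\geq t} = F_Q \times_Q \bigoplus_{\langle \deg(F_Q), \mu \rangle \geq t} H^0(G/P, M_{n\lambda})[\mu].
\]
Hence $\mathcal{F}^{nt} H^0(X, L^{\otimes n})$ is spanned, after a generic trivialization of $F_Q$ (which exists by Tsen's theorem), by those $T$-weight vectors $s \in H^0(G/P, M_{n\lambda})$ with $\langle \deg(F_Q), \mathrm{wt}(s) \rangle \geq nt$. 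On the other hand, the string polytope is designed \cite{Littelmann_1998, Bernstein-Zelevinsky} so that every lattice point $v \in n\Delta_\lambda \cap \mathbb{Z}^N$ gives a canonical $T$-weight vector $s_v \in H^0(G/P, M_{n\lambda})$ satisfying $\nu(s_v) = v$ and $\mathrm{wt}(s_v) = p(v)$, and these $s_v$ form a basis.

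Combining the two descriptions, the Okounkov body of $\mathcal{F}^{nt} H^0(X, L^{\otimes n})$, rescaled by $1/n$ and passed to the limit $n \to \infty$, is
\[
\{ x \in \Delta_\lambda : \langle \deg(F_Q), p(x) \rangle \geq t \},
\]
which is exactly the superlevel set of $x \mapsto \langle \deg(F_Q), p(x)\rangle$ at $t$, proving the theorem. The main obstacle I expect is the weight-compatibility asserted above: verifying that the string valuation $\nu$ and the weight map $p$ interact via $p(\nu(s_v)) = \mathrm{wt}(s_v)$ on a basis of weight vectors. This is built into the standard construction of string parametrizations, but it requires care with sign conventions (the antidominance of $\lambda$, the role of the longest element $w_0$ in the chosen reduced decomposition, and whether one works with $V_\lambda$ or its dual), together with a check that passing from $X=(F/P)_K$ back to $G/P$ via the generic trivialization of $F_Q$ preserves the $T$-weight decomposition on global sections.
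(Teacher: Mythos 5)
Your proposal follows essentially the same route as the paper: identify the Harder--Narasimhan filtration of $\pi_*\mathcal{L}_{n\lambda}$ with the weight filtration $H^0(G/P,M_{n\lambda})_{\bullet,\deg(F_Q)}$ (Proposition \ref{thm_maxslop_schubert}, which indeed gives the full filtration and not just the maximal slope), trivialize $F_Q$ at the generic point, and use the weight-compatibility of the dual crystal basis under the string parametrization to read off the Okounkov bodies of the superlevel subsystems as $\{x\in\Delta_\lambda:\langle\deg(F_Q),p(x)\rangle\geq t\}$. The only cosmetic discrepancy is that the filtration should be the generic fibre of the subbundle $(\pi_*\mathcal{L}_{n\lambda})^{\geq nt}$ rather than $H^0\bigl(C,(\pi_*\mathcal{L}_{n\lambda})^{\geq nt}\bigr)\otimes_k K$; these agree up to an $O(1)$ shift in $t$ and yield the same concave transform in the limit.
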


Let $\mu$ be the Lebesgue measure on the real span of $\Delta_\lambda$, normalized with respect to $\mathbb{Z}^N$. As a direct corollary, we have by \cite{BouChen11} that
\begin{flalign*}
	\quad \quad \quad &
	h_{\mathcal{L}_\lambda}(X) = \frac{ \mathcal{L}_\lambda^{\dim \mathcal{X}} } {\dim \mathcal{X} \cdot L^{\dim X}} = \frac1{\operatorname{Vol}(\Delta_\lambda)} \int_{\Delta_\lambda} \langle \deg(F_Q),p(x) \rangle \, \mathrm{d}\mu(x).
	&&
\end{flalign*}

The integration can be computed via Weyl symmetry:
\begin{thm} \label{Zhang type equality}
    Let $q: W/W_P \longrightarrow W_Q \backslash W/W_P$ be the canoniacl projection. For $w \in W_Q \backslash W/W_P$, let $m_w=\# q^{-1}(w)$. Then 
   \begin{flalign*}
       \quad \quad \quad &
        h_{\mathcal{L}_\lambda} (X) = \frac{1}{\#W/W_P} \sum_{w\in W_Q \backslash W/W_P} m_w \langle \deg(F_Q),w\lambda \rangle .
       &&
   \end{flalign*}
\end{thm}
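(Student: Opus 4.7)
The plan is to express both sides as values of the linear functional $\langle \deg(F_Q),\cdot\rangle$ at a common point of the weight polytope and to identify that point via Weyl symmetry. Combining Theorem \ref{concave transform} with the Boucksom-Chen integral formula stated just before the theorem gives
$$h_{\mathcal{L}_\lambda}(X)=\frac{1}{\operatorname{Vol}(\Delta_\lambda)}\int_{\Delta_\lambda}\langle \deg(F_Q),p(x)\rangle\,d\mu(x)=\langle \deg(F_Q),c\rangle,$$
where $c:=\frac{1}{\operatorname{Vol}(\Delta_\lambda)}\int_{\Delta_\lambda}p(x)\,d\mu(x)\in\mathcal{P}_\lambda$ is the centroid of the normalized pushforward probability measure $p_*\mu/\operatorname{Vol}(\Delta_\lambda)$.

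The key input is that $p_*\mu$ is $W$-invariant. By the Littelmann--Berenstein--Zelevinsky theory of string polytopes, the lattice points of $n\Delta_\lambda$ lying over a weight $\mu$ index a basis of the weight space $V_{n\lambda}[\mu]$, so the asymptotic pushforward $p_*\mu$ records the weight distribution of the irreducible representations $V_{n\lambda}$. Since weight multiplicities satisfy the classical symmetry $\dim V_{n\lambda}[\mu]=\dim V_{n\lambda}[w\mu]$, the measure $p_*\mu$ is Weyl-invariant; in particular $c$ is a $W$-fixed vector.

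To identify $c$, decompose $X(T)_\mathbb{R}=X(T)^W_\mathbb{R}\oplus\mathbb{R}[\Delta]$ as $W$-modules. Because $w\lambda-\lambda\in\mathbb{Z}[\Delta]\subset\mathbb{R}[\Delta]$ for every $w\in W$, the polytope $\mathcal{P}_\lambda=\operatorname{conv}(W\lambda)$ is contained in the affine subspace $\lambda+\mathbb{R}[\Delta]$, so $c\in\lambda+\mathbb{R}[\Delta]$. Together with $c\in X(T)^W_\mathbb{R}$, this forces $c$ to be the projection of $\lambda$ onto $X(T)^W_\mathbb{R}$. The Weyl average $\frac{1}{|W|}\sum_{w\in W}w\lambda$ satisfies the same two properties and so equals the same point; since $W_P$ stabilizes $\lambda$, we conclude
$$c=\frac{1}{|W/W_P|}\sum_{v\in W/W_P}v\lambda.$$

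Substituting into $h_{\mathcal{L}_\lambda}(X)=\langle \deg(F_Q),c\rangle$ and grouping the sum over $W/W_P$ by the fibers of $q:W/W_P\to W_Q\backslash W/W_P$ gives the theorem: since $\deg(F_Q)$ annihilates $\mathbb{Z}[\Delta_Q]$, the pairing $\langle\deg(F_Q),v\lambda\rangle$ depends only on $q(v)$, so each fiber $q^{-1}(w)$ of cardinality $m_w$ contributes $m_w\langle\deg(F_Q),w\lambda\rangle$. The main obstacle is justifying the Weyl invariance of $p_*\mu$, which requires the representation-theoretic interpretation of the string polytope; the rest of the argument is elementary linear algebra on $\mathcal{P}_\lambda$.
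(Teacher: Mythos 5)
Your proof is correct and follows essentially the same route as the paper's: the key input in both is the $W$-invariance of the pushforward measure $p_*\mu$, established from the Weyl-invariance of weight multiplicities via the crystal-basis parametrization of the lattice points of the string polytope. The only difference is in the endgame and is cosmetic: where you identify the barycenter of $p_*\mu$ as the projection of $\lambda$ onto $X(T)^W_\mathbb{R}$ along $\mathbb{R}[\Delta]$, the paper integrates over a fundamental domain for $W/W_P$ and invokes the constancy of $\mu\longmapsto\sum_{w\in W}w\mu$ on $\mathcal{P}_\lambda$ (proved by reducing to the simply connected cover) --- two equivalent ways of exploiting the same symmetry.
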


Note that $\zeta_w=\langle \deg(F_Q),w\lambda \rangle$ are successive minima of the height function $h_{\mathcal{L}_\lambda}$ on $X$ by Theorem \ref{thm of ht filtration}. Thus this theorem builds an equality between the variety height and a weighted average of the successve mimina. One may recognize this as a refinement of Zhang's inequality of successive minima.

\subsection{Okounkov bodies and the Boucksom-Chen concave transform}
Let $\mathbf{k}$ be a field and $Y$ be a normal projective variety of dimension $d$ over $\mathbf{k}$. Let $L$ be a big line bundle on $Y$, $V_n=H^0(Y,nL)$ and $V_\bullet = \bigoplus_{n \geq 0} V_n$. Let $\nu: V_\bullet \backslash \{0\} \longrightarrow \mathbb{Z}^r$ be a \emph{valuation with one dimensional leaves}, i.e.
\begin{itemize}
	\item for $a \in \mathbf{k}^\times$ and $s \in V_\bullet \backslash \{0\}$, we have $\nu(as)=\nu(s)$.
	\item for $s_1,s_2 \in V_\bullet \backslash \{0\}$, we have $\nu(s_1s_2)=\nu(s_1)+\nu(s_2)$ and $\nu(s_1+s_2) \geq \min\big\{ \nu(s_1),\nu(s_2) \big\}$.
	\item the quotient space $\big\{ s \in H^0(Y,nL): \nu(s) \geq a \big\} \Big/ \big\{ s \in H^0(Y,nL): \nu(s) > a \big\}$ is at most one dimensional.
\end{itemize}

Let $W_\bullet \subseteq V_\bullet$ be a graded subalgebra. We define $\Delta_\nu(W_\bullet) \subseteq \mathbb{R}^r$ to be the closure of the set $\bigcup_{n \geq 1} \tfrac1n \nu(W_n \backslash \{0\})$.
In particular, we define the \emph{Okounkov body} $\Delta_{\nu}(Y,L):=\Delta_\nu(V_\bullet)$.

A fundamental theorem is $\operatorname{Vol}(L) = d! \operatorname{Vol}(\Delta_\nu(Y,L))$ \cite[Theorem 1.15]{Kaveh_2015}, where the left hand side is the volume of the line bundle $L$ and the right hand side is the volume of the polytope $\Delta_\nu(Y,L)$.

Let $\big\{ V_\bullet^t: t \in \mathbb{R} \big\}$ be a decreasing $\mathbb{R}$-filtration of $V_\bullet$ that is \emph{multiplicative}, i.e. $V_{n_1}^{t_1} \cdot V_{n_2}^{t_2} \subseteq V_{n_1+n_2}^{t_1+t_2}$.
The \emph{Boucksom-Chen concave transform} associated to $\left(V_\bullet,\nu,\big\{ V_\bullet^t \big\} \right)$ is the function $G: \Delta_\nu(V_\bullet) \longrightarrow \mathbb{R} \cup \{-\infty\}$, $x \longmapsto \sup\big\{ t \in \mathbb{R}: x \in \Delta_\nu(V_\bullet^t) \big\}$ \cite{BouChen11}. $G$ is concave, as indicated in the name. In particular, $G$ is continuous in the interior.

\subsection{String polytopes and weight polytopes}
Let $w_0 \in W$ be the longest Weyl element and fix a reduced decomposition $\underline{w_0}= s_{i_1} s_{i_2} \cdots s_{i_N}$, $N=\dim G/B$. In \cite{Littelmann_1998,Bernstein-Zelevinsky}, the authors constructed the \emph{string valuation} $\nu=\nu_{\underline{w_0}}: H^0(G/P,nM_\lambda) \backslash \{0\} \longrightarrow \mathbb{Z}^N$. The image $\nu(H^0(G/P,nM_\lambda)\backslash\{0\})=\big\{v_1,\dots,v_l\big\}$ is in bijection (the \emph{string parametrization}) with the so-called \textit{dual crystal basis} $\big\{ e_{v_1}^{(n\lambda)},\cdots,e_{v_l}^{(n\lambda)} \big\}$ of $H^0(G/P,nM_\lambda)$ (see \cite[\textsection 3]{Kaveh_2015} for a detailed exposition). Moreover, the dual crystal basis is compatible with the weight decomposition
\begin{flalign*}
    \quad \quad \quad &
H^0(G/P,nM_\lambda)=\bigoplus_{\mu\in X(T)} H^0(G/P,nM_\lambda)[\mu],
    &&
\end{flalign*}
that is, for each $i=1,\dots,l$, $e_{v_i}^{(n\lambda)}\in H^0(G/P,nM_\lambda)[\mu]$ for some $\mu$.

The \emph{string polytope} $\Delta_\lambda$ is the Okounkov body $\Delta_\nu(G/P,M_\lambda)$, i.e. it is the closure of the set $\bigcup_{n \geq 1} \tfrac1n \nu(H^0(G/P,nM_\lambda) \backslash \{0\})$. Note that $\Delta_{\lambda} \cap \tfrac1n \mathbb{Z}^N$ is in bijection with the dual crystal basis of $H^0(G/P,nM_\lambda)$.

The \emph{weight polytope} $\mathcal{P}_\lambda \subseteq X(T)_\mathbb{R}$ is the convex hull of the Weyl orbit $W\lambda$ in $X(T)_\mathbb{R}$. Note that $\mathcal{P}_\lambda \cap \tfrac{1}{n} X(T)$ is in bijection with weights apprearing in $H^0(G/P,nM_\lambda)$. By taking weights, one has  \emph{weight map} $p: \Delta_\lambda \longrightarrow \mathcal{P}_\lambda$ which is affine and sends $\Delta_{\lambda} \cap \tfrac1n\mathbb{Z}^N$ to $\mathcal{P}_\lambda \cap \tfrac1n X(T)$ .

\subsection{Proof of Theorem \ref{concave transform}}
Let $V_n=H^0(G/P,nM_\lambda)$ and $V_\bullet = \bigoplus_{n \geq 0} V_n$. For $t \in \mathbb{R}$, let $V_n^t = H^0(G/P,nM_\lambda)_{t,\deg(F_Q)}$ as in Proposition \ref{thm_maxslop_schubert}, and $V_\bullet^t = \bigoplus_{n \geq 0} V_n^{nt}$. Let $\nu: V_\bullet \backslash \{0\} \longrightarrow \mathbb{Z}^N$ be the string valuation.
Let $\varphi: \Delta_\nu(G/P,M_\lambda) = \Delta_\lambda \longrightarrow \mathbb{R}$ be the Boucksom-Chen concave transform associated to $\left(V_\bullet,\nu,\big\{ V_\bullet^t \big\} \right)$.

\begin{thm}
	$\varphi(x)=\langle \deg(F_Q),p(x) \rangle$.
\end{thm}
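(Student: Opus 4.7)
The plan is to identify each Okounkov body $\Delta_\nu(V_\bullet^t)$ with the explicit polytope $\{x \in \Delta_\lambda : \langle \deg(F_Q), p(x) \rangle \geq t\}$, and then read off $\varphi$ pointwise from the defining supremum. Three inputs from earlier in the excerpt will be decisive: (i) the string valuation $\nu$ has one-dimensional leaves and its image on $V_n \setminus \{0\}$ parametrizes the dual crystal basis $\{e_v^{(n\lambda)}\}_v$ of $V_n = H^0(G/P, nM_\lambda)$; (ii) this basis is compatible with the weight decomposition, so each $e_v^{(n\lambda)}$ belongs to a single weight space $V_n[\mu_v]$; and (iii) the affine weight map satisfies $p(v/n) = \mu_v/n$.

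The main computation proceeds as follows. Fix $n$ and $t$. Because $V_n^{nt} = \bigoplus_{\langle \deg(F_Q), \mu \rangle \geq nt} V_n[\mu]$ is a sum of weight spaces and the crystal basis refines this direct sum, the set $\{e_v^{(n\lambda)} : \langle \deg(F_Q), p(v/n) \rangle \geq t\}$ is a basis of $V_n^{nt}$. Expanding any nonzero $s \in V_n^{nt}$ in this basis, every nonzero coefficient must already be supported on an index of that form, so by the one-dimensional leaves property $\nu(s) = v$ with $\langle \deg(F_Q), p(v/n) \rangle \geq t$. Combining with the obvious reverse inclusion gives
\[
\tfrac{1}{n}\,\nu(V_n^{nt} \setminus \{0\}) = \bigl\{ x \in \tfrac{1}{n}\mathbb{Z}^N \cap \Delta_\lambda : \langle \deg(F_Q), p(x) \rangle \geq t \bigr\}.
\]
Taking closures of the union over $n$, and invoking continuity of the affine function $\langle \deg(F_Q), p(\cdot) \rangle$ together with the density of the crystal-basis lattice points in $\Delta_\lambda$, yields
\[
\Delta_\nu(V_\bullet^t) = \bigl\{ x \in \Delta_\lambda : \langle \deg(F_Q), p(x) \rangle \geq t \bigr\}.
\]
The theorem then follows directly from the definition of $\varphi$, since
\[
\varphi(x) = \sup\bigl\{ t : x \in \Delta_\nu(V_\bullet^t) \bigr\} = \sup\bigl\{ t : \langle \deg(F_Q), p(x) \rangle \geq t \bigr\} = \langle \deg(F_Q), p(x) \rangle.
\]

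The subtle point I expect to be the main obstacle is the first displayed identity: one has to verify that when a section $s = \sum c_v e_v^{(n\lambda)}$ lies in $V_n^{nt}$, the coefficients $c_v$ attached to crystal basis vectors outside $V_n^{nt}$ automatically vanish, not merely that $s$ admits some expression using only basis vectors inside $V_n^{nt}$. This is exactly what the compatibility of the dual crystal basis with the weight decomposition buys us: weight spaces intersect trivially and each $e_v^{(n\lambda)}$ lies in a unique weight space, so projection to a weight space annihilates every crystal basis vector outside it. Once this bookkeeping is in place, the remaining closure and density steps are the standard ones in the Okounkov-body formalism, and the Boucksom-Chen transform can be unwound formally.
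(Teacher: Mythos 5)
Your proposal is correct and follows essentially the same route as the paper: identify $V_n^{nt}$ with the span of the dual crystal basis vectors whose weights satisfy $\langle \deg(F_Q),\mu\rangle \geq nt$, deduce that $\Delta_\nu(V_\bullet^t)=\{x\in\Delta_\lambda: \langle\deg(F_Q),p(x)\rangle\geq t\}$ by taking closures of the corresponding lattice points, and unwind the supremum defining $\varphi$. The only difference is that you spell out the valuation-theoretic bookkeeping (distinct leaves forcing $\nu(s)$ to land on an index inside $V_n^{nt}$) that the paper leaves implicit.
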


	
	

\begin{proof}
$V_n^{nt} = \bigoplus_\mu V_n[n\mu]$ for $\mu \in \mathcal{P}_{\lambda} \cap \tfrac1n X(T)$ with $\langle \deg(F_Q),n\mu\rangle\geq nt$, and hence $V_n^{nt} = \bigoplus_x k \cdot e_{nx}^{(n\lambda)}$ for $x \in \Delta_{\lambda}\cap \tfrac1n \Z^N$ with $\langle \deg(F_Q),p(nx)\rangle\geq nt$.
Therefore by definition,
\begin{flalign*}
    \quad \quad \quad
    \Delta(V_\bullet^t) &= \text{closure of } \bigcup\nolimits_n \big\{x\in \Delta_{\lambda}\cap \tfrac1n \Z^N: \langle\deg(F_Q),p(x)\rangle\geq t\big\} \\
    &= \big\{x\in \Delta_\lambda : \langle \deg(F_Q),p(x)\rangle\geq t\big\}
    &&
\end{flalign*} which concludes the proof.
\end{proof}

Consider the graded linear series $\widetilde{V}_\bullet = \bigoplus_{n \geq 0} H^0(X,nL)$. Fix an isomorphism $F_{Q,K} \simeq Q_K$. Then
\begin{flalign}
	\quad \quad &
	\widetilde{V}_\bullet = \bigoplus_{n \geq 0} \Big( F_Q \times_Q H^0(G/P,nM_\lambda) \Big)_K \simeq \bigoplus_{n \geq 1} H^0(G/P,nM_\lambda)_K. \label{iso111}
	&&
\end{flalign} Consider the filtrations of $\widetilde{V}_n = \Big( F_Q \times_Q H^0(G/P,nM_\lambda) \Big)_K$ induced by the Harder-Narasimhan filtration of $\pi_* \mathcal{L}_{n\lambda}= F_Q \times_Q H^0(G/P,nM_\lambda)$ as in Proposition \ref{thm_maxslop_schubert}
\begin{flalign*}
	\quad \quad \quad &
	\widetilde{V}_\bullet^t = \bigoplus_{n \geq 0} \Big( F_Q \times_Q H^0(G/P,nM_\lambda)_{nt,\deg(F_Q)} \Big)_K \simeq \bigoplus_{n \geq 0} \Big( H^0(G/P,nM_\lambda)_{nt,\deg(F_Q)} \Big)_K.
	&&
\end{flalign*}

Via the string parametrization of the dual crystal basis, the string valuation $\nu$ on $H^0(G/P,nM_\lambda)$ extends canonically to a valuation $\widetilde{\nu}$ on $H^0(G/P,nM_\lambda)_K$. This induces a valuation on $\widetilde{V}_n \simeq H^0(G/P,nM_\lambda)_K$ via the isomorphism (\ref{iso111}). We shall abuse notation and also call it $\widetilde{\nu}$.

The Boucksom-Chen concave transform of ${\mathcal{L}_\lambda}$ with respect to $\widetilde{\nu}$ is the one associated to $\left( \widetilde{V}_\bullet,\widetilde{\nu}, \big\{ \widetilde{V}_\bullet^t \big\} \right)$, and obviously it coincides with the one associated to $\left( V_\bullet,\nu, \big\{ V_\bullet^t \big\} \right)$. Theorem \ref{concave transform} follows.

\subsection{Proof of Theorem \ref{Zhang type equality}}
Let $\mu$ be the Lebesgue measure on the real span of $\Delta_\lambda$, normalized with respect to $\mathbb{Z}^N$. By \cite{BouChen11}, we have
\begin{flalign*}
	\quad \quad \quad &
	h_{\mathcal{L}_\lambda}(X) = \frac{ \mathcal{L}_\lambda^{\dim \mathcal{X}} } {\dim \mathcal{X} \cdot L^{\dim X}} = \frac1{\operatorname{Vol}(\Delta_\lambda)} \int_{\Delta_\lambda} \langle \deg(F_Q),p(x) \rangle \, \mathrm{d}\mu(x).
	&&
\end{flalign*}

\begin{prop}\label{weighted average equality}  For any linear function $f: X(T)_\mathbb{R} \longrightarrow \mathbb{R}$, we have
    \begin{flalign*}\quad\quad\quad &\frac1{\operatorname{Vol}(\Delta_\lambda)} \int_{\Delta_\lambda} f \circ p \; \mathrm{d}\mu  = \frac{1}{\#W/W_P} \sum_{w \in W/W_P} f(w\lambda).&&
    \end{flalign*}
\end{prop}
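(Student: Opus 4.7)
The plan is to reduce both sides of the identity to an evaluation of $f$ at a single point of $X(T)_\mathbb{R}$, and then identify these points by exploiting the Weyl symmetry of weight multiplicities. By linearity of $f$, the left-hand side equals $f(b)$ where
$$b = \frac{1}{\operatorname{Vol}(\Delta_\lambda)} \int_{\Delta_\lambda} p(x)\,\mathrm{d}\mu(x)$$
is the barycenter of the pushforward measure $p_*\mu$ on $\mathcal{P}_\lambda$. Since $\lambda$ is $W_P$-fixed, the right-hand side equals $f(\bar{\lambda})$ with $\bar{\lambda} := \frac{1}{\#W/W_P} \sum_{w \in W/W_P} w\lambda = \frac{1}{|W|} \sum_{w \in W} w\lambda$. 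Hence it suffices to prove $b = \bar{\lambda}$ in $X(T)_\mathbb{R}$.

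The key step is the Weyl-invariance of $p_*\mu$. The compatibility of the string parametrization with the weight decomposition implies that the fibre of $p$ over a point $\mu' \in \mathcal{P}_\lambda \cap \tfrac{1}{n}X(T)$ has cardinality equal to the weight multiplicity $m_{n\mu'}^{n\lambda} = \dim H^0(G/P,nM_\lambda)[n\mu']$. A Riemann-sum approximation (via the Kaveh--Khovanskii machinery) then gives
$$\int_{\Delta_\lambda} (g \circ p)\,\mathrm{d}\mu = \lim_{n\to\infty} \frac{1}{n^d}\sum_{\mu' \in \mathcal{P}_\lambda \cap \frac{1}{n}X(T)} m_{n\mu'}^{n\lambda}\, g(\mu')$$
for any continuous test function $g: \mathcal{P}_\lambda \to \mathbb{R}$. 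Because $H^0(G/P,nM_\lambda)$ is a $G$-representation, its weight multiplicities are $W$-invariant, and therefore so is $p_*\mu$; in particular the barycenter $b$ is $W$-fixed.

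To finish, use the decomposition $X(T)_\mathbb{R} = X(T)_\mathbb{R}^W \oplus \mathbb{R}[\Delta]$, valid for any reductive $G$ (the Weyl-average of any root lies in both $X(T)_\mathbb{R}^W$ and the root lattice, hence vanishes). The Weyl-averaging operator is then precisely the projection $\pi$ onto the first factor, so $\bar{\lambda} = \pi(\lambda)$. All weights of $H^0(G/P,nM_\lambda)$ differ from $n\lambda$ by an element of $\mathbb{Z}[\Delta]$, so they share the common $\pi$-image $\pi(n\lambda)$; after averaging and rescaling one obtains $\pi(b) = \pi(\lambda)$. Combined with $b = \pi(b)$ (from the $W$-invariance of $b$), this yields $b = \pi(\lambda) = \bar{\lambda}$.

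The main obstacle I anticipate is the careful justification of the Riemann-sum identification above, in particular the normalization of the Lebesgue measure on the $d$-dimensional real span of $\Delta_\lambda$ consistently with the lattice-point counts coming from $\mathbb{Z}^N$, given that $\Delta_\lambda$ is not full-dimensional in $\mathbb{R}^N$. Once the measure-theoretic bookkeeping is sorted out, the rest is pure representation theory: Weyl-invariance of multiplicities plus the root-lattice decomposition of $X(T)_\mathbb{R}$.
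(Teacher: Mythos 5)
Your proof is correct, and it shares with the paper the one essential input --- the $W$-invariance of $p_*\mu$, deduced exactly as in Lemma \ref{Weyl invariance} from the $W$-invariance of the multiplicities $\dim H^0(G/P,nM_\lambda)[n\mu']$ and the compatibility of the string parametrization with the weight decomposition --- but your endgame is genuinely different. The paper unfolds $\int_{\mathcal{P}_\lambda}f\,\mathrm{d}p_*\mu$ over the chamber $\mathcal{P}_{\lambda,+}$, applies Lemma \ref{Weyl average constancy} (constancy of $\mu\mapsto\sum_{w\in W}w\mu$ on $\mathcal{P}_\lambda$, proved by passing to the simply connected cover and restricting weights to $Z(G)$), and then divides by the corresponding unfolding of $\operatorname{Vol}(\Delta_\lambda)$. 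You instead reduce both sides to evaluations of $f$ at single points and prove the barycentre identity $b=\bar\lambda$ via the splitting $X(T)_\mathbb{R}=X(T)_\mathbb{R}^W\oplus\mathbb{R}[\Delta]$ together with the congruence of all occurring weights with $n\lambda$ modulo $\mathbb{Z}[\Delta]$. The two ``constancy'' inputs are equivalent ($\mathbb{Q}[\Delta]$ is precisely the kernel of $X(T)_\mathbb{Q}\to X(Z(G)^\circ)_\mathbb{Q}$), but your route buys freedom from the chamber decomposition: you never need the $W/W_P$-translates of $\mathcal{P}_{\lambda,+}$ to tile $\mathcal{P}_\lambda$ up to $p_*\mu$-null walls. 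Two small remarks. First, to justify $X(T)_\mathbb{R}^W\cap\mathbb{R}[\Delta]=0$ (hence that Weyl-averaging kills $\mathbb{R}[\Delta]$), note that a $W$-invariant vector of $\mathbb{R}[\Delta]$ pairs to zero with every simple coroot and the Cartan pairing is perfect; your parenthetical justification is slightly circular as stated, since it presupposes the very intersection being trivial. Second, the Riemann-sum normalization you flag as a concern is treated at exactly the same level of detail in the paper, which simply takes $\mu$ to be the weak limit of the rescaled counting measures on $\Delta_\lambda\cap\tfrac1n\mathbb{Z}^N$; nothing further is required of you there.
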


\begin{proof}[Proof of \emph{Theorem \ref{Zhang type equality}}]Take $f=\deg(F_Q)$ in Proposition \ref{weighted average equality}.
    \end{proof}

The rest of this subsection is devoted to the proof of Proposition \ref{weighted average equality}. We start with two lemmata.
\begin{lem}\label{Weyl invariance}The pushforward measure $p_*\mu$ on $\mathcal{P}_\lambda$ is $W$-invariant.
\end{lem}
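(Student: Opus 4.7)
The plan is to test the measure $p_*\mu$ against arbitrary continuous functions on $\mathcal{P}_\lambda$ and reduce $W$-invariance to the Weyl-invariance of the character of the $G$-representation $H^0(G/P,nM_\lambda)$. Concretely, for any $f \in C(\mathcal{P}_\lambda)$ and any $w \in W$ it is enough to establish
\[
\int_{\Delta_\lambda} f \circ p \, d\mu \; = \; \int_{\Delta_\lambda} (f \circ w) \circ p \, d\mu.
\]

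First, I would approximate each integral by Riemann sums indexed by lattice points. Since $\mu$ is the $d$-dimensional Lebesgue measure on the real span of $\Delta_\lambda$ normalized with respect to $\mathbb{Z}^N$, and $f \circ p$ is continuous on the compact polytope $\Delta_\lambda$, a standard Ehrhart-type argument gives
\[
\int_{\Delta_\lambda} g \, d\mu \; = \; \lim_{n \to \infty} \frac{1}{n^d} \sum_{x \in \Delta_\lambda \cap \tfrac{1}{n} \mathbb{Z}^N} g(x)
\]
for any continuous $g$, and similarly with $f$ replaced by $f \circ w$.

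Second, I would rewrite these finite sums through the string parametrization. By construction, the bijection between $\Delta_\lambda \cap \tfrac{1}{n}\mathbb{Z}^N$ and the dual crystal basis of $H^0(G/P,nM_\lambda)$ respects the weight grading: $p(x) = \tfrac{1}{n}\mu$ precisely when the corresponding basis vector lies in $H^0(G/P,nM_\lambda)[\mu]$. Hence
\[
\sum_{x \in \Delta_\lambda \cap \tfrac{1}{n} \mathbb{Z}^N} f(p(x)) \; = \; \sum_{\mu \in X(T)} \dim H^0(G/P,nM_\lambda)[\mu] \cdot f\!\left(\tfrac{\mu}{n}\right),
\]
and analogously for $f \circ w$.

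Finally, I would invoke Weyl symmetry: because $\lambda$ is strictly antidominant, Borel--Weil identifies $H^0(G/P,nM_\lambda)$ with an irreducible $G$-representation, whose character is $W$-invariant, so $\dim H^0(G/P,nM_\lambda)[\mu] = \dim H^0(G/P,nM_\lambda)[w\mu]$ for every $w \in W$. The change of summation index $\mu \mapsto w^{-1}\mu$ then shows that the Riemann sums for $f \circ p$ and $(f \circ w) \circ p$ coincide for each $n$, and passing to the limit yields the desired equality. The main subtlety I anticipate is justifying the Riemann-sum formula with the correct normalization of $\mu$ on the affine span of the rational polytope $\Delta_\lambda$, which has dimension $d < N$ in general; once that calibration is in place, the remaining steps are essentially bookkeeping using weight-compatibility of the string parametrization.
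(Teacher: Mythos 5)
Your proposal is correct and follows essentially the same route as the paper: the paper likewise realizes $\mu$ as the limit of normalized counting measures on $\Delta_\lambda\cap\tfrac1n\mathbb{Z}^N$, identifies the pushforward under $p$ with the weight-multiplicity function $\mu\longmapsto\dim H^0(G/P,M_{n\lambda})[n\mu]$ via the weight-compatibility of the string parametrization, and concludes from the $W$-invariance of the character of the irreducible module. Your Riemann-sum/test-function phrasing and the normalization caveat are just a more explicit rendering of the same argument.
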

\begin{proof} Note that the measure $\mu$ on $\Delta_\lambda$ is the limit of the Dirac measures $\delta_{\Delta_\lambda\cap \frac{1}{n}\mathbb{Z}^N}$, i.e. 
$\mu=\lim_n \delta_{\Delta_\lambda\cap \frac{1}{n}\mathbb{Z}^N}$.
Consider the map $\Delta_{n\lambda} \longrightarrow \mathcal{P}_{n\lambda}$ and note that it sends integral points to its corresponding weights. Dividing by n, we see that
\begin{flalign*}
  \quad\quad\quad &  p_{*} \delta_{\Delta_\lambda\cap\frac{1}{n}\mathbb{Z}^N} = \operatorname{mult}_n \delta_{\mathcal{P}_\lambda\cap \frac{1}{n}X(T)}, &
\end{flalign*}
where $\operatorname{mult}_n$ is the multiplicity function
\begin{flalign*}
\quad\quad\quad &
\operatorname{mult}_n: \mathcal{P}_\lambda\cap \tfrac{1}{n}X(T)\longrightarrow \mathbb{N},\quad \mu\longmapsto \dim H^0(G/P, M_{n\lambda})[n\mu].&
\end{flalign*}
Since both $\operatorname{mult}_n$ and $\delta_{\mathcal{P}_\lambda\cap \tfrac{1}{n}X(T)}$ are $W$-invariant, we deduce that
\begin{flalign*}
    \quad \quad \quad &
    p_*\mu=\lim_n p_*\delta_{\Delta_\lambda\cap \frac{1}{n}\mathbb{Z}^N}
    &&
\end{flalign*} is also $W$-invariant.
\end{proof}
\begin{lem}\label{Weyl average constancy}The map $\mathcal{P}_\lambda\longrightarrow \mathcal{P}_\lambda$, $\mu \longmapsto \sum_{w\in W}w\mu$
is constant.
\end{lem}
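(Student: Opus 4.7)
The plan is to exploit two facts: the map $\mu \longmapsto \sum_{w \in W} w\mu$ is the restriction of a linear (in particular, affine) map $X(T)_\mathbb{R} \longrightarrow X(T)_\mathbb{R}$, and the weight polytope $\mathcal{P}_\lambda$ is by definition the convex hull of the Weyl orbit $W\lambda$. An affine map is determined by its values on a spanning set, and an affine map that is constant on the vertices of a polytope is constant on the whole polytope. So the problem reduces to proving constancy on $W\lambda$.

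Next I would verify constancy on $W\lambda$ by a reindexing argument. For any $w' \in W$, the assignment $w \longmapsto ww'$ is a bijection of $W$ onto itself, and hence
\begin{flalign*}
\quad\quad\quad & \sum_{w \in W} w(w'\lambda) = \sum_{w \in W} (ww')\lambda = \sum_{\tau \in W} \tau\lambda, &&
\end{flalign*}
which is independent of $w'$. This shows the map takes the common value $\sum_{\tau \in W} \tau \lambda$ on every vertex of $\mathcal{P}_\lambda$, and therefore is constant on all of $\mathcal{P}_\lambda$ by the affine extension argument above.

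I do not foresee any serious obstacle: the only subtlety is to remark that the codomain written as $\mathcal{P}_\lambda$ in the statement can safely be replaced by $X(T)_\mathbb{R}$ (or the constant value be rescaled by $1/\#W$) without affecting the conclusion, since the sole content of the lemma is that the output does not depend on $\mu \in \mathcal{P}_\lambda$. Once this is proved, applying Lemma \ref{Weyl invariance} together with this constancy will allow the integral $\int_{\mathcal{P}_\lambda} f \; d(p_*\mu)$ to be evaluated by symmetrizing $f$ over $W$, which is precisely what is needed for Proposition \ref{weighted average equality} and hence for Theorem \ref{Zhang type equality}.
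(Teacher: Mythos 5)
Your proof is correct, and it is genuinely different from (and more elementary than) the one in the paper. The paper first proves a stronger claim: for any $\mu_1,\mu_2\in X(T)$ with $\mu_1|_{Z(G)}=\mu_2|_{Z(G)}$ one has $\sum_{w\in W}w\mu_1=\sum_{w\in W}w\mu_2$; this is done by passing to the simply connected cover, where $T^W=Z(G)$, and then invoking the fact that all weights of the simple module $H^0(G/P,M_\lambda)$ share the same central character to get constancy on $\mathcal{P}_\lambda\cap X(T)$, before extending by linearity. You instead observe that the vertices of $\mathcal{P}_\lambda$ lie in the single orbit $W\lambda$, where constancy is immediate from the reindexing $\sum_{w\in W}w(w'\lambda)=\sum_{w\in W}(ww')\lambda=\sum_{\tau\in W}\tau\lambda$, and then extend to the convex hull by linearity of $\mu\mapsto\sum_{w\in W}w\mu$. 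Both arguments are complete; yours avoids any structure theory of reductive groups, while the paper's yields the marginally more general statement that the symmetrized sum depends only on the central character (which, however, is not needed for Proposition \ref{weighted average equality}). Your side remark that the codomain may be taken to be $X(T)_\mathbb{R}$ rather than $\mathcal{P}_\lambda$ is also apt, since only constancy is used downstream.
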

\begin{proof}We claim that for any $\mu_1,\mu_2\in X(T)$ such that  $\mu_1|_{Z(G)}=\mu_2|_{Z(G)}$, \begin{flalign*}
    \quad\quad\quad &\sum_{w\in W}w\mu_1=\sum_{w\in W}w\mu_2. &
\end{flalign*}
In fact, for any simply connected cover $G^{sc}\longrightarrow G$, the preimage $T^{sc}$ of $T$ is a maximal torus in $G^{sc}$ and $W=W(G,T)=W(G^{sc}, T^{sc})$. Via the natural map $X(T)\longrightarrow X(T^{sc})$, we are reduced to the case $G$ is simply connected. In this case, $T^{W}=Z(G)$ and the desired equality follows directly.

$H^0(G/P, M_{\lambda})$ is a simple $G$-module so all weights $\mu$ occuring in $H^0(G/P, M_{\lambda})$ share a same restriction to $Z(G)$. So by the above claim, the lemma is true for lattice points $\mathcal{P}_\lambda \cap X(T)$. Note that the map $\mu \longmapsto \sum_{w\in W}w\mu$ is linear, we are done.
\end{proof}

\begin{proof}[Proof of \emph{Proposition \ref{weighted average equality}}]
Let $\mathcal{P}_{\lambda,+}$ be the intersection of $\mathcal{P}_\lambda$ with the anti-dominant cone $\big\{ \langle \alpha^\vee,\lambda \rangle<0 : \alpha \in \Delta\backslash \Delta_P \big\}$. Note that the subgroup $W_P\subseteq W$ acts trivially on $\mathcal{P}_\lambda$. By Lemma \ref{Weyl invariance}, one has 
\begin{flalign*}\quad\quad\quad & \int_{\Delta_\lambda} f \circ p \; \mathrm{d}\mu=\int_{\mathcal{P}_\lambda} f \; \mathrm{d}p_*\mu = \int_{\mathcal{P}_{\lambda,+}} \Big( \sum_{w \in W/W_P} f(wx) \Big) \; \mathrm{d} p_*\mu. &&
\end{flalign*}
By the linearity of $f$ and Lemma \ref{Weyl average constancy}, it moreover equals to 
\begin{flalign*}\quad\quad\quad & \int_{\mathcal{P}_{\lambda,+}} f \Big( \sum_{w \in W/W_P} wx \Big) \; \mathrm{d} p_*\mu=\Big( \sum_{w \in W/W_P} f(w\lambda) \Big)  \int_{\mathcal{P}_{\lambda,+}}1\; \mathrm{d} p_*\mu. &
\end{flalign*}
On the other hand, \begin{flalign*}\quad\quad\quad & \operatorname{Vol}(\Delta_\lambda)=\int_{\mathcal{P}_\lambda} 1 \; \mathrm{d} p_*\mu=\#{W/W_P} \int_{\mathcal{P}_{\lambda,+}} 1 \; \mathrm{d} p_*\mu. &
\end{flalign*}
Taking ratios, we obtain the desired equality.
\end{proof}

We end this section by the case $\operatorname{Gr}(2,4)$.
\begin{example}
Let $G=\operatorname{GL}_4$, $E$ be a vector bundle on $C$ of rank $4$ and consider the $\operatorname{Gr}(2,4)$-bundle $\operatorname{Gr}_2(E)$. In this case the string polytopes (with respect to a certain reduced decomposition) is just the Gelfand-Zetlin polytope \cite[Remark 2.4]{Kaveh_note_2011}. Identify $X(T)_\mathbb{Q}$ with $\bigoplus_{i} \mathbb{Q}\lambda_i$ as before. The line bundle $\mathcal{O}(1)$ corresponds to the character $\lambda=(0,0,1,1)$.

The Gelfand-Zetlin polytope $\operatorname{GZ}_\lambda$ is defined by $0 \leq x_2 \leq x_1 \leq x_3 \leq 1$ and $x_2 \leq x_4 \leq x_3$ in $\mathbb{R}^4$, with vertices $(0,0,0,0)$, $(0,0,1,0)$, $(0,0,1,1)$, $(1,0,1,0)$, $(1,0,1,1)$ and $(1,1,1,1)$. 
The weight polytope $\mathcal{P}_\lambda$ is the convex hull of $(0,0,1,1)$, $(0,1,0,1)$, $(0,1,1,0)$, $(1,1,0,0)$, $(1,0,1,0)$ and $(1,0,0,1)$, the weight map $p: \operatorname{GZ}_\lambda \longrightarrow \mathcal{P}_\lambda$ is
\begin{flalign*}
    \quad \quad \quad &
(x_1,x_2,x_3,x_4) \longmapsto (x_1,-x_1+x_2+x_3, -x_2-x_3+x_4,-x_4)+(0,0,1,1)
    &&
\end{flalign*} and $\deg(F_Q)$ is the function on $X(T)_\mathbb{Q}$ given by $\lambda_i \longmapsto \mu_i$ as in \textsection \ref{example_grassmann}.

A direct computation shows that
\begin{flalign*}
    \quad \quad \quad &
    \operatorname{Vol}(\operatorname{GZ}_\lambda)=\frac1{12}, \quad \int_{\operatorname{GZ}_\lambda} \deg(F_Q)\circ p \; \mr d \mu = \frac1{24} (\mu_1+\mu_2+\mu_3+\mu_4).
    &&
\end{flalign*}
Thus $h(X)=\frac12 (\mu_1+\mu_2+\mu_3+\mu_4)$.

On the other hand, $W/W_P= S_4 \big/ (S_2 \times S_2)$, $\#W/W_P=6$ and
\begin{flalign*}
    \quad \quad \quad &
    \sum_{w \in W/W_P} \langle \deg(F_Q),w\lambda \rangle = \sum_{1 \leq i < j \leq 4} \mu_i+\mu_j.
    &&
\end{flalign*} Thus $\frac{1}{\#W/W_P} \sum_{w \in W/W_P} \langle \deg(F_Q),w\lambda \rangle = \frac12 (\mu_1+\mu_2+\mu_3+\mu_4) = h(X)$, which is predicted by Theorem \ref{Zhang type equality}.
\end{example}

\section{Base loci and movable cones}
Keep the notations in \textsection \ref{section_ ht fil and succ min}. We fix a closed point $p_0 \in C$ and set $f=\pi^*(\O_C(p_0))$. For any $w \in W_Q \backslash W/W_P$, let $\mathcal{C}_w = F_Q \times_Q QwP/P$. Let $t\in\Q$. The main theorem of this section is a computation of base loci.

\begin{thm} \label{prop_aug_bas}
Let $\mathrm{B}_+(\shfL_\lambda-tf)$ be the augmented base locus of $\shfL_\lambda-tf$. Then
\begin{flalign*}
    \quad \quad \quad &
\mr B_+(\shfL_\lambda-tf)=\scrX \Bigg \backslash \coprod_{\langle \deg(F_Q),w\lambda \rangle > t} \mc C_w = \coprod_{\langle \deg(F_Q),w\lambda \rangle \leq t} \mc C_w.
    &&
\end{flalign*}
\end{thm}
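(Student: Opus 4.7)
Write $L_t := \mathcal{L}_\lambda - tf$. I would prove both inclusions via Ballaÿ's theorem \cite[Theorem 1.1]{Ballay21}, which in the geometric setting characterises bigness by positivity of the essential minimum. As a preliminary reduction, if $L_t$ is not big then $\zeta_{\mathrm{ess}}(X, h_{L_t}) = \max_w \zeta_w - t \leq 0$ by Theorem \ref{thm of ht filtration}, so $\zeta_w \leq t$ for every $w$, and both sides of the claimed equality equal $\mathcal{X}$. I may therefore assume $L_t$ is big for the rest of the argument.

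For the inclusion $\supseteq$, I would fix $w$ with $\zeta_w \leq t$ and apply Proposition \ref{ess min on Schubert varieties} to conclude that the essential minimum of $h_{L_t}$ on $X_w$ is $\zeta_w - t \leq 0$. Ballaÿ's theorem applied to $\mathcal{X}_w$ then yields that $L_t|_{\mathcal{X}_w}$ is not big, so the restricted volume satisfies $\mathrm{vol}_{\mathcal{X}|\mathcal{X}_w}(L_t) \leq \mathrm{vol}(L_t|_{\mathcal{X}_w}) = 0$. By the Ein--Lazarsfeld--Musta\c{t}\v{a}--Nakamaye--Popa characterisation of the augmented base locus via restricted volumes, $\mathcal{X}_w \subseteq \mathrm{B}_+(L_t)$. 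Since $w' \leq w$ in Bruhat order forces $\zeta_{w'} \leq \zeta_w$ (a consequence of Theorem \ref{thm of ht filtration}, as the sets $Z_t$ are Zariski closed), the union $\bigcup_{\zeta_w \leq t} \mathcal{X}_w$ equals $\bigcup_{\zeta_w \leq t} \mathcal{C}_w$, giving the inclusion.

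For the reverse inclusion, I would fix $w$ with $\zeta_w > t$ and an arbitrary point $x \in \mathcal{C}_w$, and construct, for $m$ sufficiently large, a section of $mL_t - A$ nonvanishing at $x$, where $A := \mathcal{L}_\lambda + N \pi^* \mathcal{O}_C(p_0)$ is ample for $N$ large. By the projection formula
\[
H^0(\mathcal{X}, mL_t - A) = H^0\bigl(C, \pi_*\mathcal{L}_{(m-1)\lambda} \otimes \mathcal{O}_C(-(mt+N)p_0)\bigr),
\]
and by Proposition \ref{thm_maxslop_schubert} the HN filtration of $\pi_*\mathcal{L}_{(m-1)\lambda}$ contains the sub-bundle $E_w := F_Q \times_Q V_{\geq (m-1)\zeta_w}$ (with $V = H^0(G/P, M_{(m-1)\lambda})$), whose semistable graded pieces have slopes $\geq (m-1)\zeta_w$. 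Twisting by $\mathcal{O}_C(-(mt+N)p_0)$ shifts these slopes to $\geq m(\zeta_w - t) - (\zeta_w + N)$, which grows linearly in $m$ since $\zeta_w > t$, so for $m$ large $E_w \otimes \mathcal{O}_C(-(mt+N)p_0)$ is globally generated on $C$.

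To promote global generation on $C$ to pointwise nonvanishing on $\mathcal{C}_w$, I would trivialise $F_{Q,c} \simeq Q$ over $c = \pi(x)$, so that $x$ corresponds to some $x_0 \in QwP/P$. The Bruhat decomposition $QwP/P = \bigsqcup_{w' \in W_Q w W_P/W_P} Bw'P/P$ places $x_0$ in a $B$-cell $Bw'P/P$; the Pl\"ucker-type weight vector $p_{w'} \in V[(m-1)w'\lambda]$ does not vanish on $Bw'P/P$, and it lies in $V_{\geq (m-1)\zeta_w}$ because $\langle \deg F_Q, w'\lambda \rangle = \zeta_w$. Global generation of $E_w \otimes \mathcal{O}_C(-(mt+N)p_0)$ then lifts $p_{w'}$ to a global section of $mL_t - A$ nonvanishing at $x$, so $x \notin \mathrm{B}_+(L_t)$. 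The main obstacle is precisely this pointwise covering argument: verifying that the family of Pl\"ucker sections $\{p_{w'}\}_{w' \in W_Q w W_P/W_P}$ assembles, inside the HN sub-bundle $V_{\geq (m-1)\zeta_w}$, into enough weight vectors to miss every point of $\mathcal{C}_w$, not merely a dense open subset. This relies on knowing the $B$-cell structure of $QwP/P$ and the well-definedness of $\zeta_w$ on $W_Q \backslash W / W_P$ noted in the preamble.
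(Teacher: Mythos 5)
Your proposal is correct, and its two halves rest on the same pillars as the paper's proof. For the inclusion $\coprod_{\zeta_w\le t}\mathcal{C}_w\subseteq\mathrm{B}_+(\shfL_\lambda-tf)$ you and the paper both invoke the restricted-volume characterisation of $\mathrm{B}_+$ (Theorem \ref{thm_aug_bas_res_vol}) after showing $\operatorname{Vol}((\shfL_\lambda-tf)|_{\scrX_w})=0$; you deduce the latter from Ballaÿ's bigness criterion, the paper from Chen's comparison theorem applied to $\limsup_m\mu_{\max}(\pi_*(m(\shfL_\lambda-tf)|_{\scrX_w}))/m=\zeta_w-t\le 0$ --- these are interchangeable. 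The step you single out as the main obstacle is precisely the paper's Lemma \ref{Qu}(1) and Lemma \ref{lemma_schubert_base_loci}: the extremal weight space $H^0(G/P,M_{(m-1)\lambda})[(m-1)w'\lambda]$ is one-dimensional, restricts to a nonzero section of $\overline{Bw'P}/P$, and by $B$-equivariance that restriction is nowhere vanishing on the single $B$-orbit $Bw'P/P$; since $\langle\deg(F_Q),w'\lambda\rangle=\zeta_w$ for every $w'\in W_QwW_P$, these sections all lie in the HN piece you use and cover every point of $\mathcal{C}_w$, not merely a dense open subset. So the obstacle is surmountable exactly as you outline. The genuine difference is in how the complement of $\mathrm{B}_+$ is handled: the paper first bounds the stable base locus via the ample sub-bundle $E\subseteq\pi_*(\shfL_\lambda-tf)$ and the surjectivity of $H^0(C,\mathrm{Sym}^mE)\otimes\kappa(p)\to\mathrm{Sym}^mE(p)$, and then runs the argument a second time for $n(\shfL_\lambda-tf)-\mathcal{A}$ with an auxiliary ample class $\mathcal{A}=\shfL_{\lambda'}-t'f$ to pass from $\mathrm{B}$ to $\mathrm{B}_+$; you go directly for $\mathrm{B}_+$ by exhibiting a section of $m(\shfL_\lambda-tf)-A$ through each point of $\mathcal{C}_w$, using that the twisted HN sub-bundle $E_w\otimes\mathcal{O}_C(-(mt+N)p_0)$ has minimal slope growing linearly in $m$ and is therefore globally generated. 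Your route is slightly more economical (no symmetric powers, no second pass with an auxiliary class). Two details to pin down in a full write-up: choose $m$ in an arithmetic progression so that $mt+N\in\mathbb{Z}$ (the paper instead reduces to $t\in\mathbb{N}$ at the outset), and justify that evaluation of a global section at $x$ factors through the fibre of the pushforward at $\pi(x)$, which holds because the formation of $\pi_*\shfL_{n\lambda}$ commutes with base change.
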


Comparing Theorem \ref{thm of ht filtration} and Theorem \ref{prop_aug_bas}, we find that $\mr B_+(\shfL_\lambda-tf)$ agrees with the Zariski closure of $Z_t$ in $\mathcal{X}$ for every $t \not= \zeta_w$. At critical points, $\mr B_+(\shfL_\lambda-tf)$ is upper continuous while $Z_t$ is lower continuous.

As a corollary, we compute the movable cones on flag bundles. (Please see \textsection \ref{subsection_ movable cones} for the notations.)
\begin{thm} \label{mov cone of flag bundle}
    The $k$-th movable cone $\operatorname{Mov}^k(F/P)$ is the cone defined by 
    \begin{enumerate}
        \item $\langle \alpha^\vee, \cdot \rangle <0$ for any $\alpha \in \Delta \backslash \Delta_P$, and
        \item $\langle \deg(F_Q),w \cdot \rangle>0$ for any $w \in W/W_P$ with $\ell(w) \geq n-k+1$.
    \end{enumerate}
\end{thm}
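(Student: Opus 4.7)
The plan is to combine Theorem \ref{prop_aug_bas} with the definition of $k$-movability ($\mathrm{codim}\,\mathrm{B}_+(\xi)\geq k$) and translate the geometric condition into the two inequalities of the theorem. Every class in $N^1(F/P)_{\mathbb{R}}$ has the form $\xi=\mathcal{L}_\lambda-tf$ for some $\lambda\in X(P)_{\mathbb{R}}$ and $t\in\mathbb{R}$, because $N^1(F/P)_{\mathbb{R}}$ is generated by line bundles coming from characters of $P$ together with the vertical fiber class. Under this presentation the functionals of the theorem evaluate as $\langle\alpha^\vee,\xi\rangle=\langle\alpha^\vee,\lambda\rangle$ and $\langle\deg(F_Q),w\xi\rangle=\langle\deg(F_Q),w\lambda\rangle-t$, since $f=\pi^*\mathcal{O}_C(p_0)$ pairs to $0$ with any coroot and to $1$ with $\deg(F_Q)$ (exactly as computed in the Grassmann-bundle example of \textsection\ref{example_grassmann}). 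Hence condition (1) becomes strict antidominance of $\lambda$ and condition (2) becomes $\langle\deg(F_Q),w\lambda\rangle>t$ for every $w\in W/W_P$ with $\ell(w)\geq n-k+1$.

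The necessity of (1) I would obtain by restricting $\xi$ to a closed fiber $G/P\hookrightarrow F/P$: since $f|_{G/P}=0$, the restriction equals $\mathcal{L}_\lambda|_{G/P}$. A $k$-movable class with $k\geq 1$ is in particular big, and a standard volume-restriction argument forces $\mathcal{L}_\lambda|_{G/P}$ to be big on a general fibre; on the flag variety $G/P$ bigness coincides with ampleness, which is precisely strict antidominance of $\lambda$.

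Assuming (1), $\mathcal{L}_\lambda$ is relatively ample over $C$ and Theorem \ref{prop_aug_bas} applies. After extending that theorem from $t\in\mathbb{Q}$ to arbitrary real $t$ via positive rescaling (which preserves $\mathrm{B}_+$) and upper semi-continuity of $\mathrm{B}_+$, the formula
\[
\mathrm{B}_+(\mathcal{L}_\lambda-tf)=\coprod_{\langle\deg(F_Q),w\lambda\rangle\leq t}\mathcal{C}_w
\]
(with $w$ running over $W_Q\backslash W/W_P$), together with $\mathrm{codim}\,\mathcal{C}_w=n-\ell(w)$, shows that $\xi$ is $k$-movable iff every double coset $w$ with $\ell(w)\geq n-k+1$ satisfies $\langle\deg(F_Q),w\lambda\rangle>t$. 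The passage from double cosets to $W/W_P$ is harmless: the pairing $\langle\deg(F_Q),w\lambda\rangle$ depends only on the $W_Q$-orbit of $w$ (as already used in \textsection\ref{section_ ht fil and succ min}), and $\ell$ on a double coset equals the maximum length of its representatives in $W/W_P$ by its very definition. This yields condition (2) and completes the identification of $\operatorname{Mov}^k(F/P)$ with the cone described by (1) and (2).

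The main obstacle I anticipate is twofold: first, the rational-to-real extension of Theorem \ref{prop_aug_bas}, which requires careful use of upper semi-continuity of $\mathrm{B}_+$ and attention to the critical values $t=\langle\deg(F_Q),w\lambda\rangle$ where the base locus jumps; second, reconciling the strict-inequality description with the precise convention for $\operatorname{Mov}^k$ in \textsection\ref{subsection_ movable cones} (open versus closed cone), which may require a passage to the interior or closure at the end of the argument.
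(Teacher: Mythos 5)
Your proposal is correct and follows essentially the same route as the paper: reduce every class to the form $\mathcal{L}_\lambda-tf$, deduce condition (1) from bigness of the restriction to a fiber (bigness $=$ ampleness $=$ strict antidominance on $G/P$), and then read off condition (2) from the codimension of $\mathrm{B}_+(\mathcal{L}_\lambda-tf)$ as computed in Theorem \ref{prop_aug_bas}. The two caveats you flag (the $\mathbb{Q}$-to-$\mathbb{R}$ extension of Theorem \ref{prop_aug_bas} and the double-coset bookkeeping for $\ell$) are handled implicitly in the paper and do not change the argument.
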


In particular, let $\mathcal{L}$ be a line bundle on $F/P$ and $e_{k}(h_{\mathcal{L}})$ be the $k$-th minimum of Zhang of the height function $h_{\mathcal{L}}$, we have
\begin{cor}\label{k-movable=ZhangMinima}
    $\mathcal{L}$ is $k$-movable if and only if $e_{k}(h_{\mathcal{L}})>0$.
\end{cor}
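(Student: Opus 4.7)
The corollary should fall out by matching the characterization of $\operatorname{Mov}^k(F/P)$ in Theorem~\ref{mov cone of flag bundle} against the explicit formula $e_i(h_{\mathcal{L}_\mu}) = \min\bigl\{\zeta_w : \ell(w) \geq n - i + 1\bigr\}$ for Zhang's successive minima (with $n=\dim G/P$ and $\zeta_w=\langle\deg(F_Q),w\mu\rangle$) recorded as a corollary of Theorem~\ref{thm of ht filtration}.

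The first step is to write any line bundle $\mathcal{L}$ on $F/P$ in the form $\mathcal{L}\equiv \mathcal{L}_\mu - sf$ in $N^1(F/P)_\mathbb{R}$, where $\mu\in X(P)_\mathbb{R}$ is the character by which $\mathcal{L}$ acts on a generic fiber $G/P$ and $s\in\mathbb{R}$ is minus the degree of the pullback factor from $C$. Since $f=\pi^*\mathcal{O}_C(p_0)$ has constant height $1$ on $X(\overline{K})$ and the height is linear in the line bundle, $h_\mathcal{L}=h_{\mathcal{L}_\mu}-s$; the height filtration merely shifts rigidly and $e_k(h_\mathcal{L})=e_k(h_{\mathcal{L}_\mu})-s$. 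Assuming $\mu$ is strictly antidominant so that Section~\ref{section_ ht fil and succ min} applies, the formula above yields
\begin{flalign*}
\quad\quad\quad &
e_k(h_\mathcal{L})=\min\bigl\{\,\langle\deg(F_Q),w\mu\rangle-s : \ell(w)\geq n-k+1\,\bigr\}.
&&
\end{flalign*}
Because the linear functional $\langle\deg(F_Q),w\,\cdot\,\rangle$ evaluated on the class $\mathcal{L}_\mu-sf$ equals $\langle\deg(F_Q),w\mu\rangle - s$, the inequality $e_k(h_\mathcal{L})>0$ translates verbatim into condition~(2) of Theorem~\ref{mov cone of flag bundle}, while condition~(1) is precisely strict antidominance of $\mu$. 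Thus in the strictly antidominant regime the two sides coincide.

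The main obstacle is the boundary case where $\mu$ is not strictly antidominant, since Zhang's minima were computed in Section~\ref{section_ ht fil and succ min} only under that hypothesis. On the movability side Theorem~\ref{mov cone of flag bundle} rules out such classes outright, so it suffices to prove $e_k(h_\mathcal{L})\leq 0$ there. My plan is to perturb $\mu$ by a small positive multiple of a strictly antidominant character, apply the interior equivalence already established, and pass to the limit using upper semicontinuity of $e_k$ in the numerical class of $\mathcal{L}$; the limiting value of the right-hand side is $\leq 0$ exactly because (1) or (2) fails at the boundary. Should $\mu$ lie outside the closed antidominant cone, I would instead locate a Schubert curve on which $\mathcal{L}$ has nonpositive degree and use it to exhibit a Zariski-dense set of $\overline{K}$-points of bounded height, forcing $e_k(h_\mathcal{L})\leq 0$ directly from the definition of the height filtration.
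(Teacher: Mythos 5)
Your first two paragraphs reproduce exactly the paper's (implicit) argument: the corollary is presented there as an immediate consequence of Theorem \ref{mov cone of flag bundle} together with the identity $e_k(h_{\mathcal{L}_\mu-sf})=\min\bigl\{\langle\deg(F_Q),w\mu\rangle-s:\ell(w)\geq d-k+1\bigr\}$ coming from Corollary \ref{zeta_w successive minima}, and your translation of condition (2) into $e_k>0$, including the shift by $s$, is correct.

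The gap is in your treatment of the boundary case, and it is not repairable as proposed, because the statement you reduce to --- that $e_k(h_{\mathcal{L}})\leq 0$ whenever $\mu$ is not strictly antidominant --- is false. Take $F$ trivial, $\mu=0$ and $s=-1$, i.e.\ $\mathcal{L}=f=\pi^*\mathcal{O}_C(p_0)$. Then $h_f\equiv 1$ on $X(\overline{K})$, so $Z_t=\emptyset$ for $t\leq 1$ and $e_k(h_f)=1>0$ for every $k$; yet $f^{\dim\mathcal{X}}=0$, so $f$ is not big, $\mathrm{B}_+(f)=\mathcal{X}$, and $f$ is not even $1$-movable. Your perturbation argument detects exactly this: replacing $\mu=0$ by $\epsilon\lambda_0$ with $\lambda_0$ strictly antidominant gives $e_k=\epsilon\cdot\min_w\langle\deg(F_Q),w\lambda_0\rangle+1\to 1$, not a quantity $\leq 0$. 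The failure of condition (1) alone does not drive the limiting value of the formula below zero; only the failure of condition (2) does, and the two are independent. (Your fallback for $\mu$ outside the \emph{closed} antidominant cone is fine in spirit --- there heights with respect to $\mathcal{L}_\mu$ are unbounded below on a dense set and $e_k=-\infty$ --- but it does not touch the genuine problem, which sits on the boundary of the cone.) What you have actually uncovered is that the corollary requires the implicit hypothesis that $\mathcal{L}$ restricts to a big class on the generic fiber, equivalently condition (1) of Theorem \ref{mov cone of flag bundle}. Under that hypothesis your first two paragraphs already constitute a complete proof, and no boundary analysis is needed, since $k$-movability also forces bigness and hence condition (1).
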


\subsection{Augmented base loci and the restricted volume}
Let $\mathbf k$ be a field and $Y$ be a normal projective variety over $\mathbf k$.  Let $L$ be a line bundle on $Y$ and $V \subseteq H^0(Y,L)$ be a subspace. We shall denote by $\mr{Bs}(V)$ the base locus of $V$.

The \textit{stable base locus} of $L$ is defined as $\mr{B}(L)=\bigcap_{n\geq 1} \mr{Bs}(H^0(Y,nL))$.  The \textit{augmented base locus} of $L$ is defined as $\mr{B}_+(L)=\bigcap_{n\geq 1} \mr B(nL-A)$ for any ample line bundle $A$. This is independent of the choice of $A$.

Let $Z$ be a closed subvariety of $Y$ of dimension $d$. The \textit{restricted volume} is defined as
    \begin{flalign*}
        \quad \quad \quad &
        \operatorname{Vol}(Y|Z,L)=\limsup_{n\rightarrow \infty}\frac{\dim_{\mathbf k}\mr{Im}\Big( H^0(Y,nL)\rightarrow H^0(Z,nL|_Z) \Big)}{n^d/d!}.
        &&
    \end{flalign*}
    
The following characterization of augmented base loci in terms of restricted volumes is well known, which we include here for  readers' convenience.
\begin{thm}[\cite{ELMNP2009resvol}, Theorem C] \label{thm_aug_bas_res_vol}
	$\mr B_+(L)$ is the union of all closed subvarieties $Z$ with $\operatorname{Vol}(Y|Z,L)=0$.
\end{thm}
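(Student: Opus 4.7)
My plan is to apply Theorem \ref{thm_aug_bas_res_vol} (ELMNP), which characterizes $\mathrm{B}_+(\mathcal{L}_\lambda - tf)$ as the union of closed subvarieties $Z$ with $\operatorname{Vol}(\mathcal{X}|Z, \mathcal{L}_\lambda - tf) = 0$. Combined with the Harder-Narasimhan data of Proposition \ref{thm_maxslop_schubert}, the projection formula, and the fact that $f = \pi^*\mathcal{O}_C(p_0)$, this turns the question into slope computations on $C$. As a preliminary, the right-hand side $\coprod_{\zeta_w \leq t}\mathcal{C}_w$ is in fact closed and equals $\bigcup_{\zeta_w \leq t}\mathcal{X}_w$: the surjective restriction $\pi_*(\mathcal{L}_\lambda|_{\mathcal{X}_w}) \twoheadrightarrow \pi_*(\mathcal{L}_\lambda|_{\mathcal{X}_{w'}})$ for $w' \leq w$ (standard for Schubert subvarieties) together with Proposition \ref{thm_maxslop_schubert} forces $\zeta_{w'} \leq \zeta_w$. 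I write $\mathcal{X}_-$ for this set.

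For the inclusion $\mathcal{X}_- \subseteq \mathrm{B}_+(\mathcal{L}_\lambda - tf)$, I plan to show $\operatorname{Vol}(\mathcal{X}|\mathcal{X}_w, \mathcal{L}_\lambda - tf) = 0$ for each $w$ with $\zeta_w \leq t$, which by Theorem \ref{thm_aug_bas_res_vol} places $\mathcal{X}_w$ inside $\mathrm{B}_+$. The projection formula gives
\begin{equation*}
h^0\bigl(\mathcal{X}_w, n(\mathcal{L}_\lambda - tf)|_{\mathcal{X}_w}\bigr) = h^0\bigl(C, \pi_*(n\mathcal{L}_\lambda|_{\mathcal{X}_w}) \otimes \mathcal{O}_C(-ntp_0)\bigr),
\end{equation*}
and by Proposition \ref{thm_maxslop_schubert} the twisted bundle has maximal slope $n(\zeta_w - t) \leq 0$. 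Thus $h^0$ vanishes for $n \gg 0$ when $\zeta_w < t$, and is bounded by the rank of the top (slope-zero, semistable) HN piece when $\zeta_w = t$; this rank is $O(n^{\dim\overline{QwP}/P}) = O(n^{\dim\mathcal{X}_w - 1})$, so the normalization by $n^{\dim\mathcal{X}_w}$ kills it.

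For the reverse inclusion I must show $\mathcal{C}_w \cap \mathrm{B}_+ = \emptyset$ whenever $\zeta_w > t$. Fix an ample divisor $A'$ on $C$ and set $A = \pi^*A'$, ample on $\mathcal{X}$. By $\mathrm{B}_+(L) = \bigcap_n \mathrm{B}(nL - A)$, it suffices, given $x \in \mathcal{C}_w$, to produce for some $n$ a section of $n(\mathcal{L}_\lambda - tf) - A$ not vanishing at $x$. I plan to extract it from the sub-bundle $E_n^{>t} \subseteq \pi_*(n\mathcal{L}_\lambda)$ formed by the HN pieces of slope $> nt$ (well-defined by Proposition \ref{thm_maxslop_schubert}); after twisting by $\mathcal{O}_C(-ntp_0 - A')$, its slopes are all strictly positive, so for $n$ large it is globally generated on $C$ by Riemann-Roch. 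Lifting via projection formula yields sections of $n(\mathcal{L}_\lambda - tf) - A$ on $\mathcal{X}$. Using the weight decomposition $E_n^{>t} = \bigoplus_{\langle\deg(F_Q),\mu\rangle > nt} F_Q \times_Q H^0(G/P, nM_\lambda)[\mu]$ coming from Proposition \ref{thm_maxslop_schubert}, the summand for $\mu = nw\lambda$ (which pairs to $n\zeta_w > nt$) contains a Demazure/highest-weight vector that is nonvanishing on $QwP/P$, producing the desired section.

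The principal obstacle is this last construction: matching the slope stratification of $\pi_*(n\mathcal{L}_\lambda)$ (indexed by $\langle\deg(F_Q),\mu\rangle$) with the Bruhat stratification of the fibers (indexed by $W_Q\backslash W / W_P$) so that the selected weight vector, globalized over $C$ via $F_Q$, is a section of $n(\mathcal{L}_\lambda - tf) - A$ not vanishing at a prescribed $x \in \mathcal{C}_w$. I expect this to require an induction on Bruhat order combined with the Demazure character formula, showing that the $nw\lambda$-weight component of $E_n^{>t} \otimes \mathcal{O}_C(-A')$ is globally generated and its restriction to $\pi_*(n\mathcal{L}_\lambda|_{\mathcal{X}_w})$ meets the open cell $\mathcal{C}_w$ nontrivially.
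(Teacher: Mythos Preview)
You have misidentified the target. Theorem~\ref{thm_aug_bas_res_vol} is the general ELMNP characterization of augmented base loci via restricted volumes; the paper simply quotes it from \cite{ELMNP2009resvol} and gives no proof. Your proposal does not attempt to prove this statement at all---you invoke it as a black box in the first sentence. What you are actually sketching is a proof of Theorem~\ref{prop_aug_bas}, the computation of $\mathrm{B}_+(\mathcal L_\lambda-tf)$ on the flag bundle.

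Read as a proof of Theorem~\ref{prop_aug_bas}, your strategy is close to the paper's: both directions use the Harder--Narasimhan data of Proposition~\ref{thm_maxslop_schubert}, the inclusion $\mathcal X_-\subseteq \mathrm{B}_+$ goes through vanishing of (restricted) volume, and the opposite inclusion is obtained by producing sections from the sub-bundle of $\pi_*(n\mathcal L_\lambda)$ built out of weight pieces with $\langle\deg(F_Q),\mu\rangle>nt$. The paper formalizes the last step via Lemma~\ref{lemma_schubert_base_loci}, which identifies the fiberwise base locus of this sub-bundle with exactly the complement of $\coprod_{\zeta_w>t}\mathcal C_{w,p}$; this replaces your planned Demazure/Bruhat induction and is the cleaner route.

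There is, however, a genuine error in your reverse inclusion. You take $A=\pi^*A'$ with $A'$ ample on $C$ and call $A$ ample on $\mathcal X$. It is not: $\pi$ has positive-dimensional fibers, so $\pi^*A'$ is trivial along each fiber and only nef. With a non-ample $A$ the definition $\mathrm{B}_+(L)=\bigcap_n \mathrm{B}(nL-A)$ fails, and your construction collapses. The paper avoids this by choosing $A$ of the form $\mathcal L_{\lambda'}-t'f$ with $\lambda'$ strictly anti-dominant and $t'$ sufficiently negative, which is genuinely ample on $F/P$; the parameters $(\lambda',t')$ are then absorbed into the weight/slope bookkeeping. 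You would need to make the same move.
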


Let $N^1(Y)$ be the Néron-Severi group of $Y$. It is proved in \cite{ELMNP2006} that augmented base loci are well defined on $N^1(Y)_\R:=N^1(Y)\ot_\Z\R$.

Let $\mc Y \longrightarrow C$ be a projective flat family over the curve $C$, and $\shfL$ be a line bundle over $\mc Y$. For any scheme theoretical point $p\in C$, we denote by $\kappa(p)$ the residue field of $p$. Let $\mc Z$ be a closed subvariety of $\mc Y$, we denote by $\mc Z_p$ the fiber $\mc Z\times_C \mr{Spec}(\kappa(p))$ of $\mc Z$ over $p$. We denote by $\shfL_p$ the restriction $\shfL|_{\mc Y_p}$ of $\shfL$ to $\mc Y_p$.

\begin{lem} $\mathrm{B}(\shfL)_p=\bigcap\nolimits_{n \geq 1} \mr{Bs}\left( \mr{Im}\Big(H^0(\mathcal{Y},n\shfL)\ot_k \kappa(p)\rightarrow H^0(\mc Y_p,n\shfL_p) \Big) \right)$.
\end{lem}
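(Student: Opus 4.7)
The plan is to reduce the equality to two simple ingredients and then combine them. Set $Z_n := \mr{Bs}(H^0(\mc Y,n\shfL))\subseteq \mc Y$ and let $V_n$ denote the image $\mr{Im}(H^0(\mc Y, n\shfL)\ot_k \kappa(p) \to H^0(\mc Y_p, n\shfL_p))$. I claim (a) $Z_n\cap \mc Y_p = \mr{Bs}(V_n)$ as subsets of $\mc Y_p$ for every $n\geq 1$, and (b) set-theoretic intersection commutes with restriction to the closed subscheme $\mc Y_p$, i.e.\ $(\bigcap_n Z_n)\cap \mc Y_p = \bigcap_n(Z_n\cap \mc Y_p)$, which is trivial.

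For the key claim (a), the crucial observation is that a global section $s\in H^0(\mc Y,n\shfL)$ vanishes at a point $y\in \mc Y_p$ if and only if its restriction $s|_{\mc Y_p}\in H^0(\mc Y_p,n\shfL_p)$ vanishes at $y$: vanishing at $y$ depends only on the image of $s$ in the fiber $(n\shfL)_y \ot_{\mathcal{O}_{\mc Y,y}} \kappa(y)$, and this image factors through the restriction of $s$ to $\mc Y_p$. Consequently, $y\in Z_n\cap \mc Y_p$ iff every element of the image of $H^0(\mc Y,n\shfL)\to H^0(\mc Y_p,n\shfL_p)$ vanishes at $y$, iff $y\in \mr{Bs}(V_n)$. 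Note that extending scalars via $\ot_k \kappa(p)$ does not affect the base locus, since the common zero set of a $k$-subspace of sections coincides with that of its $\kappa(p)$-linear span.

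Combining (a) and (b) yields
\[ \mr B(\shfL)_p = \Big(\bigcap_{n\geq 1} Z_n\Big)\cap \mc Y_p = \bigcap_{n\geq 1}(Z_n\cap \mc Y_p) = \bigcap_{n\geq 1}\mr{Bs}(V_n), \]
as desired. Since $\mc Y$ and $\mc Y_p$ are Noetherian, both descending intersections actually stabilize along the cofinal subsequence $\{n!\}$, so no convergence issue arises from the infinite intersection. The statement is essentially a bookkeeping lemma and there is no substantial obstacle; the one point deserving mild care is keeping track of the scalar extension by $\kappa(p)$ in the definition of $V_n$, but as noted this does not alter the base locus, so it poses no real difficulty.
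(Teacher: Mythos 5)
Your proof is correct; the paper in fact states this lemma without proof, and your argument is exactly the standard bookkeeping one would supply. The two ingredients are right: vanishing of a global section at a point $y\in\mc Y_p$ is detected in the fiber $n\shfL\otimes\kappa(y)$, hence by the restriction to $\mc Y_p$, and passing to the $\kappa(p)$-linear span of the image does not change the common zero locus. One cosmetic remark: when $p$ is the generic point of $C$, the fiber $\mc Y_p$ is not a closed subscheme of $\mc Y$, so your phrase ``restriction to the closed subscheme $\mc Y_p$'' is imprecise there; but the set-theoretic identity $(\bigcap_n Z_n)\cap \mc Y_p=\bigcap_n(Z_n\cap\mc Y_p)$ and the factorization of evaluation through $H^0(\mc Y_p,n\shfL_p)$ hold for any scheme-theoretic point of $C$, so the argument is unaffected.
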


\subsection{Proof of Theorem \ref{prop_aug_bas}}
Let $M_\lambda = G \times_P k_\lambda$, which is an ample line bundle on $G/P$. Recall that we have the decomposition
\begin{flalign*}
     \quad \quad \quad &
H^0(G/P,M_\lambda)=\bigoplus_{\mu\in X(T)}H^0(G/P,M_\lambda)[\mu]
     &&
 \end{flalign*}
 and the filtration
 \begin{flalign*}
     \quad \quad \quad &
    H^0(G/P,M_\lambda)_{t,\deg(F_Q)} = \bigoplus_{\langle \deg(F_Q),\mu \rangle \geq t} H^0(G/P,M_\lambda)[\mu].
     &&
 \end{flalign*}
 
 We start with a lemma describing the vanishing behaviour of sections in $H^0(G/P,M_\lambda)[\mu]$.
\begin{lem}\label{Qu}
  Take $w \in W/W_P$. Then 
    \begin{enumerate}
        \item $s\neq0 \in H^0(G/P,M_\lambda)[w\lambda]$ $\Longrightarrow$ $s(x) \neq 0$ for any $x \in BwP/P$.
        \item $s \in H^0(G/P,M_\lambda)[\mu]$ for $\mu \not\leq w\lambda$ $\Longrightarrow$ $s(x)=0$ for any $x \in \overline{BwP}/P$.
    \end{enumerate}
\end{lem}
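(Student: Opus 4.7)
My plan is to handle the two parts of Lemma \ref{Qu} separately. Both rely on the observation that $wP/P$ is a $T$-fixed point and that the fiber $M_\lambda|_{wP/P}$ carries $T$-weight $w\lambda$, which is a standard computation from $M_\lambda = G \times_P k_\lambda$ yielding $t \cdot [w,v] = [tw, v] = [w, w^{-1}tw \cdot v] = (w\lambda)(t)\,[w,v]$.

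For part (1), I would first show $s(wP/P) \neq 0$. The $T$-equivariant evaluation map $H^0(G/P, M_\lambda) \twoheadrightarrow M_\lambda|_{wP/P}$ is surjective (by Borel--Weil, or global generation of the ample $M_\lambda$) onto a one-dimensional target of weight $w\lambda$, so its restriction to the weight-$w\lambda$ summand of the source is already surjective. Since $\lambda$ is strictly antidominant, the orbit $W\lambda$ is identified with $W/W_P$, and $w\lambda$ is an extremal weight of the simple $G$-module $H^0(G/P, M_\lambda)$, hence the weight space $H^0(G/P, M_\lambda)[w\lambda]$ is one-dimensional. So every nonzero $s$ in it satisfies $s(wP/P) \neq 0$. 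To extend this to the entire cell, note that $BwP/P$ is an affine space on which $T$ acts linearly with weights lying in a common open half-space of $X(T)_\R$. Pick a strictly dominant one-parameter subgroup $\gamma: \mathbb{G}_m \to T$, which then contracts $BwP/P$ to $wP/P$ as $t \to 0$. If some $x \in BwP/P$ lay in the closed $T$-invariant vanishing locus $V(s)$, then $wP/P = \lim_{t \to 0} \gamma(t) \cdot x$ would also lie in $V(s)$, contradicting $s(wP/P) \neq 0$.

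For part (2), the restriction $s|_{\overline{BwP}/P}$ is still a section of weight $\mu$, so it suffices to show that $H^0(\overline{BwP}/P, M_\lambda)[\mu] = 0$ whenever $\mu \not\leq w\lambda$. By Demazure's theorem \cite[II, \textsection 14]{jantzen_representations_2007}, the restriction map is surjective and its image is isomorphic to the dual of the Demazure module $V_w(\lambda)$, i.e., the $B$-submodule of $H^0(G/P, M_\lambda)^*$ generated by a weight-$w\lambda$ vector. Since the unipotent radical $U \subseteq B$ only adds non-negative integer combinations of positive roots to the weight of a weight vector, every weight $\nu$ of $V_w(\lambda)$ satisfies $\nu - w\lambda \in \mathbb{Z}_{\geq 0}[\Delta]$. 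Dualizing reverses the inequality, so every weight $\mu$ of $H^0(\overline{BwP}/P, M_\lambda)$ satisfies $\mu \leq w\lambda$, and the contrapositive yields the desired vanishing.

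The principal technical obstacle is the representation-theoretic input in part (2), namely the identification of $H^0(\overline{BwP}/P, M_\lambda)$ with the dual Demazure module and the description of its weights in terms of the $B$-action; part (1) is essentially a clean application of $T$-equivariance and relatively ample positivity. A secondary concern is to confirm that the partial order on weights in the lemma's statement coincides with the one arising from the Demazure-module analysis, i.e., $w\lambda - \mu \in \mathbb{Z}_{\geq 0}[\Delta]$.
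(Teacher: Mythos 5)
Your proof is correct. Part (2) is essentially the paper's argument: restrict to $\overline{BwP}/P$, identify the image with the dual Demazure module, and use that all its weights are $\leq w\lambda$ (the paper simply quotes this weight bound from the theory of Demazure modules, while you unwind it via $U(\mathfrak b)$-generation by an extremal weight vector; just be careful that the generating vector inside $H^0(G/P,M_\lambda)^*$ has weight $-w\lambda$, not $w\lambda$ --- a convention point you already flag). Part (1) takes a genuinely different route. The paper never evaluates at the $T$-fixed point: it notes that $\mathrm{Res}:H^0(G/P,M_\lambda)\to H^0(\overline{BwP}/P,M_\lambda)$ is $B$-equivariant with both $w\lambda$-weight spaces one-dimensional, so $\mathrm{Res}(s)\neq 0$; closedness of the vanishing locus then gives a nonvanishing point somewhere in the dense open cell, and $B$-homogeneity of $BwP/P$ (with $B$ acting on $\mathrm{Res}(s)$ through the character $w\lambda$) spreads nonvanishing over the whole cell. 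You instead pin down the value at the specific point $wP/P$ via the $T$-weight of the fiber and global generation, and then spread nonvanishing outward by a $\mathbb G_m$-contraction of the affine cell onto its fixed point, using that $V(s)$ is closed and $T$-stable. Both are sound; the paper's version is shorter because $B$-orbit homogeneity does the propagation in one line, while yours buys a more explicit statement (nonvanishing exactly at the $T$-fixed point) and avoids having to know that $H^0(\overline{BwP}/P,M_\lambda)[w\lambda]$ is one-dimensional, at the cost of the Bia{\l}ynicki-Birula--type limit argument.
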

\begin{proof}Consider the restriction map 
\begin{flalign*}
\quad\quad\quad & \mathrm{Res}:\ H^0(G/P,M_\lambda)\longrightarrow H^0(\overline{BwP}/P, M_\lambda),&&
\end{flalign*}
which is $B$-equivariant and surjective. 
Since $H^0(G/P,M_\lambda)[w\lambda]$ and $H^0(\overline{BwP}/P, M_\lambda)[w\lambda]$ are both one-dimensional, one has $\mathrm{Res}(s)\neq0$ for any  $s\neq0 \in H^0(G/P,M_\lambda)[w\lambda]$.

By the closedness of the vanishing locus of $\mathrm{Res}(s)$, there exists $x\in BwP/P$ such that $\mathrm{Res}(s)(x)\neq0$. Note that by the highest weight theory, $B$ acts on $\mathrm{Res}(s)$ via $w\lambda$ and $BwP/P$ is a single $B$-orbit under translation. Consequently, $\mathrm{Res}(s)(x)\neq0$ for any $x\in BwP/P$.
 
As for $(2)$, for any $\mu \not\leq w\lambda$,  $H^0(\overline{BwP}/P, M_\lambda)[\mu]=0$. Consequently, $\mathrm{Res}(s)=0$ for any  $s \in H^0(G/P,M_\lambda)[\mu]$.
\end{proof}

Let $p$ be a scheme theoretical point of $C$ and consider the filtration $H^0(\scrX_p,\shfL_{\lambda,p})_{\bullet,\deg(F_Q)}$ of $H^0(\scrX_p,\shfL_{\lambda,p})$ given by \begin{flalign*}
\quad\quad\quad & H^0(\scrX_p,\shfL_{\lambda,p})_{t,\deg(F_Q)}=F_{Q,\kappa(p)} \times_Q H^0(G/P,M_\lambda)_{t,\deg(F_Q)}.
&& \end{flalign*} We have
\begin{lem}\label{lemma_schubert_base_loci}
$\operatorname{Bs} \Big( H^0(\scrX_p,\shfL_{\lambda,p})_{t,\deg(F_Q)} \Big) = \scrX_p \backslash \coprod_{\langle \deg(F_Q),w\lambda \rangle \geq t} \mc C_{w,p}$.
\end{lem}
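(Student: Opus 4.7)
The plan is to first trivialize the $Q$-bundle $F_{Q,\kappa(p)}$ to reduce the statement to a purely representation-theoretic claim on $G/P$, then prove each containment by combining Lemma \ref{Qu} with Proposition \ref{thm_maxslop_schubert}. Since $\kappa(p)$ is either $k$ or $k(C)$ with $k$ algebraically closed of characteristic zero, it is a $C_1$-field, so any $Q$-bundle over $\operatorname{Spec}\kappa(p)$ is trivializable. Fix a trivialization $\tau_p : F_{Q,\kappa(p)}\isomap Q_{\kappa(p)}$; it induces $\scrX_p\isomap (G/P)_{\kappa(p)}$, identifies $\mc C_{w,p}$ with $(QwP/P)_{\kappa(p)}$, and identifies $H^0(\scrX_p,\shfL_{\lambda,p})_{t,\deg(F_Q)}$ with the subspace $V_t := \bigoplus_{\langle\deg(F_Q),\mu\rangle\geq t} H^0(G/P,M_\lambda)[\mu]\otimes_k\kappa(p)$ of $H^0((G/P)_{\kappa(p)},M_\lambda)$. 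It then suffices to prove $\mr{Bs}(V_t) = (G/P)_{\kappa(p)}\setminus\coprod_{\langle\deg(F_Q),w\lambda\rangle\geq t}(QwP/P)_{\kappa(p)}$.

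For the ``not in the base locus'' direction, I would take $x\in QwP/P$ with $\langle\deg(F_Q),w\lambda\rangle\geq t$, write $QwP/P=\bigcup_{w'\in W_Q w}Bw'P/P$, and pick $w'\in W_Q w$ with $x\in Bw'P/P$. Since $\langle\deg(F_Q),w'\lambda\rangle=\langle\deg(F_Q),w\lambda\rangle\geq t$ by $W_Q$-invariance, any non-zero $s\in H^0(G/P,M_\lambda)[w'\lambda]$ lies in $V_t$, and Lemma \ref{Qu}(1) yields $s(x)\neq 0$.

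For the reverse direction, I would fix $x\in Bw'P/P\subseteq\overline{Bw'P}/P\subseteq\overline{QwP}/P$ with $\langle\deg(F_Q),w\lambda\rangle<t$ and show every $s\in V_t$ vanishes at $x$. For a weight $\mu$ with $\langle\deg(F_Q),\mu\rangle\geq t$ and any $s\in H^0(G/P,M_\lambda)[\mu]$, I examine the restriction $s|_{\overline{Bw'P}/P}$. If this restriction vanishes, so does $s(x)$; otherwise $\mu$ appears as a weight of the Demazure module $H^0(\overline{Bw'P}/P,M_\lambda)$, hence of $H^0(\overline{QwP}/P,M_\lambda)$, and then Proposition \ref{thm_maxslop_schubert} gives $\langle\deg(F_Q),\mu\rangle\leq\langle\deg(F_Q),w\lambda\rangle<t$, contradicting $\mu\in V_t$. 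So only the vanishing case survives and $s(x)=0$.

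The main subtlety is that the Bruhat-type partial order $\mu\leq w'\lambda$ controlling Demazure vanishing (via Lemma \ref{Qu}(2)) is not a priori compatible with the slope order $\langle\deg(F_Q),\mu\rangle$ defining the filtration: a direct comparison would require the positivity $\langle\deg(F_Q),\alpha\rangle\geq 0$ for $\alpha\in\Delta\setminus\Delta_Q$, which is a delicate consequence of canonicity of $F_Q$. Invoking Proposition \ref{thm_maxslop_schubert} (Schieder's computation of the Harder--Narasimhan filtration of $F_Q\times_Q H^0(\overline{QwP}/P,M_\lambda)$) packages this positivity into the needed slope inequality and lets one sidestep the bare positivity argument entirely.
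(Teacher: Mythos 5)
Your proof is correct, and it diverges from the paper's in a meaningful way on the vanishing inclusion. The setup is essentially the same: the paper passes to a reduction of $F_{Q,\kappa(p)}$ to $T$ and uses triviality of that torsor, while you trivialize the $Q$-bundle directly via the $C_1$-property of $\kappa(p)$; both rest on the same fact and both then prove the ``not in the base locus'' half by evaluating a nonzero extreme-weight vector of weight $w'\lambda$ via Lemma \ref{Qu}(1). For the other half, the paper argues through the dominance order: for $s$ of weight $\mu$ with $\langle \deg(F_Q),\mu\rangle \geq t$ it deduces $\mu \not\leq w'\lambda$ and then applies Lemma \ref{Qu}(2); the parenthetical implication ``$\mu \leq w'\lambda \Rightarrow \langle \deg(F_Q),\mu\rangle \leq \langle \deg(F_Q),w\lambda\rangle$'' silently uses the positivity $\langle \deg(F_Q),\alpha\rangle \geq 0$ for simple roots $\alpha$, which comes from the canonicity of $F_Q$ and is left implicit. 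You instead note that if $s|_{\overline{Bw'P}/P} \neq 0$ then $\mu$ occurs as a weight of $H^0(\overline{QwP}/P,M_\lambda)$, and you read the bound $\langle \deg(F_Q),\mu\rangle \leq \langle \deg(F_Q),w\lambda\rangle$ off the maximal-slope statement of Proposition \ref{thm_maxslop_schubert} (the maximal slope of the slope filtration is exactly the maximum of $\langle \deg(F_Q),\mu\rangle$ over occurring weights, so this is a legitimate reading of that proposition). The two routes ultimately rest on the same positivity, but yours consumes only the stated conclusion of Proposition \ref{thm_maxslop_schubert} and so avoids the unstated order-compatibility step, at the cost of invoking surjectivity of the restriction to the Demazure module; your closing remark correctly identifies the point the paper glosses over. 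The only blemish is notational: the cells covering $QwP/P$ should be indexed by $\pi^{-1}(w) \subseteq W/W_P$ for the projection $\pi : W/W_P \longrightarrow W_Q\backslash W/W_P$, not by the coset $W_Qw$ itself.
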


\begin{proof}
Let $F_{T,\kappa(p)}$ be a reduction of $F_{Q,\kappa(p)}$ to $T$. For any $\mu\in X(T)$, consider the $\kappa(p)$-subspace $H^0(\scrX_p,\shfL_{\lambda,p})[\mu]=F_{T,\kappa(p)} \times_T H^0(G/P,M_\lambda)[\mu]$ of $H^0(\scrX_p,\shfL_{\lambda,p})=F_{T,\kappa(p)} \times_T H^0(G/P,M_\lambda)$. For $w^\prime\in W/W_P$, set 
$C_{w^\prime} = F_{T,\kappa(p)} \times_T Bw^\prime P/P$ and $X_{w^\prime} = F_{T,\kappa(p)} \times_T \overline{B w^\prime P}/P$. Then $\mc C_{w,p} = \coprod_{\pi(w^\prime)=w} C_{w^\prime}$ and $\scrX_{w,p} = \bigcup_{\pi(w^\prime)=w} X_{w^\prime}$ where
$\pi: W/W_P \longrightarrow W_Q \backslash W/W_P$ is the natural projection.  

Since the principal $T$-bundle $F_{T,\kappa(p)}$ is trivial, we deduce that $s(x) \neq 0$ for  any $x \in C_{w^\prime}$, any $w^\prime \in \pi^{-1}(w)$, and any $s \in H^0(\scrX_p,\shfL_{\lambda,p})[w^\prime \lambda]$ from Lemma \ref{Qu}(1). Thus if $\langle \deg(F_Q),w\lambda \rangle \geq t$, $H^0(\scrX_p,\shfL_{\lambda,p})[w^\prime \lambda] \subseteq H^0(\scrX_p,\shfL_{\lambda,p})_{t,\deg(F_Q)}$ for any $w^\prime \in \pi^{-1}(w)$ and consequently, $x \not\in \operatorname{Bs} ( H^0(\scrX_p,\shfL_{\lambda,p})_{\geq t})$ for any $x \in C_{w^\prime}$. This proves $\operatorname{Bs} ( H^0(\scrX_p,\shfL_{\lambda,p})_{t,\deg(F_Q)} ) \subseteq \scrX_p \backslash \coprod_{\langle \deg(F_Q),w\lambda \rangle \geq t} \mc C_{w,p}$.

    On the other hand, we claim that if $\langle \deg(F_Q),w\lambda \rangle < t$, then for any $s \in H^0(\scrX_p,\shfL_{\lambda,p})_{t, \deg(F_Q)}$, the restriction of $s$ to $X_w$ is zero. In fact, we may assume $s \in H^0(\scrX_p,\shfL_{\lambda,p})[\mu]$ for some $\mu$ and   $\langle \deg(F_Q),\mu \rangle \geq t$ implies that $\mu \not\leq w^\prime \lambda$ for any $w^\prime \in \pi^{-1}(w)$ (because if $\mu \leq w^\prime \lambda$ for some $w^\prime$, then $\langle \deg(F_Q),\mu \rangle \leq \langle \deg(F_Q),w\lambda \rangle<t$). By Lemma \ref{Qu}(2), the restriction of $s$ to $X_{w^\prime}$ is zero for each $w^\prime$. We win.
\end{proof}

\begin{customthm}{\ref{prop_aug_bas}}
	Let $B_+(\shfL_\lambda-tf)$ be the augmented base locus of $\shfL_\lambda-tf$. Then
\begin{flalign*}
    \quad \quad \quad &
\mr B_+(\shfL_\lambda-tf)=\scrX \Bigg \backslash \coprod_{\langle \deg(F_Q),w\lambda \rangle > t} \mc C_w = \coprod_{\langle \deg(F_Q),w\lambda \rangle \leq t} \mc C_w.
    &&
\end{flalign*}
\end{customthm}

\begin{proof}
    We may assume that $t\in \N$ by taking a sufficiently large tensor power of $\shfL_\lambda-tf$.
    Let $t_0=\min\{\langle \deg(F_Q),\mu\rangle: H^0(G/P,M_\lambda)[\mu]\not=0\text{ and }\langle \deg(F_Q),\mu\rangle>t\}$.
    Take $E=F_Q\times_Q H^0(G/P, M_\lambda)_{t_0,\deg(F_Q)}\ot \O_C(-tx)$, which is a subbundle of $\pi_*(\shfL_\lambda-tf).$ Moreover, $\mu_{\min}(E)=t_0-t>0$, i.e. $E$ is ample.

    For each scheme-theoretical point $p\in C$, we have $E(p):=E\ot_C \kappa(p)=F_{Q,\kappa(p)}\times H^0(G/P,M_\lambda)_{t_0,\deg(F_Q)}.$
    The ampleness of $E$ gives that
    \begin{flalign*}
        \quad \quad \quad &
        H^0(C,\mr{Sym}^m E)\ot_k \kappa(p)\longrightarrow \mr{Sym}^m (E(p))
        &&
    \end{flalign*}
    is surjective for every $m\gg0$. Since $\pi_*(\shfL)\ot_C \kappa(p)=H^0(\scrX_p,\shfL_{\lambda,p})$, we have the commutative diagram
    \begin{flalign*}
    \xymatrix@C-=0.5cm{ H^0(C,\mr{Sym}^m E)\ot_k \kappa(p)  \ar[d] \ar[r] & \mr{Sym}^m(E(p)) \ar[r]&\mr{Sym}^m H^0(\scrX_p,\shfL_{\lambda,p}) \ar[d] \\
    H^0(C,\mr{Sym}^m\pi_*(\shfL_\lambda-tf))\ot_k \kappa(p)\ar[r] &H^0(C,\pi_*(m\shfL_{\lambda}-tmf))\ot_k \kappa(p) \ar[r] & H^0(\scrX_p,m\shfL_{\lambda,p})}
    \end{flalign*}
    which implies that $V:=\mr{Im}(\mr{Sym}^m E(p)\rightarrow H^0(\scrX_p,m\shfL_{\lambda,p}))$ is contained in the image $\mr{Im}(H^0(C,\pi_*(m\shfL_{\lambda}-tmf))\ot_k \kappa(p) \rightarrow H^0(\scrX_p,m\shfL_{\lambda,p}))=\mr{Im}(H^0(\scrX,m\shfL_{\lambda}-tmf)\ot_k \kappa(p) \rightarrow H^0(\scrX,m\shfL_{\lambda,p})).$
    Hence
    \begin{flalign*}
        \quad \quad \quad &
        \mr B(\shfL_\lambda-tf)_p \subseteq \mr{Bs}(V)=\mr{Bs}(E(p))= \scrX_p \backslash \coprod\nolimits_{\langle \deg(F_Q),w\lambda \rangle > t} \mc C_{w,p}
        &&
    \end{flalign*}
    by applying Lemma \ref{lemma_schubert_base_loci} to $H^0(\scrX_p,\shfL_{\lambda,p})_{t_0,\deg(F_Q)}$. Consequently,
    \begin{flalign} \label{eq_base_loc}
        \quad &
        \mr B(\shfL_\lambda-tf) \subseteq \scrX \backslash \coprod\nolimits_{\langle \deg(F_Q),w\lambda \rangle > t} \mc C_{w}.
        &&
    \end{flalign}
    
    Now we take an ample line bundle $\mc A$ over $\scrX$ of form $\shfL_{\lambda'}-t'f$ where $\lambda'\in X(T)$ is a strictly anti-dominant character and $\langle \deg(F_Q),\mu\rangle-t'>0$ for any $\mu\in X(T)$ such that $H^0(G/P,M_{\lambda'})[\mu]\not=0.$ 
    
    Take sufficiently large $n$ such that $\langle \deg (F_Q),w\lambda'\rangle-t'< n(t_0-t)$ for any $w\in W_Q\backslash W/W_P$. Thus $\langle \deg(F_Q),w(n\lambda-\lambda')\rangle>nt-t'$ if and only if $\langle \deg(F_Q),w\lambda\rangle\geq t_0.$ Similarly as in \eqref{eq_base_loc}, we obtain that 
    \begin{flalign*}
        \quad \quad \quad &
\mr B_+(\shfL_\lambda-tf)\subset\mr B(n(\shfL_\lambda-tf)-\shfL_{\lambda'}-t'f)\subseteq \scrX \Bigg\backslash \coprod_{\langle \deg(F_Q),w\lambda \rangle > t} \mc C_w.
        &&
    \end{flalign*}
    On the other hand, for any $w\in W_Q\backslash W/W_P$ such that $\langle \deg(F_Q),w\lambda\rangle\leq t$, since \begin{flalign*}
        \limsup_{m\rightarrow+\infty}\frac{\mu_{\max}(\pi_*(m(\shfL_{\lambda}-tf)|_{\scrX_w})}{n}=\zeta_1(\shfL_{\lambda}-tf|_{\scrX_w})=\zeta_w-t\leq 0,
    \end{flalign*}
    we have $\operatorname{Vol}(\shfL_{\lambda}-tf|_{\scrX_w})=0$ by a comparison result \cite[Theorem 2.4]{chen_maj} deduced from the Riemann-Roch theorem.
    Therefore $\operatorname{Vol}(\scrX|\scrX_w,\shfL_\lambda-tf)=0$, which implies that $\scrX_w\subseteq B_+(\shfL_\lambda-tf)$ due to Theorem \ref{thm_aug_bas_res_vol}. In conclusion, $ \mr B_+(\shfL_\lambda-tf)=\scrX\backslash \coprod_{\langle \deg(F_Q),w\lambda \rangle > t} \mc C_w.$
\end{proof}

\subsection{Movable cones and proof of Theorem \ref{mov cone of flag bundle}}
\label{subsection_ movable cones}
Let $Y$ be a projective smooth variety over $\mathbf{k}$. Recall 
\begin{itemize}
    \item The \textit{big cone} $\mathrm{Big}(Y)\subseteq N^1(Y)_\R$ is the cone of numerical classes of big divisors. Its closure is called \emph{Pseudo-effective cone}, denoted as $\mathrm{Psef}(Y)$.
    \item The \textit{ample cone} $\mathrm{Amp}(Y)\subseteq N^1(Y)_\R$ is the cone of numerical classes of ample divisors. Its closure is called \textit{nef cone}, denoted as $\mathrm{Nef}(Y)$.
    \item For each $k=1,\cdots, \dim Y$, the $k$-th \textit{movable cone} is defined as $\operatorname{Mov}^k(Y)= \big\{D\in N^1(Y)_{\mathbb{R}}:\mathrm{codim}(\mr B_+(D))\geq k\big\}$ which is a open in $N^1(Y)_\mathbb{R}$.
\end{itemize}

We have obvious inclusions $\operatorname{Mov}^d(Y) \subseteq \cdots \subseteq \operatorname{Mov}^1(Y)$. Note that $\operatorname{Mov}^1(Y)$ is just the big cone and $\operatorname{Mov}^d(Y)$ is just the ample cone.

Now we go back to the setting of \textsection \ref{section_ ht fil and succ min}. Denote by \begin{flalign*}
\quad\quad\quad &c:X(P)\longrightarrow \mathrm{Pic}(G/P),\quad \lambda\longmapsto M_\lambda& 
\end{flalign*}the character map. 
We have an exact sequence \cite[Theorem 18.32]{milne2017alggrp}:
\begin{flalign*}
\quad\quad\quad &0\longrightarrow X(G)\longrightarrow X(P)\longrightarrow \mathrm{Pic}(G/P)\longrightarrow \mathrm{Pic}(G)\longrightarrow \mr{Pic}(P)\longrightarrow 0.&
\end{flalign*}
Since $G$ and $P$ are connected linear algebraic groups over a characteristic zero field, both $\mathrm{Pic}(G)$ and $\operatorname{Pic}(P)$ are finite. Notice that the numerical equivalence and the linear equivalence coincide on $G/P$ since it is smooth and Fano. Therefore the linear map $X(P)_\R\rightarrow N^1(G/P)_\R$ is surjective. It is well-known that $M_\lambda$ is ample $\Longleftrightarrow$ $M_\lambda$ is big $\Longleftrightarrow$ $\lambda$ is strictly anti-dominant, thus
\begin{flalign*}
    \quad \quad \quad &
    \mathrm{Amp}(G/P)=\mathrm{Big}(G/P)=\big\{c(\lambda): \langle\alpha^\vee,\lambda\rangle<0 \text{ for all $\alpha\in \Delta \backslash \Delta_P$}\big\}
    && 
\end{flalign*}
where the pairing $\langle\cdot,\cdot\rangle$ is extended from $X(P)$ to $X(P)_\R$ by linearity. This implies $\mathrm{Mov}^i(G/P)=\mathrm{Amp}(G/P)$ for any $i=1,\cdots, \dim (G/P)$, which is the image of the strictly anti-dominant cone in $X(P)$ under $c: X(P)_\mathbb{R} \longrightarrow N^1(G/P)_\mathbb{R}$.

Then we compute $\mathrm{Mov}^i(F/P)$.
\begin{lem}
    Let $f \in N^1(F/P)$ be the class of a vertical fiber. Then every element in $N^1(F/P)$ can be writen as $\mathcal{L}_\lambda-tf$.
\end{lem}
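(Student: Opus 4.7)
The plan is to lift the description of $N^1(G/P)_\R$ obtained just above the lemma to the flag bundle $F/P$ via the projection $\pi : F/P \longrightarrow C$, using that a class on $F/P$ which restricts trivially on the generic fiber is pulled back from $C$.

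First I would record the relevant short exact sequence. Consider the restriction map
\begin{flalign*}
\quad\quad\quad & r : N^1(F/P)_\R \longrightarrow N^1(G/P)_\R
\end{flalign*}
to a geometric fiber. For any $\lambda \in X(P)$ the class $\shfL_\lambda = F \times_P k_\lambda$ restricts on each fiber to $M_\lambda = G \times_P k_\lambda$, so composing with $c$ shows that $r$ is surjective (since $c$ is surjective by the discussion preceding the lemma). On the other hand, because $G/P$ is smooth, rational, and Fano, $\mathrm{Pic}(G/P)$ is a finitely generated abelian group and $\mathrm{Pic}^0$ of each fiber is trivial; hence any class in $N^1(F/P)_\R$ whose restriction to the generic fiber is trivial comes from $\pi^* N^1(C)_\R$. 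Together with the fact that $\pi^*$ is injective on $N^1_\R$ (for instance by intersecting with a horizontal curve of the form $\sigma_x(C)$ and using $\pi \circ \sigma_x = \mathrm{id}$), we obtain the short exact sequence
\begin{flalign*}
\quad\quad\quad & 0 \longrightarrow \pi^* N^1(C)_\R \longrightarrow N^1(F/P)_\R \xrightarrow{\;r\;} N^1(G/P)_\R \longrightarrow 0.
\end{flalign*}

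The lemma follows by tracing an arbitrary class through this sequence. Given $D \in N^1(F/P)_\R$, by surjectivity of $c$ we may choose $\lambda \in X(P)_\R$ with $c(\lambda) = r(D)$; then $D - \shfL_\lambda \in \ker r = \pi^* N^1(C)_\R$. Since $C$ is a smooth projective curve, $N^1(C)_\R = \R \cdot [p_0]$, so $D - \shfL_\lambda = -t \cdot \pi^* \O_C(p_0) = -tf$ for some $t \in \R$, giving the asserted expression.

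The main obstacle is really the verification of the exact sequence, and in particular the identification of $\ker r$ with $\pi^* N^1(C)_\R$; this is where one genuinely uses that the fibers are Fano with torsion-free, discrete Picard group, so that a flat family of trivial numerical classes on the fibers extends globally into a class pulled back from the base. All other steps are formal manipulations with the exact sequence and the surjectivity of $c$ already established.
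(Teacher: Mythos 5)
Your argument is correct, but it takes a different route from the paper's. You work with the fibration $\pi : F/P \to C$ directly: restrict to a fiber, show the restriction map $N^1(F/P)_\R \to N^1(G/P)_\R$ is surjective with kernel $\pi^* N^1(C)_\R = \R f$, and chase. The paper instead applies the Picard-group exact sequence for torsors (\cite[Theorem 18.32]{milne2017alggrp}) twice --- once to the $P$-torsor $F \to F/P$ and once to the $G$-torsor $F \to C$ --- and uses the finiteness of $\operatorname{Pic}(G)$ and $\operatorname{Pic}(P)$ to get $\operatorname{Pic}(F)_\R \simeq \operatorname{Pic}(C)_\R$ and exactness of $X(P)_\R \to \operatorname{Pic}(F/P)_\R \to \operatorname{Pic}(F)_\R$, then does a small diagram chase. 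The paper's route has the advantage that everything is formal once Milne's sequence is quoted; your route requires the extra (standard, but not free) input that a class numerically trivial on the fibers of $F/P \to C$ is pulled back from $C$ --- for which you need that numerical and linear equivalence agree on $G/P$, that $\operatorname{Pic}(G/P)$ is torsion-free (or a passage to $\Q$-coefficients), and a seesaw/cohomology-and-base-change argument using $H^1(G/P,\mathcal{O})=0$; you correctly flag this as the main obstacle, and it is completable. Note also that your surjectivity of $r$ still leans on the surjectivity of $c : X(P)_\R \to N^1(G/P)_\R$ established just before the lemma, so the two proofs share that ingredient.
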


\begin{proof}
Apply \cite[Theorem 18.32]{milne2017alggrp} to the $G$-bundle $F \longrightarrow C$. We get an exact sequence
\begin{flalign*}
    \quad \quad \quad &
    X(P) \longrightarrow \operatorname{Pic}(F/P) \longrightarrow \operatorname{Pic}(F) \longrightarrow \operatorname{Pic}(P).
    &&
\end{flalign*}
Apply it to the $P$-bundle $F \longrightarrow F/P$. We get another exact sequence
\begin{flalign*}
    \quad \quad \quad &
    X(G) \longrightarrow \operatorname{Pic}(C) \longrightarrow \operatorname{Pic}(F) \longrightarrow \operatorname{Pic}(G).
    &&
\end{flalign*} Note that after tensoring $\mathbb{R}$, $\operatorname{Pic}(P)_\mathbb{R}=\operatorname{Pic}(G)_\mathbb{R}=0$.

    Consider the diagram
    \begin{flalign*}
    \quad \quad \quad &
        \xymatrix{ 
        X(P)_\mathbb{R} \ar[r]^f & \operatorname{Pic}(F/P)_\mathbb{R} \ar[r]^g & \operatorname{Pic}(F)_\mathbb{R} \\
        & \operatorname{Pic}(C)_\mathbb{R} \ar[u]^\phi \ar[ru]^\psi
        } &&
    \end{flalign*} where $g$, $\phi$ and $\psi$ are just pullbacks. Note that $g$ and $\psi$ are surjective. Let $x \in \operatorname{Pic}(F/P)$ such that $g(x)=\psi(a)$ for some $a \in \operatorname{Pic}(C)_\mathbb{R}$. Then $g(x-\phi(a))=0$ and the first line is exact at the middle, so $x-\phi(a)=f(b)$.
\end{proof}

Thanks to the above description, we may define two functions $\langle \alpha^\vee,\cdot \rangle$ and $\langle \deg(F_Q),w\cdot \rangle$ on $N^1(F/P)_\mathbb{R}$ as
\begin{enumerate}
    \item $\langle \alpha^\vee,\cdot \rangle$ sends $\mathcal{L}_\lambda-tf$ to $\langle \alpha^\vee,\lambda \rangle$.
    \item $\langle \deg(F_Q),w\cdot \rangle$ sends $\mathcal{L}_\lambda-tf$ to $\langle \deg(F_Q),w\lambda \rangle-t$.
\end{enumerate}
 To see they are well-defined, it suffices to deal with the case $\mathcal{L}_\lambda$ equals to a multiple of $f$.  In this case, we see that $M_\lambda$ is a trivial line bundle on $G/P$ by restricting $\mathcal{L}_\lambda$ to a fiber. This implies $\lambda \in X(G)$ by the exact sequence $0 \longrightarrow X(G) \longrightarrow X(P) \longrightarrow \operatorname{Pic}(G/P)$. Then $\mathcal{L}_\lambda= F\times_P k_\lambda = \pi^*(F\times_G k_\lambda)$ where $\pi$ is the structure map $F/P \longrightarrow C$. In particular, $\mathcal{L}_\lambda = \langle\deg(F),\lambda \rangle f$ in $N^1(F/P)_{\mathbb{R}}$.
\begin{enumerate}
    \item $\langle \alpha^\vee,\cdot \rangle$ sends $f$ to zero and sends $\lambda \in X(G)$ also to zero. This proves $\langle \alpha^\vee,\cdot \rangle$ is well-defined.
    \item $\langle \deg(F_Q),w\cdot \rangle$ sends $\langle\deg(F),\lambda \rangle f$ to $\langle\deg(F),\lambda \rangle$ since $\lambda \in X(G)$ is invariant under Weyl group action  and sends $\lambda \in X(G)$ also to $\langle \deg(F_Q),w\lambda \rangle = \langle \deg(F_Q),\lambda \rangle = \langle\deg(F),\lambda \rangle$ since $F \times_G k_\lambda = F_Q \times_Q G \times_G k_\lambda = F_Q \times_Q k_\lambda$. This proves $\langle \deg(F_Q),w\cdot \rangle$ is well-defined.
\end{enumerate} 

\begin{customthm}{\ref{mov cone of flag bundle}}
	The $k$-th movable cone $\operatorname{Mov}^k(F/P)$ is the cone defined by 
    \begin{enumerate}
        \item $\langle \alpha^\vee, \cdot \rangle <0$ for any $\alpha \in \Delta \backslash \Delta_P$, and
        \item $\langle \deg(F_Q),w \cdot \rangle>0$ for any $w \in W/W_P$ with $\ell(w) \geq n-k+1$.
    \end{enumerate}
\end{customthm}

\begin{proof}
    The sufficiency follows immediately after Theorem \ref{prop_aug_bas}. For the necessity, let $\shfL_\lambda-tf$ be a line bundle on $F/P$ such that $\mr{codim}(\mr B_+(\shfL_\lambda-tf))\geq k$. Then in particular $\shfL_\lambda-tf$ is big. This impies $M_\lambda$ is big on $G/P$, which is equivalent to $\langle \alpha^\vee, \lambda \rangle<0$ for all $\alpha \in \Delta \backslash \Delta_P$.
    Then $\lambda$ is strictly anti-dominant and Theorem \ref{prop_aug_bas} applies, saying
    $\mr B_+(\shfL_\lambda-tf)=\scrX \backslash \coprod_{\langle \deg(F_Q),w\lambda \rangle -t > 0} \mc C_w$. It has $\operatorname{codim} \geq k$ $\Longleftrightarrow$ $\langle \deg(F_Q),w\lambda \rangle -t >0$ for all $w$ with $\ell(w) \geq n-k+1$.
\end{proof}


\bibliography{mybibliography}
\bibliographystyle{plain}

\end{document}